\newcommand{\C}{{\mathcal C}}
\renewcommand{\H}{{\mathcal H}}
\newcounter{ChartNo}
\newcounter{FigureNo}
\newcommand{\UZ}{\operatorname{UZ}}
\newcommand{\ignore}[1]{}
\newcommand{\real}{{\mathbb R}}
\newcommand{\R}{{\real}}
\newcommand{\rational}{{\mathbb Q}}
\newcommand{\Q}{{\rational}}
\newcommand{\N}{{\mathbb N}}
\newcommand{\Z}{{\mathbb Z}}
\newcommand{\la}{{\langle}}
\newcommand{\ra}{{\rangle}}
\newcommand{\A}{{\mathcal A}}
\newcommand{\D}{{\mathcal D}}
\newcommand{\F}{{\mathcal F}}
\newcommand{\G}{{\mathcal G}}
\newcommand{\J}{{\mathcal J}}
\newcommand{\e}{{\varepsilon}}
\newcommand{\charf}[1]{\mbox{\raise.48ex\hbox{$\chi$}$_{#1}$}}
\def\cl{{\rm cl}}
\def\diam{{\rm diam}}
\def\dist{{\rm dist}}
\def\continuum{{\mathfrak c}}
\def\Cantor{{\mathfrak C}}
\def\la{\langle}
\def\ra{\rangle}
\newcommand{\cov}{{\rm cov}}
\newtheorem{theorem}{Theorem}[section]
\newtheorem{corollary}[theorem]{Corollary}
\newtheorem{proposition}[theorem]{Proposition}
\newtheorem{lemma}[theorem]{Lemma}
\newtheorem{problem}[theorem]{Problem}
\newtheorem{example}[theorem]{Example}
\newtheorem{definition}[theorem]{Definition}
\newtheorem{remark}[theorem]{Remark}
\newtheorem{Fact}[theorem]{Fact}
\newtheorem{Claim}[theorem]{Claim}
\newtheorem{conjecture}{Conjecture}
\newcommand{\thm}[2]{\begin{theorem}\label{#1}{\sl #2}\end{theorem}}
\newcommand{\cor}[2]{\begin{corollary}\label{#1}{\sl #2}\end{corollary}}
\newcommand{\prop}[2]{\begin{proposition}\label{#1}{\sl #2}\end{proposition}}
\newcommand{\lem}[2]{\begin{lemma}\label{#1}{\sl #2}\end{lemma}}
\newcommand{\pr}[2]{\begin{problem}\label{#1}{\rm #2}\end{problem}}
\newcommand{\ex}[2]{\begin{example}\label{#1}{\rm #2}\end{example}}
\newcommand{\rem}[2]{\begin{remark}\label{#1}{\rm #2}\end{remark}}
\begin{document}
\title[Differentiability versus continuity]{Differentiability versus continuity: Restriction and extension theorems and 
monstrous examples}

\author[Ciesielski]{Krzysztof C. Ciesielski}
\address{Department of Mathematics,\newline \indent West Virginia University, Morgantown,\newline \indent WV 26506-6310, USA.\newline \indent
\textsc{ and }\newline \indent  \noindent Department of Radiology, MIPG,\newline \indent University of Pennsylvania,\newline 
\indent Philadelphia, PA 19104-6021, USA.}
\email{KCies@math.wvu.edu}

\author[Seoane]{Juan B. Seoane--Sep\'ulveda}
\address{Instituto de Matem\'atica Interdisciplinar (IMI), \newline\indent Departamento de An\'alisis y Matem\'atica Aplicada,\newline\indent Facultad de Ciencias Matem\'aticas, \newline\indent Plaza de Ciencias 3, \newline\indent Universidad Complutense de Madrid,\newline\indent 28040 Madrid, Spain.}
\email{jseoane@ucm.es}

\subjclass[2010]{
26A24, 
54C30,                      
46T20, 
58B10, 
54A35, 
26A21, 
26A27, 
26A30, 
54C20, 
41A05
 }
\keywords{continuous function, differentiable function, points of continuity, points of differentiability, Whitney extension theorem, interpolation theorems, independence results, classification of real functions, Baire classification functions, Pompeiu derivative}

\begin{abstract}
	The aim of this expository article is to present recent developments in the centuries old discussion on the interrelations between 
	continuous and differentiable real valued functions of one real variable. The truly new results include, among others, 
	the $D^n$-$C^n$ interpolation theorem: {\em For every $n$-times differentiable $f\colon\R\to\R$ and perfect $P\subset \R$ 
	there is a $C^n$ 
	function $g\colon\R\to\R$ such that $f\restriction P$ and $g\restriction P$ agree on an uncountable set} 
	and an example of a differentiable function $F\colon\R\to\R$ (which can be nowhere monotone)
	and of compact perfect $\mathfrak{X}\subset\R$ such that $F'(x)=0$ for all $x\in \mathfrak{X}$ 
	while $F[\mathfrak{X}]=\mathfrak{X}$; thus, the map $\mathfrak{f}=F\restriction\mathfrak{X}$ is shrinking at every point while, 
	paradoxically, not globally. However, the novelty is even more prominent in the newly discovered simplified presentations 
	of several older results, including: a new short and elementary construction of {\em everywhere differentiable nowhere
	 monotone $h\colon \R\to\R$}\/
	and the proofs (not involving Lebesgue measure/integration theory) of the theorems of Jarn\'\i k: 
	{\em Every differentiable map $f\colon P\to\R$, with $P\subset \R$ perfect, 
	admits differentiable extension $F\colon\R\to\R$}\/ and of Laczkovich: 
	{\em For every continuous $g\colon\R\to\R$ there exists a perfect $P\subset\R$ such that 
	$g\restriction P$ is differentiable}. The main part of this exposition, concerning continuity 
	and first order differentiation, is presented in an narrative that answers two classical questions: 
	\textit{To what extend a continuous function must be differentiable?}
	and \textit{How strong is the assumption of differentiability of a continuous function?}
	In addition, we overview the results concerning higher order differentiation. 
	This includes the Whitney extension theorem and the higher order interpolation theorems related to 
	Ulam-Zahorski problem. Finally, we discuss the results concerning smooth functions that are independent 
	of the standard axioms ZFC of set theory. We close with a list of currently open problems related to this subject. 
\end{abstract}


\maketitle


\newpage

\tableofcontents

\section{Introduction and overview} 
Continuity and Differentiability are among the most fundamental concepts of differential calculus. 
Students often struggle to fully comprehend these deep notions. 
Of course, as we all know, differentiability is a much stronger condition than continuity. Also, continuity can behave in many strange ways. For instance, besides the classical definition of continuous function, there are many characterizations of this concept that, usually, are not taught at an undergraduate level. One of these characterization (due to Hamlett \cite{hamlett}, see also \cite{continuidad02,continuidad03,velleman,Utz,White} for further generalizations) states that a function $f$ from $\R$ to $\R$ is continuous if, and only if, $f$ maps continua (compact, connected sets) to continua.  On the contrary, and quite surprisingly, there are nowhere continuous functions mapping connected sets to connected sets, and (separately) compact sets to compact sets (\cite{continuidad01}). Somehow, a key direction of study of continuity and differentiability deals with trying to provide a clear structure of what the set of points of continuity/differentiability looks like.

The leading theme of this expository paper is to discuss the following two questions
concerning the functions from $\R$ to $\R$: 
\begin{itemize}
	\item[Q1:] How much continuity does differentiability imply?
	\item[Q2:] How much differentiability does continuity imply?
\end{itemize}
They will be addressed in Sections~\ref{sec:CfromD} and~\ref{sec:DfromC}, respectively. 
The main narrative presented in these sections 
is independent of any results from Lebesgue measure and/or integration theory.
In particular, it can be incorporated into an introductory course of real analysis. 
The question Q1, addressed in Section~\ref{sec:CfromD}, will be interpreted as a question on the continuity of the derivatives. 
We will recall that the derivatives must have the intermediate value property 
and must be continuous on a dense $G_\delta$-set.\footnote{As usual, we say that a set is a $G_\delta$-set whenever is the countable intersection of open sets.} 
The key novelty here will be a presentation of recently found simple construction of a {\em differentiable monster}, that is, 
a differentiable function that is nowhere monotone. The existence of such examples 
shows that none of the good 
properties of the derivatives discussed earlier can be improved. 
Towards the answer of Q2, presented in Section~\ref{sec:DfromC}, we start with a construction of a
{\em Weierstrass monster}, that is, 
a continuous function
which is differentiable at no point. Then, we proceed to show that 
for every continuous functions $f\colon\R\to\R$ there exists a compact perfect set $P\subset \R$ such that
the restriction $f\restriction P$ is differentiable. 
Moreover, there exists a $C^1$ (i.e., continuously differentiable) function $g\colon\R\to\R$
for which the set $[f=g]\coloneqq\{x\in \R\colon f(x)=g(x)\}$ is uncountable, 
the result known as $C^1$-interpolation theorem. 
This function $g$ will be constructed using a recently found version of a
Differentiable Extension Theorem of Jarn\'\i k. 
As a part of this discussions we also show that differentiable functions 
from a compact perfect set $P\subset\R$ into $\R$ can behave quite paradoxical,
by describing a simple construction of a perfect subset $\mathfrak{X}$
of the Cantor ternary set $\Cantor\subset \R$
and a differentiable surjection
$\mathfrak{f}\colon \mathfrak{X}\to\mathfrak{X}$ such that $\mathfrak{f}\,'\equiv 0$.  

In Section~\ref{sec:higher} we shall discuss the extension and interpolation theorems for the classes of functions from $\R$ to $\R$ of higher smoothness: 
$D^n$, of $n$-times differentiable functions, and $C^n$, of those functions from $D^n$
whose $n$th derivative is continuous.

Section \ref{sec:independence} shall  provide a set theoretical flavor to this expository paper by giving an overview of the results 
related to smooth functions which are independent of the axiomatic set system ZFC. We shall go through interpolations theorems via consistency results, and provide some consequences of 
the Martin's Axiom (MA), Continuum Hypothesis (CH), or Covering Property Axiom (CPA) on ``how many'' $C^n$ functions are needed in order to cover any $f \in D^n$. Furthermore, and in the spirit of the classical {\em Sierpi\'nski decomposition}, we will also consider 
when the plane can be covered by the graphs of few 
(i.e., fewer than continuum many) continuous/continuously differentiable functions.
Other consistency results will be presented. At the end, some open questions and final remarks will be provided in 
Section~\ref{sect6}.

\section{Continuity from differentiability}\label{sec:CfromD}

Clearly any differentiable function $F\colon\R\to\R$  is continuous. 
Thus, the true question we will investigate 
here is: 
\begin{quote}
Can we ``squeeze'' more continuity properties 
from the assumption of differentiability of $F$? 
\end{quote}
More specifically: 
\begin{quote}
{\em To what extent the derivative $F'$ of $F$ must be continuous?}
\end{quote}

Of course, for a differentiable complex function $F\colon{\mathbb C}\to{\mathbb C}$, its derivative
is still differentiable and, thus, continuous. The same is true for any real analytic function
$F(x)\coloneqq\sum_{n=0}^\infty a_n x^n$. Moreover, one of the most astonishing results, 
due to Augustin Louis Cauchy\footnote{We include the pictures and birth-to-death years of the main 
contributors to this story who are already deceased.} (1789-1857), \cite{cauchy}, 
in the theory of complex variables states that for functions of a single complex variable, 
a function being analytic is equivalent to it being holomorphic. 
Recall that, although the term analytic function is often used interchangeably with that of holomorphic function, 
the word analytic is defined in a much broader sense in order to denote any function 
(real, complex, or of more general type) that can be written as a convergent power series in a neighborhood of each point in its domain.

\begin{figure}[h!]
	\centering
	\includegraphics[width=0.3\textwidth]{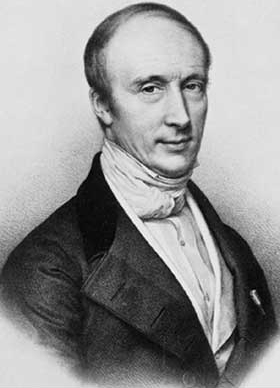}
	\caption{Augustin Louis Cauchy.}
	\label{pic_cauchy}
\end{figure}

For the functions from $\R$ to $\R$ the situation is considerably more complicated.
It is true that most of the derivatives we see in regular calculus courses are continuous. 
Nevertheless (in spite of what probably many calculus students believe)
there exist differentiable functions from $\R$ to $\R$ with discontinuous derivatives.
This is commonly exemplified by a map $h\colon\R\to\R$ given by (\ref{D1notC1}),
which appeared already in a 
1881 paper~\cite[p.~335]{Vol} of Vito Volterra
(1860-1940):
\begin{equation}\label{D1notC1}
	h(x)\coloneqq
	\begin{cases}
		x^2\sin\left(x^{-1}\right) & \mbox{ for $x\neq 0$},\\
		0 & \mbox{ for $x=0$},\\
	\end{cases}
\end{equation}
(see Fig.~\ref{Fig:C1notD1}) with the 
derivative\footnote{$h'(0)=0$ follows from the squeeze theorem, since 
	$\left|\frac{h(x)-h(0)}{x-0}\right|\leq \left|\frac{x^2-h(0)}{x-0}\right|=|x|$.}
\begin{equation*}
	h'(x)\coloneqq
	\begin{cases}
		2x\sin\left(x^{-1}\right)- \cos\left(x^{-1}\right) & \mbox{ for $x\neq 0$},\\
		0 & \mbox{ for $x=0$}.\\
	\end{cases}
\end{equation*}

\begin{figure}[h!]
	\centering
	\begin{tabular}{cc}
		\includegraphics[width=0.48\textwidth]{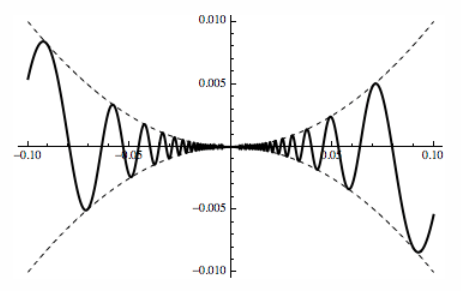} & \includegraphics[width=0.48\textwidth]{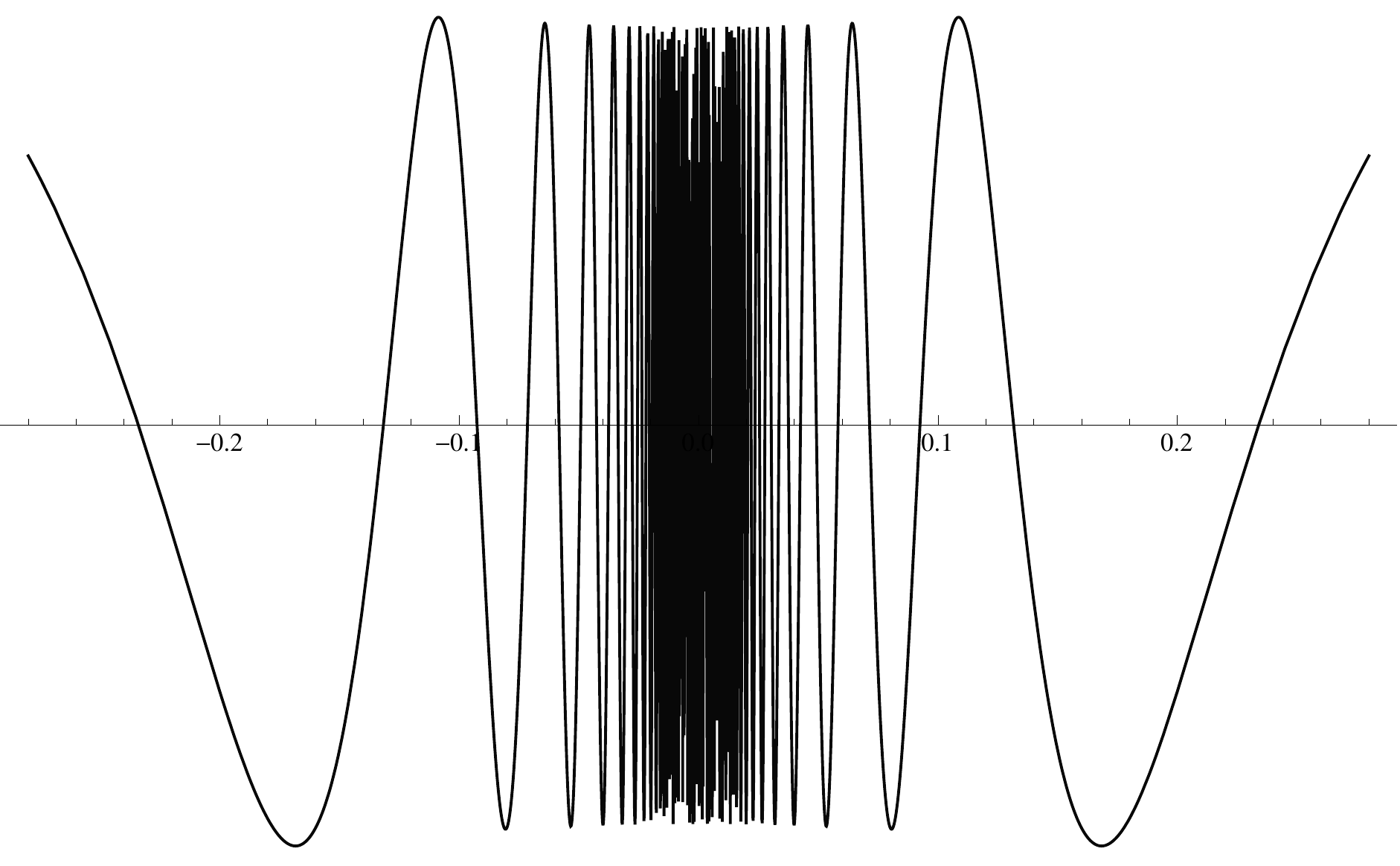}
	\end{tabular}
	\caption{On the left, the graph of function $h$ given by equation \eqref{D1notC1}. On the right, 
	the graph its derivative, $h'$.}\label{Fig:C1notD1}
\end{figure}

The existence of such examples lead mathematicians to the investigation of
which functions are the derivatives and, more generally, of the 
properties that the derivatives must have.
The systematic study of the structure of derivatives (of functions from $\R$ to $\R$)
can be traced back to at least the second half of the 19th century.
The full account of these studies can be found in the 250 page monograph \cite{Bru1978} of 
Andrew Michael Bruckner (1932--) from 1978 (with bibliography containing 218 items) and its 1994 updated edition \cite{Bru1994}. 
The limited scope of this article does not allow us to give a full account of such a vast material.
Instead, we will focus here on the results that are best suited to this narrative. 

We will start with discussing the ``nice'' properties of the class of derivatives, that is,
those that coincide with the properties of the class of all continuous functions. 

\subsection{Nice properties of derivatives}\label{sec:Nice}
Of course, by the Fundamental Theorem of Calculus, any continuous function $f\colon\R\to\R$ is
a derivative of $F\colon\R\to\R$ defined as 
$F(x)\coloneqq\int_0^x f(t)\; dt$. 
Also, in spite of the existence of discontinuous derivatives, 
the derivatives share several ``good" properties of continuous functions. 
For example, the derivatives form a vector space:\footnote{However, the product of a two derivatives need
	not be a derivative. For example, if $F(x)\coloneqq x^2\sin\left(1/x^{2}\right)$ for $x\neq 0$ and $F(0)\coloneqq 0$, then $F$ is differentiable,
	but $(F')^2$ is not a derivative (see, e.g., \cite[page 17]{Bru1978}).\label{footnote:prod} 
}
this follows from a basic differentiation formula, that $(a\, f(x)+ b\, g(x))'=a\, f'(x)+ b\, g'(x)$
for every differentiable functions $f,g\colon\R\to\R$. 
Also, similarly as any continuous function, any derivative has the intermediate value property,
as shown by Jean-Gaston Darboux (1842-1917) in his 1875 paper~\cite{Darb}. 

\begin{figure}[h!]
	\centering
	\begin{tabular}{cc}
		\includegraphics[width=0.355\textwidth]{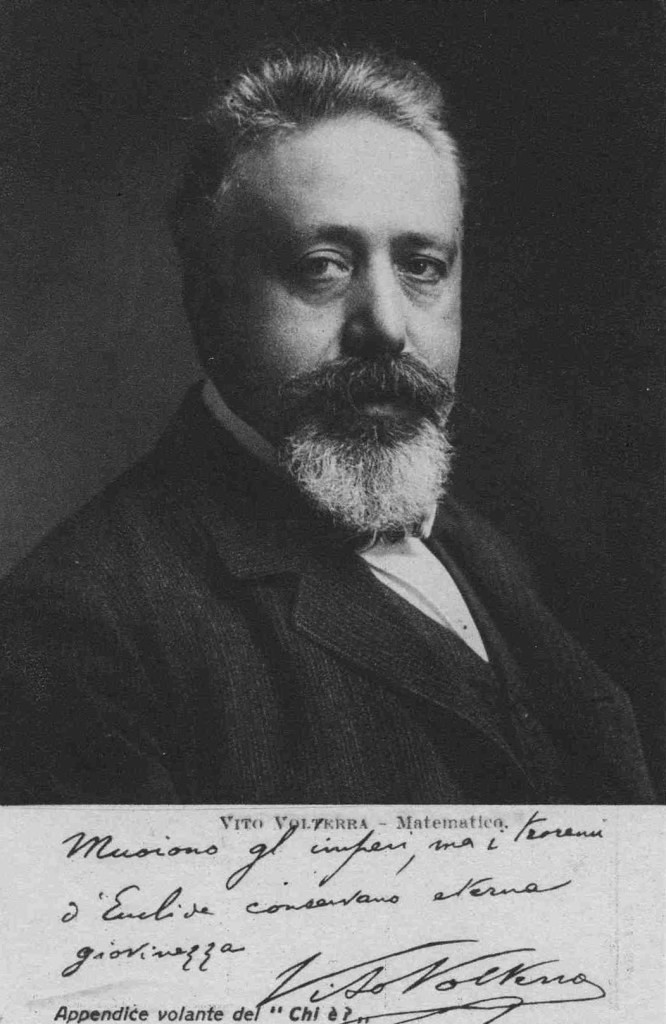} \hspace{.3cm} & \hspace{.3cm} \includegraphics[width=0.42\textwidth]{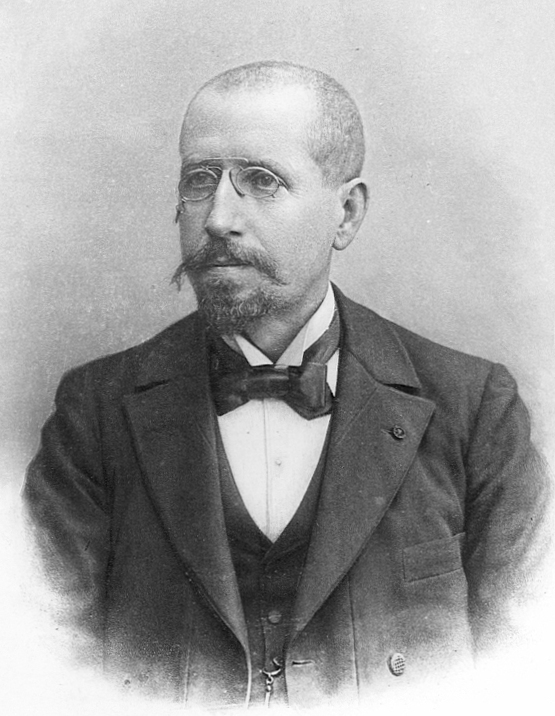}
	\end{tabular}
	\caption{Vito Volterra and Jean-Gaston Darboux.}
	\label{pic_VD}
\end{figure}

\thm{thm:Darboux}{Any derivative $f\colon\R\to\R$ has the intermediate value property, that is,
	for every $a<b$ and $y$ between $f(a)$ and $f(b)$ there exists an $x\in[a,b]$ with $f(x)=y$.
}

\begin{proof}
	Assume that $f(a)\leq y\leq f(b)$ (the case $f(b)\leq y\leq f(a)$ is similar).
	Let $F\colon \R\to\R$ be such that $F'=f$
	and define $\varphi\colon \R\to\R$ as 
	$\varphi(t)\coloneqq F(t)-yt$. 
	Then $\varphi'(t)=f(t)-y$  and 
	$\varphi'(a)=f(a)-y\leq 0\leq f(b)-y=\varphi'(b)$. 
	We need to find an $x\in[a,b]$ with $\varphi'(x)=0$.
	This is obvious, unless $\varphi'(a)< 0<\varphi'(b)$,
	in which case the maximum value of $\varphi$ on $[a,b]$,
	existing by the extreme value theorem,
	must be attained at some $x\in(a,b)$.
	For such $x$ we have $\varphi'(x)=0$.
\end{proof}

An alternative proof of Theorem~\ref{thm:Darboux} can be found in \cite{Olsen}. 
Because of Darboux's work, nowadays the functions from $\R$ to $\R$ that have the intermediate value property
(or, more generally, the functions from a topological space $X$ into a topological space $Y$
that map connected subsets of $X$ onto connected subsets of $Y$)
are often called {\em Darboux functions}.

It is easy to see that a composition of Darboux functions remains Darboux.
Thus, the composition of derivatives, similarly as the composition of  continuous functions on $\R$, still enjoys the intermediate value property, in spite the fact that a composition of two derivatives need not be a derivative.\footnote{For example,\label{footnote:comp}
	if $F$ is  as in the footnote \ref{footnote:prod}, then $(F')^2$ is a composition of two derivatives,
	$F'(x)$ and $g(x)\coloneqq x^2$, which is not a derivative.}

\begin{figure}[h!]
	\centering
	\includegraphics[width=0.4\textwidth]{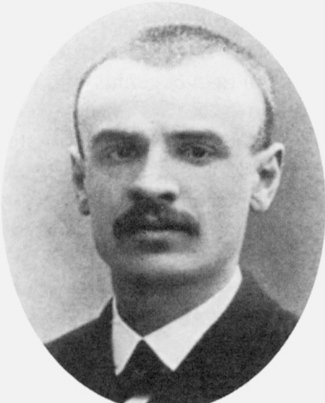}
	\caption{Ren\'e-Louis Baire.}
	\label{pic_Baire}
\end{figure}

The next result, that comes from the 1899 dissertation~\cite{Baire} of Ren\'e-Louis Baire (1874-1932),
shows that every derivative must have a lot of points of 
continuity. (See also \cite{MEDV}.) 
This research of Baire is closely related to another fascinating real analysis topic: the study of {\em separately continuous functions}\/ (i.e., of functions $g\colon\R^2\to\R$ for which the sectional maps $g(x,\cdot)$ and $g(\cdot,x)$ are continuous for all $x\in\R$),\footnote{It is easy to see that 
$f\colon\R\to\R$ is differentiable if, and only if, the difference quotient map 
$q(x,y)\coloneqq\frac{f(x)-f(y)}{x-y}$, defined for all $x\neq y$, can be extended to a separately continuous map $g\colon\R^2\to\R$,
with $g(x,x)=f'(x)$ for all $x\in\R$. In this context, we sometimes refer to the functions of more than one variable
as {\em jointly continuous}\/ if they are continuous in the usual (topological) sense.}
recently surveyed by the first author
and David Alan Miller \cite{CiMi}. 
(Most interestingly, Cauchy, in his 1821 book {\em Cours d'analyse}~\cite{cours},
proves that separate continuity implies continuity, which is well known to be false.
Nevertheless, as explained in \cite{CiMi}, Cauchy was not mistaken!)

\thm{thm:DerIsB1}{The derivative of any differentiable function $F\colon\R\to\R$ is Baire class one, 
that is, it is a pointwise limit of continuous functions. In particular, since the set of points of continuity 
of any Baire class one function is a dense $G_\delta$-set,	the same is true for $F'$.}

\begin{proof} Clearly $F'$ is a pointwise limit of continuous functions, since we have
	$F'(x)=\lim_{n\to\infty} F_n(x)$, where functions $F_n(x)\coloneqq\frac{f(x+1/n)-f(x)}{1/n}$ are clearly continuous. 
	
	Also, for any function $g\colon\R\to\R$, the  set $C_g$ of points of continuity of $g$ 
	is  a $G_\delta$-set: 
	$C_g\coloneqq\bigcap_{n=1}^\infty V_n$, where the sets
	\[
	V_n\coloneqq\bigcup_{\delta>0}\{x\in\R\colon \mbox{$|g(s)-f(g)|<1/n$ for all $s,t\in(x-\delta,x+\delta)$}\}
	\]
	are open. At the same time, any pointwise limit $g\colon\R\to\R$ of continuous functions $g_n\colon\R\to\R$
	is continuous on a dense $G_\delta$-set $G\coloneqq\bigcap_{n=1}^\infty \bigcup_{N=1}^\infty U^n_N$, 
	where each $U^n_N$ is the interior of the closed set
	$$\{x\in\R\colon |f_k(x)-f_m(x)|\leq 1/n \mbox{ for all $m,k\geq N$}\}.$$ 
	(See e.g., \cite[theorem 48.5]{Munk}.)
	Thus, the set of points of continuity of any Baire class one function must be dense (containing a set in form of $G$) 
	and a $G_\delta$-set. 
\end{proof}

Another interesting property of continuous functions that is shared by derivatives is the fixed point property:

\prop{prop:fixed}{If $f\colon [-1,1]\to[-1,1]$ is a derivative, then it has a fixed point, that is,
	there exists an $x\in[-1,1]$ such that $f(x)=x$.}

\begin{proof}
	The function $g\colon [-1,1]\to\R$, defined as $g(x)\coloneqq f(x)-x$ is a derivative. Thus, by Theorem~\ref{thm:Darboux}, it has the intermediate value property. Since $g(-1)=f(x)+1\geq 0$ and $g(1)=f(1)-1\leq 0$, there exists $z\in[-1,1]$ such that $f(z)-z=g(z)=0$. Thus, $f(z)=z$, as needed.
\end{proof}

More interestingly, a composition of finitely many derivatives, from $[0,1]$ to $[0,1]$, also possesses the fixed point property. For the composition of two derivatives this has been proved, independently, in \cite{CO} and \cite{EKP},
while the general case is shown in \cite{Szuca}.
This happens, in spite the fact that a composition of two derivatives need not be
of Baire class one (so, by Theorem~\ref{thm:DerIsB1}, a derivative).  
This is shown by the following example, which is a variant of one from \cite{BrCi}.\footnote{The fixed point property of the composition 
of derivatives could be easily deduced, if it could be shown that the composition of the derivatives has a connected
graph. However, it remains unknown if such a result is true, see e.g., \cite{BrCi} and Problem~\ref{pr:comp}.}

\ex{Ex:Andy}{There exist derivatives 
	$\varphi,\gamma\colon[-1,1]\to[-1,1]$ such that their composition $\psi\coloneqq\varphi\circ\gamma$ is not of Baire class one. 
}

\begin{proof}
	Let $\gamma(x)\coloneqq\cos(x^{-1})$ for $x\neq 0$ and $\gamma(0)\coloneqq 0$. 
	It is a derivative, as a sum of two derivatives: 
	a continuous function $f(x)\coloneqq x\sin\left(x^{-1}\right)$ for $x\neq 0$, $f(0)\coloneqq 0$,
	and function $-h'$, where $h$ is as in (\ref{D1notC1}). 
	The graph of $\gamma$ is of the form from the right hand side of Figure~\ref{Fig:C1notD1}.
	
	Let $\varphi(x)\coloneqq m\ h'(x-b)$  for $x\in[-1,1]$, where $h$ is  an inverse of 
	Pompeiu's function  from 
	Proposition~\ref{pr:Pom}, $b\in\R$ is such that $h'(b)=0$, and $m>0$ is such that $\varphi[-1,1]\subset[-1,1]$.  
	Clearly, $\varphi$ is also a derivative. 
	
	To see that $\psi=\varphi\circ\gamma$ is not of Baire class one, by Theorem~\ref{thm:DerIsB1}
	it is enough to show that it is discontinuous on the set
	$G\coloneqq\{x\in(-1,1)\colon \varphi(x)=0\}$, which is a dense $G_\delta$. 
	Thus, fix $x\in G$ and note that $\psi(x)=0$. 
	It suffices to show that, for every $\e>0$ such that $(x-\e,x+\e)\subset (-1,1)$, 
	there exists $y\in (x-\e,x+\e)$ with $\psi(y)=1$. 
	Indeed, since $\varphi$ is nowhere constant, there exists $z\in(x,x+\e)$ such that 
	$\varphi(z)>0$. Then, since $\gamma[(0,\varphi(z))]=[-1,1]$, 
	there exists $p\in (0,\varphi(z)]$ such that $\gamma(p)=1$. 
	Also, since $\varphi$ is Darboux and $p\in (0,\varphi(z)]=(\varphi(x),\varphi(z)]$,
	there exists $y\in(x,z)\subset(x,x+\e)$ such that $\varphi(y)=p$. Thus, $\psi(y)=\varphi(\gamma(y))=\varphi(p)=1$, as needed. 
\end{proof}

Next we turn our attention to show, by means of illuminating examples, that the nice properties of the derivatives we stated above cannot be improved in any essential way.

\subsection{Differentiable monster and other examples} 

We start here with showing that all that can be said about the set of points of continuity of
derivatives is already stated in Theorem~\ref{thm:DerIsB1}. 

\thm{thm:DiscDer}{For every dense $G_\delta$-set 
	$G\subset \R$  there exists a differentiable function $f\colon\R\to\R$ such that
	$G$ is the set of points of continuity of the derivative $f'$.}

\begin{proof}[Sketch of proof]  
	Choose $d\in(0,1)$ such that $h'(d)=0$, where $h$ is as in 
	(\ref{D1notC1}). 
	For every closed nowhere dense $E\subset \R$ define $f_E\colon \R\to[-1,1]$ via 
	\[
	f_E(x)\coloneqq
	\begin{cases}
	\displaystyle h\left(\frac{2d\, \dist(x,\{a,b\})}{b-a}\right) & \mbox{ for $x$ in a component $(a,b)$ of $\R\setminus E$,}\\
	0 & \mbox{ for $x\in E$}.
	\end{cases}
	\] 
	This is a variant of a Volterra function from~\cite{Vol}.
	It is easy to see that $f_E$ is differentiable with the derivative continuous on $\R\setminus E$
	and having oscillation 2 at every $x\in E$. 
	
	Now, if $G\subset \R$  is a dense $G_\delta$, then $\real\setminus G\coloneqq \bigcup_{n<\omega}E_n$,
	some some family $\{E_n\colon n<\omega\}$
	of closed nowhere dense sets in $\R$. In particular (see e.g. \cite[Theorem 7.17]{Rudin}), 
	the function $f\coloneqq \sum_{n<\omega} 3^{-n}f_{E_n}$ is differentiable and its derivative 
	$f'=\sum_{n<\omega} 3^{-n}f_{E_n}'$ is continuous precisely at the points of $G$. 
\end{proof} 

Theorems~\ref{thm:DerIsB1} and~\ref{thm:DiscDer} immediately give the following characterization
of the sets of points of continuity of the derivatives.

\cor{cor:ContOfDer}{For a $G\subset \R$, there exists a differentiable function $f\colon\R\to\R$ such that
	$G$ is the set of points of continuity of its derivative $f'$ if, and only if, $G$ is a dense $G_\delta$-set.}

Next, we will turn our attention to an example of a differentiable function which 
is one of the strangest, most mind-boggling
examples in analysis.

\subsubsection{Differentiable nowhere monotone maps}
We will start with the following elementary example showing that,
for a differentiable function $\psi\colon \R\to\R$, a non-zero value of the derivative at a point does 
not imply the monotonicity of the function in a neighborhood of the point. For instance (see \cite{dover2003}), this is the case for $\psi(x)\coloneqq x+2h(x)$, where $h$ is given by (\ref{D1notC1}), that is, 
\begin{equation*}
\psi(x)\coloneqq
\begin{cases}
x + 2 x^2 \sin (x^{-1}) & \mbox{ for $x\neq 0$,}\\
0 & \mbox{ for $x=0$,}\\
\end{cases}
\end{equation*}
see Fig.~\ref{f2}. Indeed $\psi'(0)=1$, while $\psi$ is not monotone on any non-empty interval $(-\delta,\delta)$.

\begin{figure}[h!]
	\centering
	\begin{tabular}{cc}
		\includegraphics[width=0.45\textwidth]{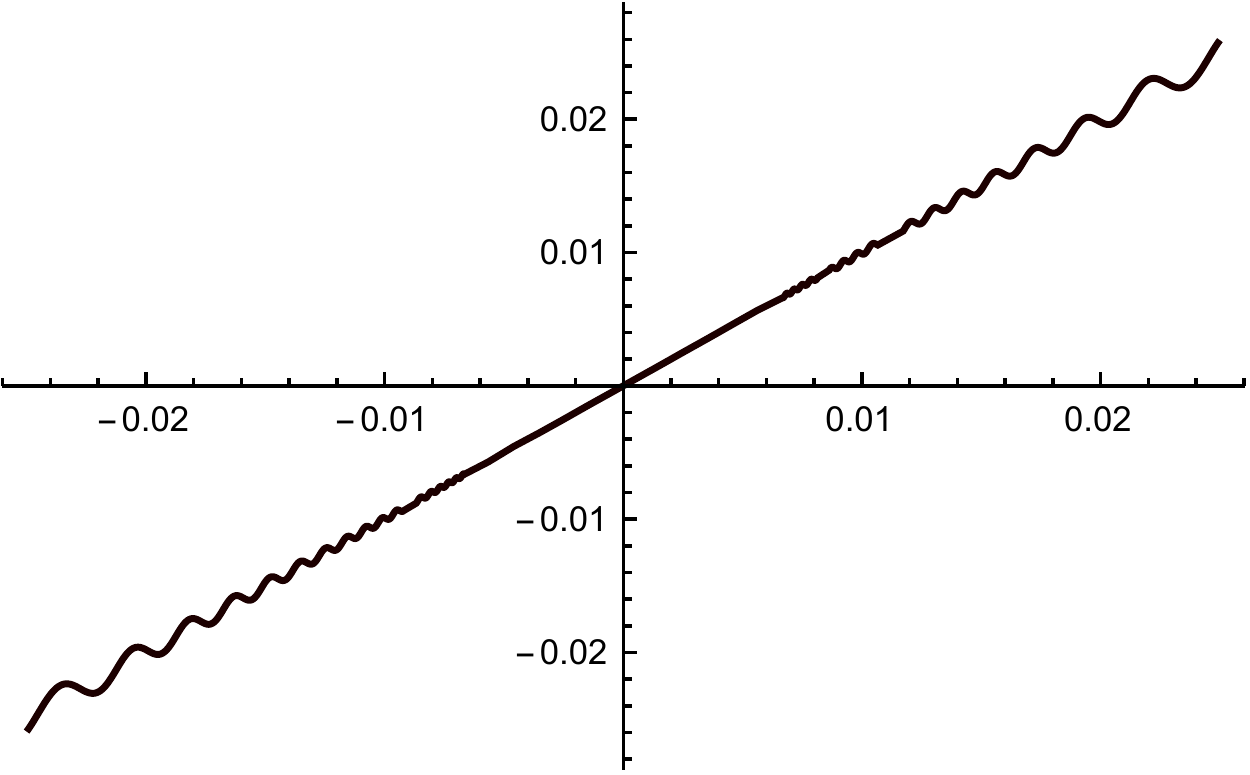} & \includegraphics[width=0.45\textwidth]{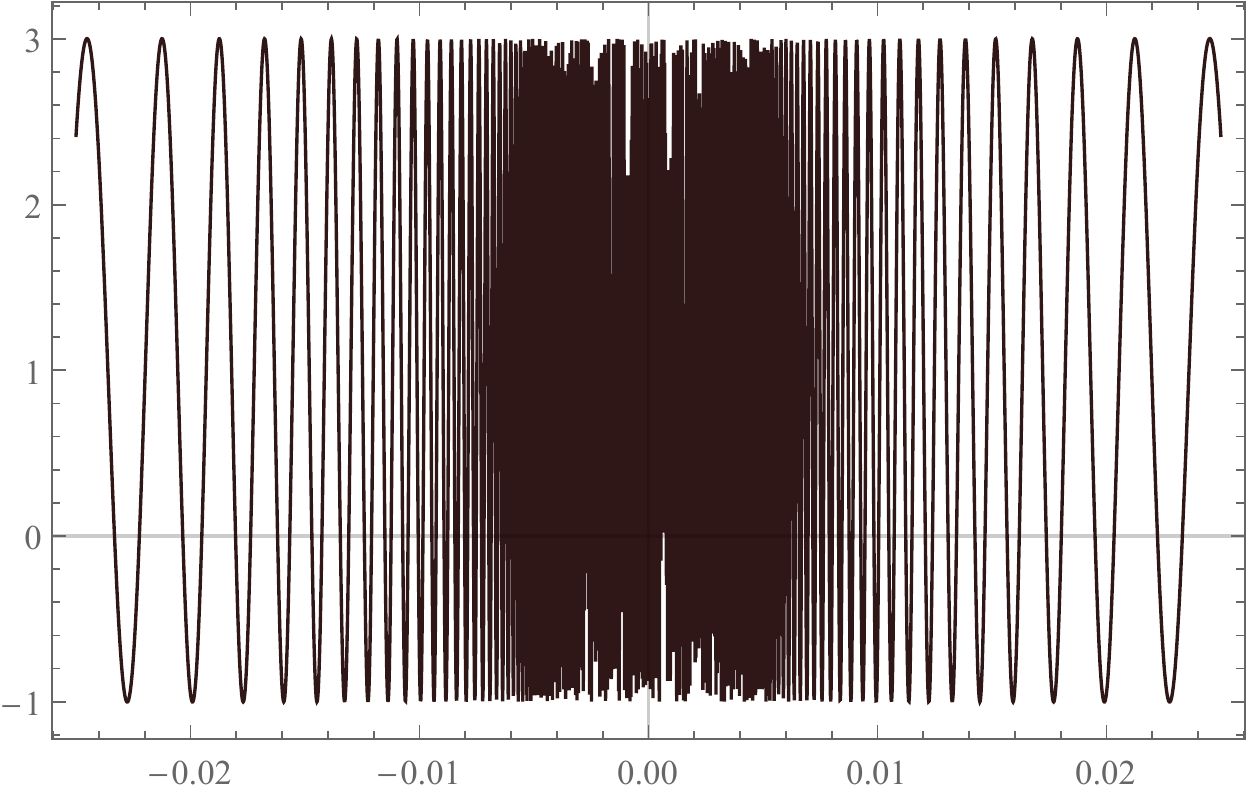}
	\end{tabular}
	\caption{The graphs of $\psi$ and $\psi'$.}\label{f2}
\end{figure}

The example from the next theorem pushes this pathology to the extreme.

\thm{th:Dmonster}{There exists a differentiable function $f\colon\R\to\R$ which is monotone on no non-trivial interval. In particular, each of the sets 
	\begin{equation*}
Z_f\coloneqq\{x\in\R\colon f'(x)=0\} \text{ and } Z_f^c\coloneqq\{x\in\R\colon f'(x)\neq 0\}
	\end{equation*} 
is dense and $f'$ is discontinuous at every $x\in Z_f^c$.}

The graph of such a function is 
simultaneously smooth and very rugged. This sounds (at least to us) like an oxymoron.
Therefore, we started to refer to such functions as {\em differentiable monsters},
see \cite{KC:Monthly}. 
Of course, a differentiable monster $f$ must attain local maximum (as well as local minimum) on every interval.
In particular, the set $Z_f$ must indeed be dense. Its complement, $Z_f^c$, must also be dense, since 
otherwise $f$ would be constant on some interval. 

\begin{figure}[h!]
	\centering
	\begin{tabular}{cc}
		\includegraphics[width=0.4\textwidth]{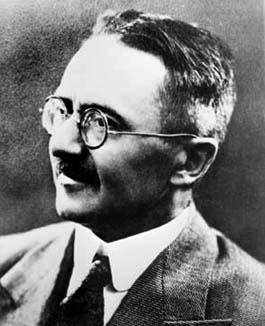} \hspace{.3cm} & \hspace{.3cm} \includegraphics[width=0.34\textwidth]{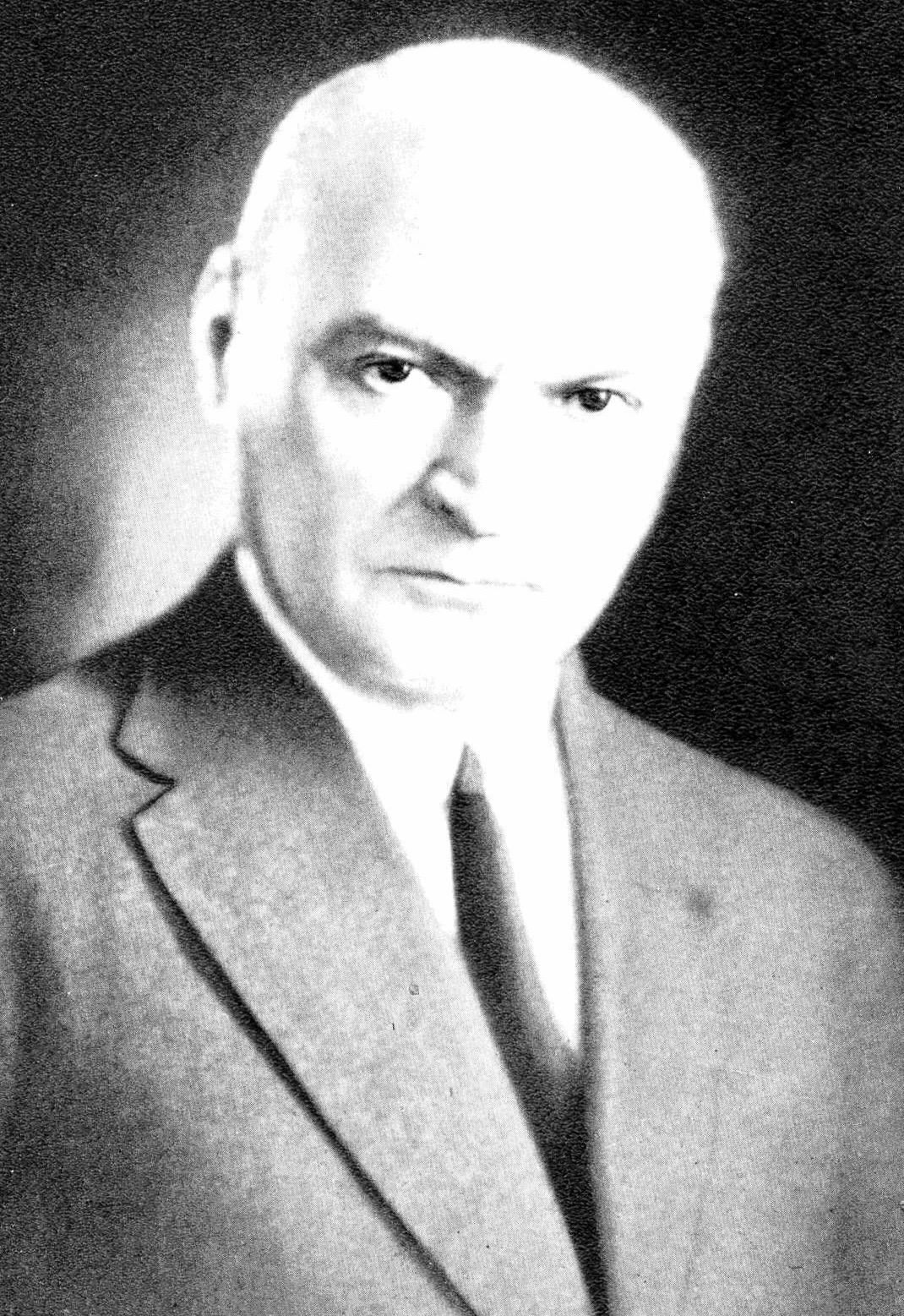}
	\end{tabular}
	\caption{Arnaud Denjoy and Dimitrie Pompeiu.}
	\label{pic_DenPom}
\end{figure}

The history of differentiable monsters is described in detail in the  1983 paper of A.~M.~Bruckner~\cite{Br}. 
The first construction of such a function was given in 1887 (see \cite{Ko1}) by
Alfred~K\"opcke (1852-1932). (There was, originally, a gap in \cite{Ko1}, but it was subsequently corrected in~\cite{Ko2,Ko3}.)
The most influential study of this subject is a 1915 paper~\cite{De} of Arnaud Denjoy (1884--1974). 
The construction we describe below comes from  a recent paper~\cite{KC:Monthly} of the first author. 
It is simpler than the constructions from 
the 1974 paper~\cite{KS} of Yitzhak~Katznelson \mbox{(1934--)} and Karl Stromberg (1931--1994)
and 
from the 1976 article~\cite{Weil} of Clifford Weil.  There are quite a few different technical constructions of such monster, some of them including (for instance) the use of wavelets (see the work by R.M. Aron, V.I. Gurariy and the second author, \cite{AGS} or, also, \cite{PAMS2010}).

As a matter of fact (and with the appropriate background) our construction of a differentiable monster 
can be reduced to a single line:
\[
\mbox{\em $f(x)\coloneqq h(x-t)-h(x)$, where $h$ is the inverse of a Pompeiu's map $g$}
\]
as described below and $t$ is an arbitrary number from some residual set
(i.e., a set containing   a dense $G_\delta$-set).
A more detailed argument follows.

\subsubsection*{Pompeiu's functions.}
Let $Q\coloneqq \{q_i\colon i\in\N\}$ be an enumeration of a countable dense
subset of $\R$ (e.g., the set $\Q$ of rational numbers) such that $|q_i|\leq i$ for all $i\in \N$.
Fix an $r\in(0,1)$ and let $g\colon\R\to\R$ be defined by 
$g(x)\coloneqq \sum_{i=1}^\infty r^i (x-q_i)^{1/3}.$
This $g$ is essentially\footnote{Usually $g$ is defined on $[0,1]$ as
$\hat g(x)\coloneqq \sum_{i=1}^\infty a_i (x-q_i)^{1/3}$, where 
$Q$ is an {\em arbitrary}\/ enumeration of $\Q\cap [0,1]$ and numbers 
$a_i>0$ are arbitrary so that $\sum_{i=1}^\infty a_i$ converges.
This $\hat g$ can also be transformed into the function $g\colon\R\to\R$ we need, rather than using our definition.
Specifically, if $[c,d]=\hat g[[0,1]]$, then we can take $g\coloneqq h_1\circ \hat g\circ h_0$, 
where $h_0$ and $h_1$ are the diffeomorphisms from $\R$ onto $(0,1)$ and from $(c,d)$ onto $\R$, respectively. 
}
 the function constructed in the 1907 paper~\cite{Po} of Dimitrie Pompeiu (1873--1954), as seen in Fig.~\ref{Pompeiu}. 
(See~\cite{KC:Monthly}. 
Compare also~\cite{Br} or~\cite[section 9.7]{TBB}.)

Intuitively, $g$ has a non-horizontal tangent line at every point,
vertical at each $(q_i,g(q_i))$, since the same is true for the map $(x-q_i)^{1/3}$.
The graph of its inverse, $h$, is the reflection of the graph of $g$ with respect to the line $y=x$.
So, $h$ also has a tangent line at every point, 
the vertical tangent lines of $g$ becoming horizontal tangents of $h$.

\begin{figure}[h!]
	\centering
\begin{tabular}{cc}
	\includegraphics[width=.45\textwidth]{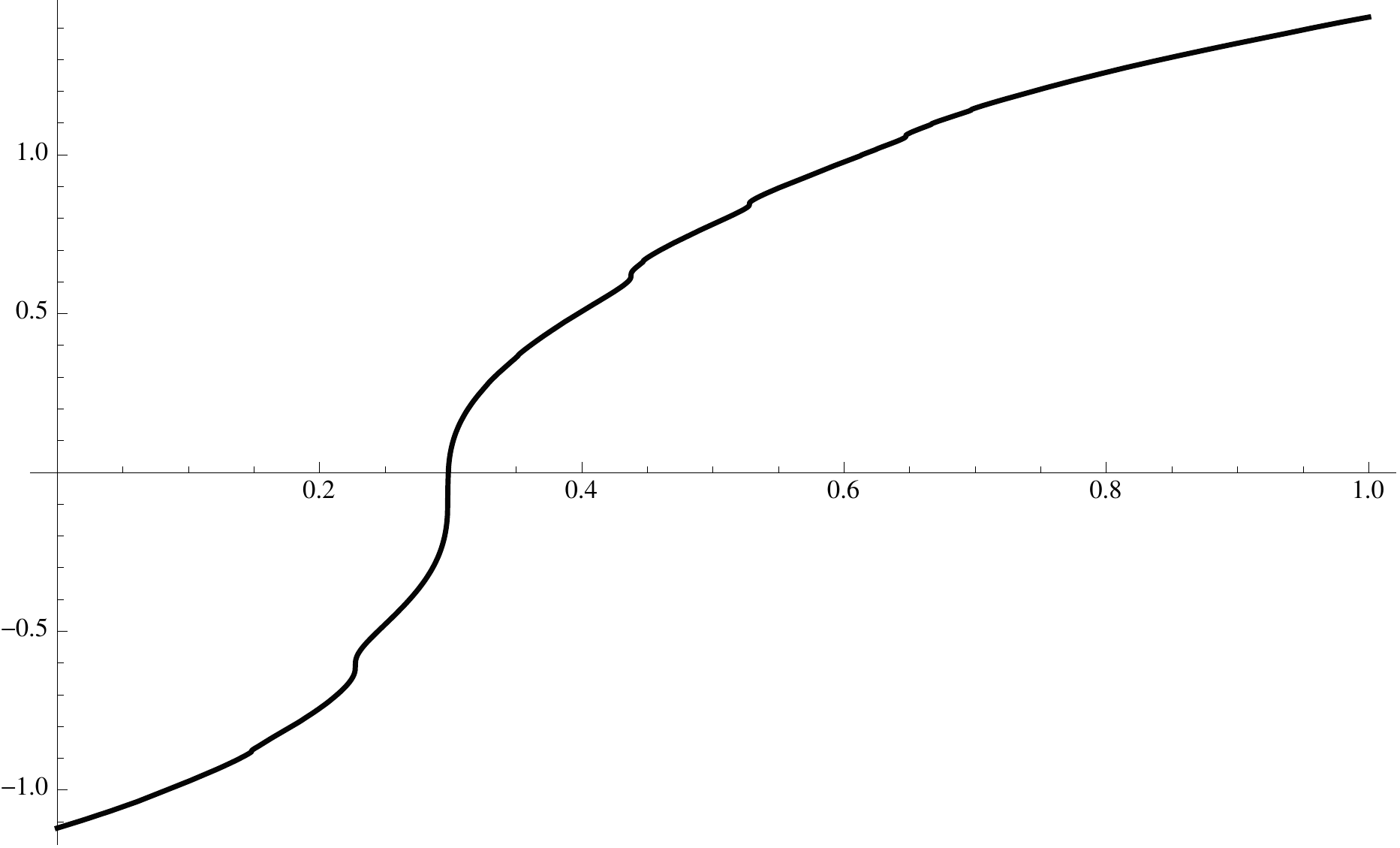} & \includegraphics[width=.45\textwidth]{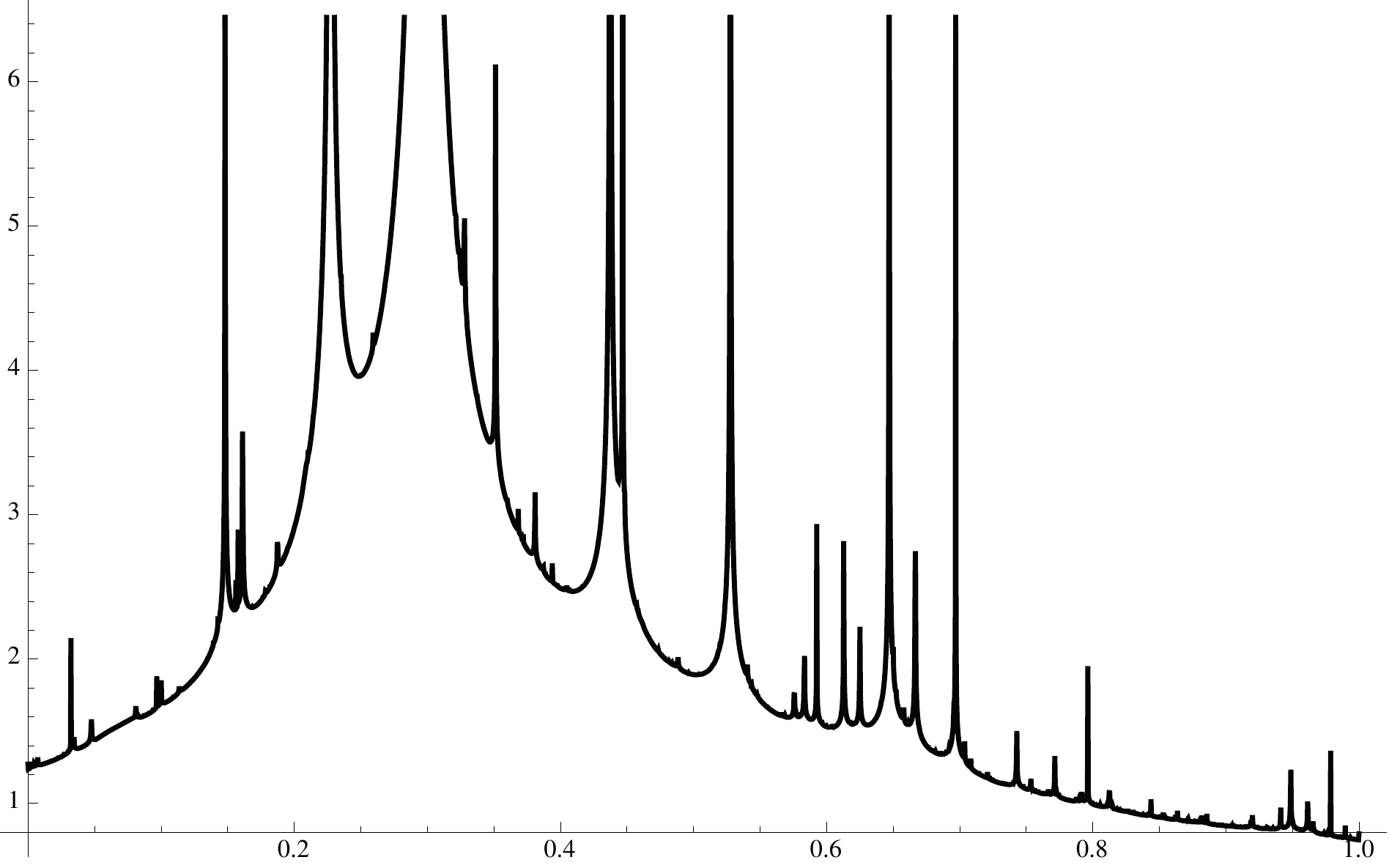}
\end{tabular}
	\caption{Rough sketch of the graph of a partial sum of Pompeiu's original example,
	function $g$ and its derivative, constructed over the interval $[0,1]$.}\label{Pompeiu}
\end{figure}

More formally,  

\prop{pr:Pom}{The map $g$ is continuous, strictly increasing, 
	and  onto $\R$. Its inverse $h\coloneqq g^{-1}\colon\R\to\R$ 
	is everywhere differentiable with $h'\geq 0$ and such that the sets
	$Z\coloneqq\{x\in \R\colon h'(x)=0\}$ and $Z^c\coloneqq\{x\in \R\colon h'(x)\neq 0\}$
	are both dense in~$\R$.
}

\begin{proof}
	The series 
	$$g(x)\coloneqq \sum_{i=1}^\infty r^i (x-q_i)^{1/3}$$
	converges uniformly on every bounded set: 
	$$|g(x)|\leq\sum_{i=1}^\infty r^i(|x|+i+1)$$ since 
	$$\left|(x-q_i)^{1/3}\right|\leq (|x|+|q_i|+1)^{1/3}\leq |x|+|q_i|+1\leq |x|+i+1.$$
	Thus, $g$ is continuous. 
	It is strictly increasing and onto $\R$, since 
	that is true of
	every term $\psi_i(x)\coloneqq r^i (x-q_i)^{1/3}$. 
	
	The trickiest part is to show that 
	\begin{equation}\label{eq1}
		g'(x)=\sum_{i=1}^\infty \psi_i'(x)\left(= \sum_{i=1}^\infty r^i \frac13 \frac{1}{(x-q_i)^{2/3}}\right).
	\end{equation}
	However, this clearly holds when $\sum_{i=1}^\infty \psi_i'(x)=\infty$, since then, for every $y\neq x$, we have
	$$\frac{g(x)-g(y)}{x-y}=\sum_{i=1}^\infty \frac{\psi_i(x)-\psi_i(y)}{x-y}\geq \sum_{i=1}^n \frac{\psi_i(x)-\psi_i(y)}{x-y},$$ 
	and the last expression is arbitrarily large for large enough $n$ and $y$ close enough to $x$. 
	On the other hand, when $\sum_{i=1}^\infty \psi_i'(x)<\infty$, then
	(\ref{eq1}) follows
	from the fact that 
	$$0<\frac{\psi_i(x)-\psi_i(y)}{x-y}\leq6\psi_i'(x)$$
	for every $y\neq x$.\footnote{It is enough to prove this for $\psi(x)\coloneqq x^{1/3}$. It holds for $x=0$, as $\psi'(0)=\infty$. 
		Also, since $\psi(x)$ is odd and concave on $(0,\infty)$, we can assume that $x>0$ and $y<x$. 
		Then $L(y)<\psi(y)$, where $L$ is the line passing through $(x,\psi(x))$ and $(0,-\psi(x))$. 
		So,
		$0<\frac{\psi(x)-\psi(y)}{x-y}<\frac{L(x)-L(y)}{x-y}=\frac{2x^{1/3}}{x}=6\psi'(x)$, as needed.}
	Indeed, given $\e>0$ and $n\in \N$ for which  
	$\sum_{i=n+1}^\infty \psi_i'(x)<\e/14$, 
	
	\begin{align*}
	\left|\frac{g(x)-g(y)}{x-y}-\sum_{i=1}^\infty \psi_i'(x)\right| & \le \sum_{i=1}^n \left|\frac{\psi_i(x)-\psi_i(y)}{x-y}-\psi_i'(x)\right|+7\left| \sum_{i=n+1}^\infty \psi_i'(x)\right|\\
	& \le \sum_{i=1}^n \left|\frac{\psi_i(x)-\psi_i(y)}{x-y}-\psi_i'(x)\right|+\frac\e 2
	\end{align*}
	which 
	is less than $\e$ for $y$ close enough to $x$.

	Now, by 
	(\ref{eq1}),  
	$g'(x)=\infty$ on the dense set $Q$. 
	Therefore, the inverse $h\coloneqq g^{-1}$ is strictly increasing and differentiable, with $h'\geq 0$.
	The set $Z\coloneqq\{x\in \R\colon h'(x)=0\}$ is dense in $\R$,
	since it contains the dense set $g[Q]$. The complement of $Z$ must be dense since, otherwise, $h$ would be constant 
	on some interval. 
\end{proof}

\begin{proof}[Construction of $f$ from Theorem~\ref{th:Dmonster}]

	Let $h$, $Z$, and $Z^c$ be as in 
	Proposition~\ref{pr:Pom} and let $D\subset \R\setminus Z$ be countable 
	and dense.
	Since $h'$ is discontinuous at every $x\in Z^c$, we conclude from Theorem~\ref{thm:DerIsB1} that the set 
	$G\coloneqq \bigcap_{d\in D} \bigl((-d+Z)\cap(d-Z)  \bigr)$ is residual. 
	We claim that, for any $t\in G$, the function $f\colon \R\to\R$ defined as $f(x)\coloneqq h(x-t)-h(x)$
	is a differentiable monster.
	
	Indeed, clearly $f$ is differentiable with $f'(x)=h'(x-t)-h'(x)$.
	Also, $f'>0$ on $t+D$, since for every $d\in D$ we have $t+d\in Z$, so that 
	$$f'(t+d)=h'(d)-h'(t+d)=h'(d)>0.$$
	Similarly, 
	$f'<0$ on $D$, since for every $d\in D$ we have $d-t\in Z$, so that 
	$f'(d)=h'(d-t)-h'(d)=-h'(d)<0$. 
\end{proof} 

\subsubsection{More examples related to calculus}
One of the first things we learn in calculus is that if for a differentiable function $\varphi\colon\R\to\R$
its derivative changes sign at a point $p$, then $\varphi$ has a (local) proper extreme value 
at $p$.

Our fist example here (see also \cite{dover2003}) shows that the change of sign of the derivative is not necessary for the existence of 
a proper extreme value 
at $p$.
For instance, this is the case for $\varphi(x)\coloneqq 2x^4 + x^2h(x)$, where $h$ is given by (\ref{D1notC1}), that is, 
\begin{equation*}
	\varphi(x)\coloneqq
	\begin{cases}
		x^4\left( 2 + \sin (x^{-1}) \right) & \mbox{ for $x\neq 0$,}\\
		0 & \mbox{ for $x=0$,}\\
	\end{cases}
\end{equation*}
see Fig.~\ref{f1}. It is easy to see that it has a proper minimum at $p=0$, while 
its derivative does no change sign at $0$. 

\begin{figure}[h!]
	\centering
	\begin{tabular}{cc}
		\includegraphics[width=0.48\textwidth]{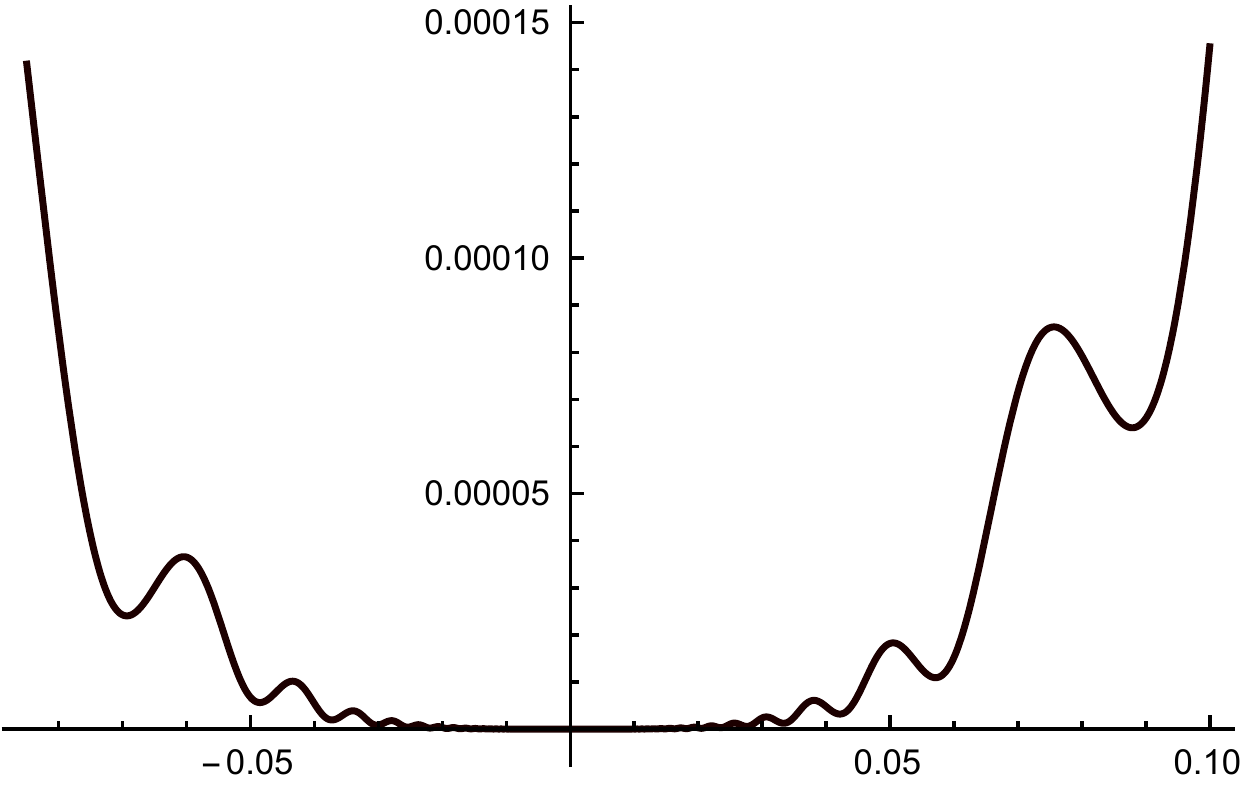} & \includegraphics[width=0.48\textwidth]{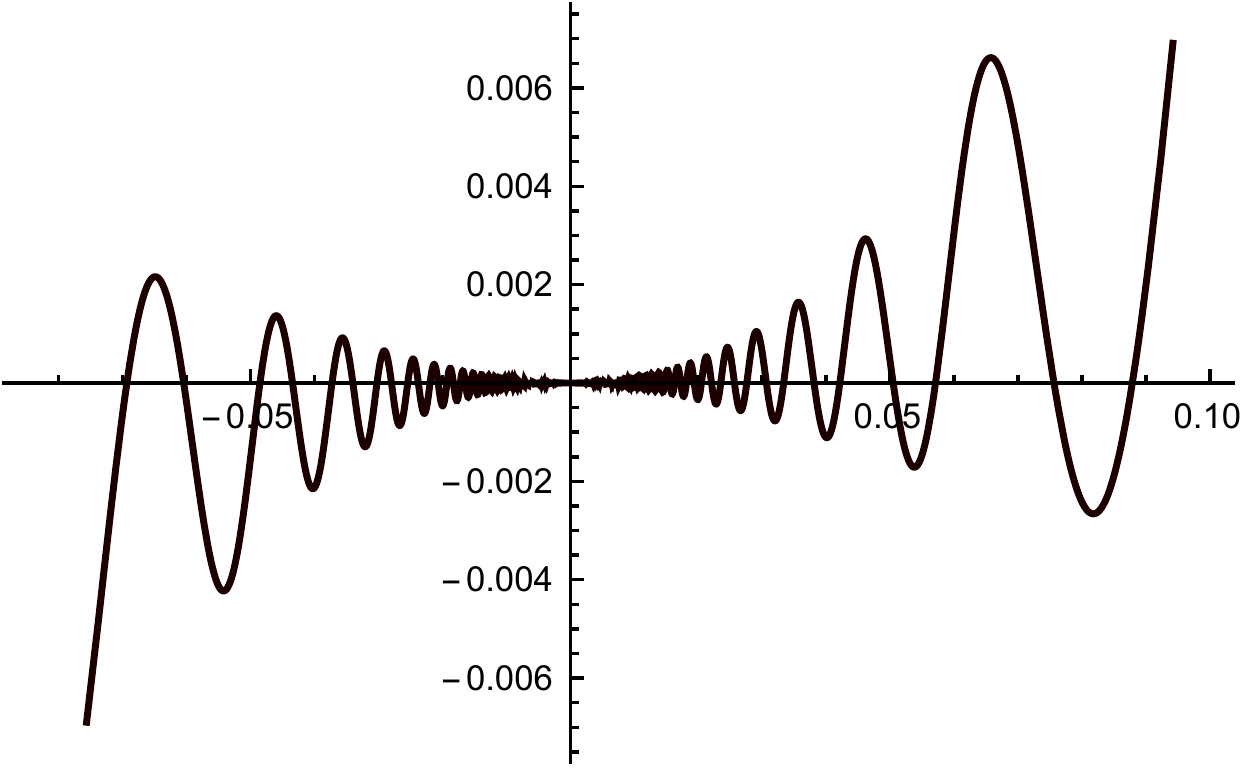}
	\end{tabular}
	\caption{The graphs of $\varphi$ and $\varphi'$.}\label{f1}
\end{figure}

The second example here (see also \cite{dover2003}) shows that a derivative of a differentiable function 
$\eta\colon\R\to\R$ need not satisfy the Extrema Value Theorem (even though it satisfies the Intermediate Value Theorem). 
For instance, this is the case for $\eta(x)\coloneqq e^{-3x}h(x)$, where $h$ is given by (\ref{D1notC1}), that is, 
\begin{equation*}
	\eta(x)\coloneqq
	\begin{cases}
		e^{-3x} x^2 \sin (x^{-1})  & \mbox{ for $x\neq 0$,}\\
		0 & \mbox{ for $x=0$,}\\
	\end{cases}
\end{equation*}
see Fig.~\ref{f3}.

The Extrema Value Theorem fails for $\eta'$, since $\inf\eta'[[0,1/3]]=-1\notin\eta'[[0,1/3]]$.
To see this, note that  
$\eta'(x)=-3e^{-3x} h(x) + e^{-3x} h'(x)$, that is,  
$\eta'(0)=0$ and
$$\eta'(x)=e^{-3x}\left(-3x^2 \sin (x^{-1})+ 2x\sin (x^{-1})- \cos (x^{-1})\right)$$ for $x\neq 0.$

In particular, 
on $(0,1/3]$, we have
$$|\eta'(x)|\leq e^{-3x}\left(3x^2 + 2x+1\right)\leq e^{-3x}\left(1 + 3x\right)<1,$$
where the last inequality holds since, by Taylor formula,  
$$e^{3x}=\sum_{i=0}^\infty \frac{(3x)^i}{i!}>1+3x.$$
This justify $-1\notin\eta'([0,1/3])$. 
As the same time, 
for $x_n\coloneqq (2\pi n)^{-1}$, we have $\lim_{n\to\infty} \eta'(x_n)=\lim_{n\to\infty} -e^{-x_n^2}=-1$, that is, 
indeed $\inf\eta'([0,1/3])=-1$.

\begin{figure}[h!]
	\centering
	\begin{tabular}{cc}
		\includegraphics[width=0.45\textwidth]{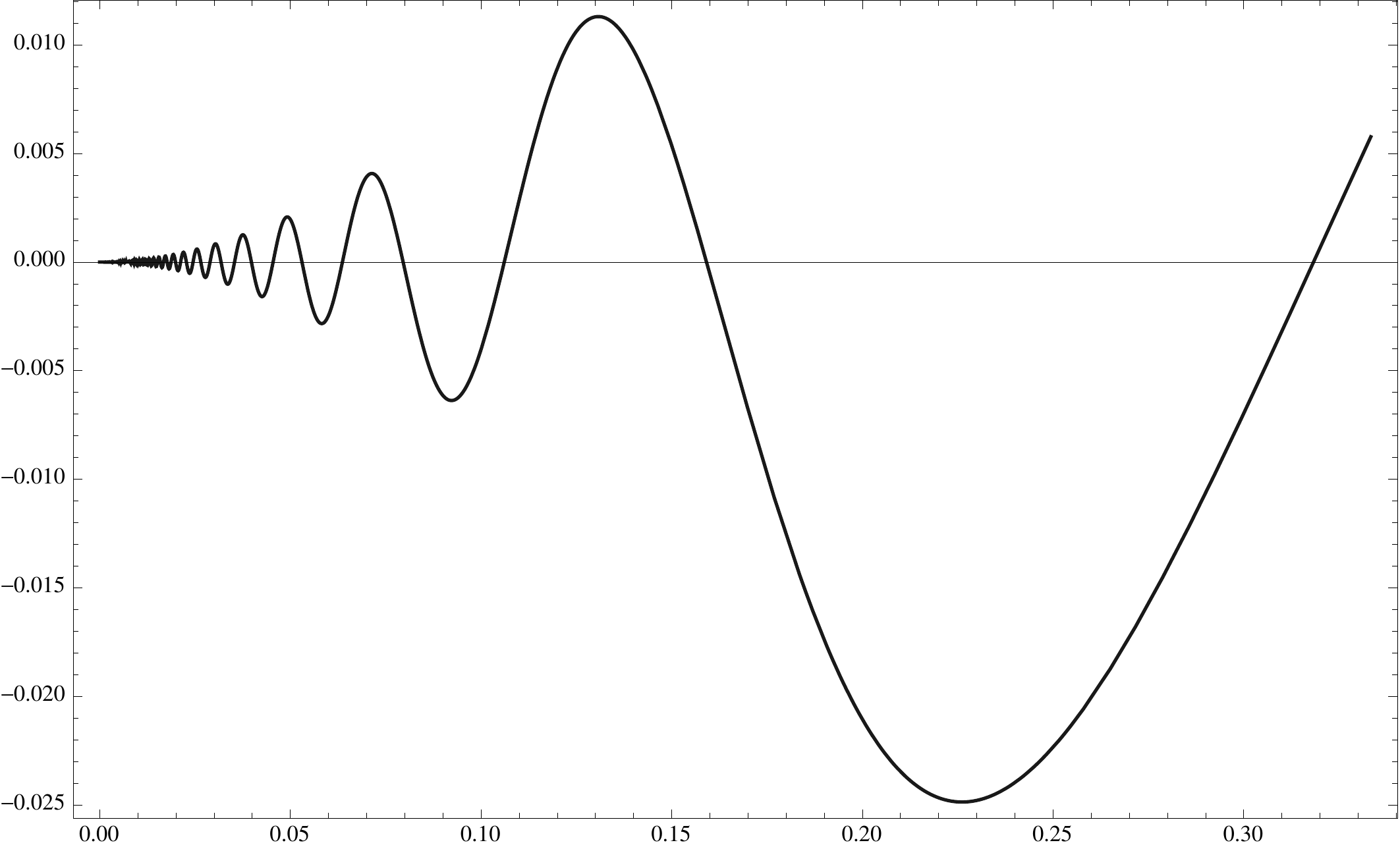} & \includegraphics[width=0.45\textwidth]{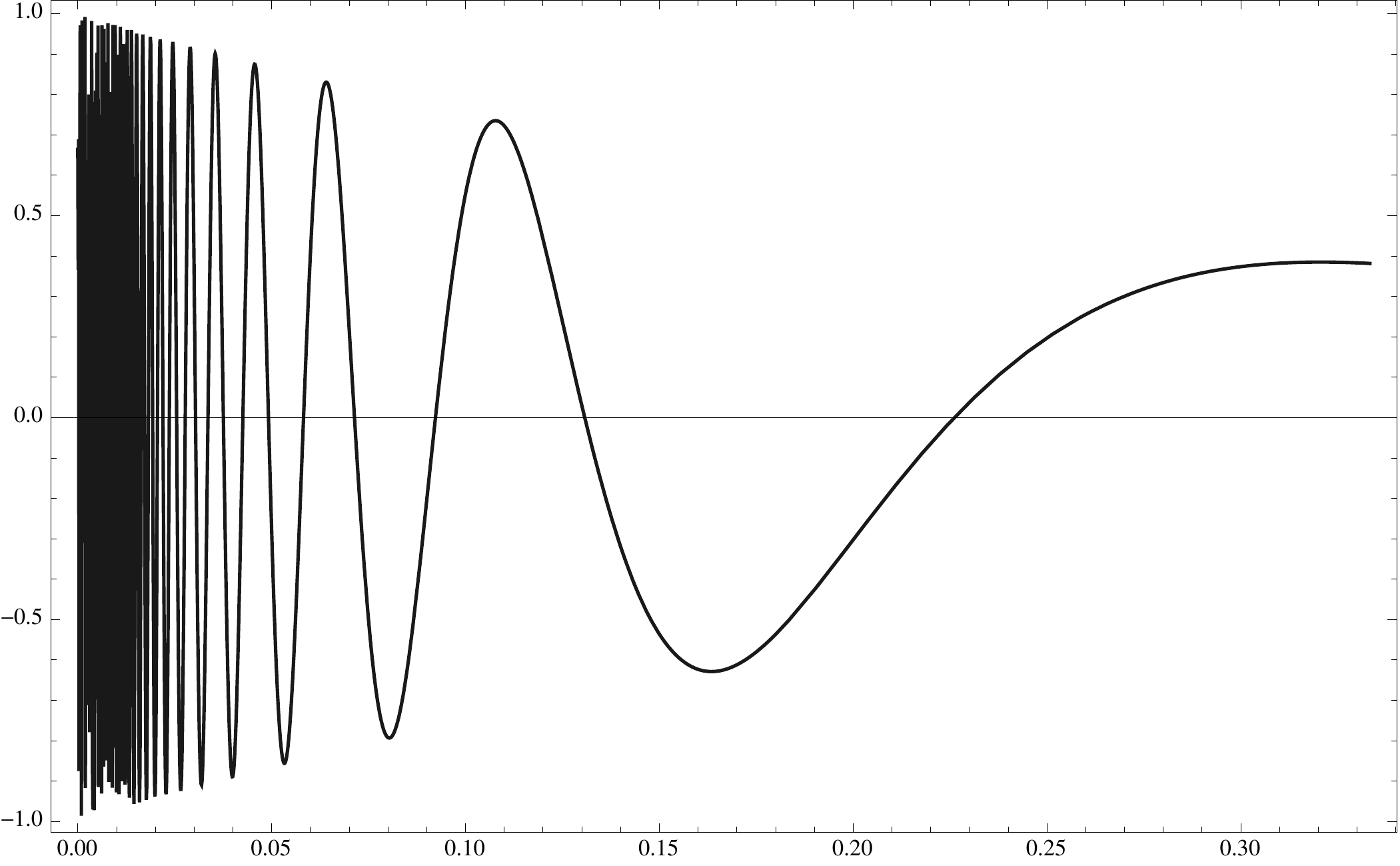}
	\end{tabular}
	\caption{Graphs of $\eta$ and $\eta'$.}\label{f3}
\end{figure}

\medskip

This last example, $\eta$, can be pushed even further, as shown by A. W. Bruckner in \cite[\S VI, Theorem 3.1]{Bru1978}:
there are derivatives defined on a closed interval that achieve no local extrema (i.e., maximum or minimum). 
However, their constructions is considerably more complicated. 
Many more \textit{pathologies} of this kind can be found in the excellent monograph \cite{dover2003}. 
See also \cite{bams2014,dover2003,amm2014,book2016,enfloseoane,carielloseoane,strangefunctions3,strangefunctions2} 
for a long list  of pathologies enjoyed 
by continuous and differentiable functions in $\mathbb{R}^\mathbb{R}$. 
 
It is appropriate to finish this section 
recalling that there is no characterization of the derivatives simpler than the trivial one: 
\begin{quote}
{\em $f$ is a derivative if, and only if, there exists an 
$F$ for which $f=F'$.} 
\end{quote}
Perhaps, a simpler characterization can never be found. 
(See, e.g., \cite{Freiling}.)

\section{Differentiability from continuity} 
\label{sec:DfromC}

In this section we will address the question 
\begin{center}
Q2: {\em How much differentiability does continuity imply?} 
\end{center}
We will argue, in spite of what the next example shows, that
actually every continuous function has some traces of differentiability. 

\begin{figure}[h!]
	\centering
	\begin{tabular}{cccc}
		\includegraphics[width=0.3\textwidth]{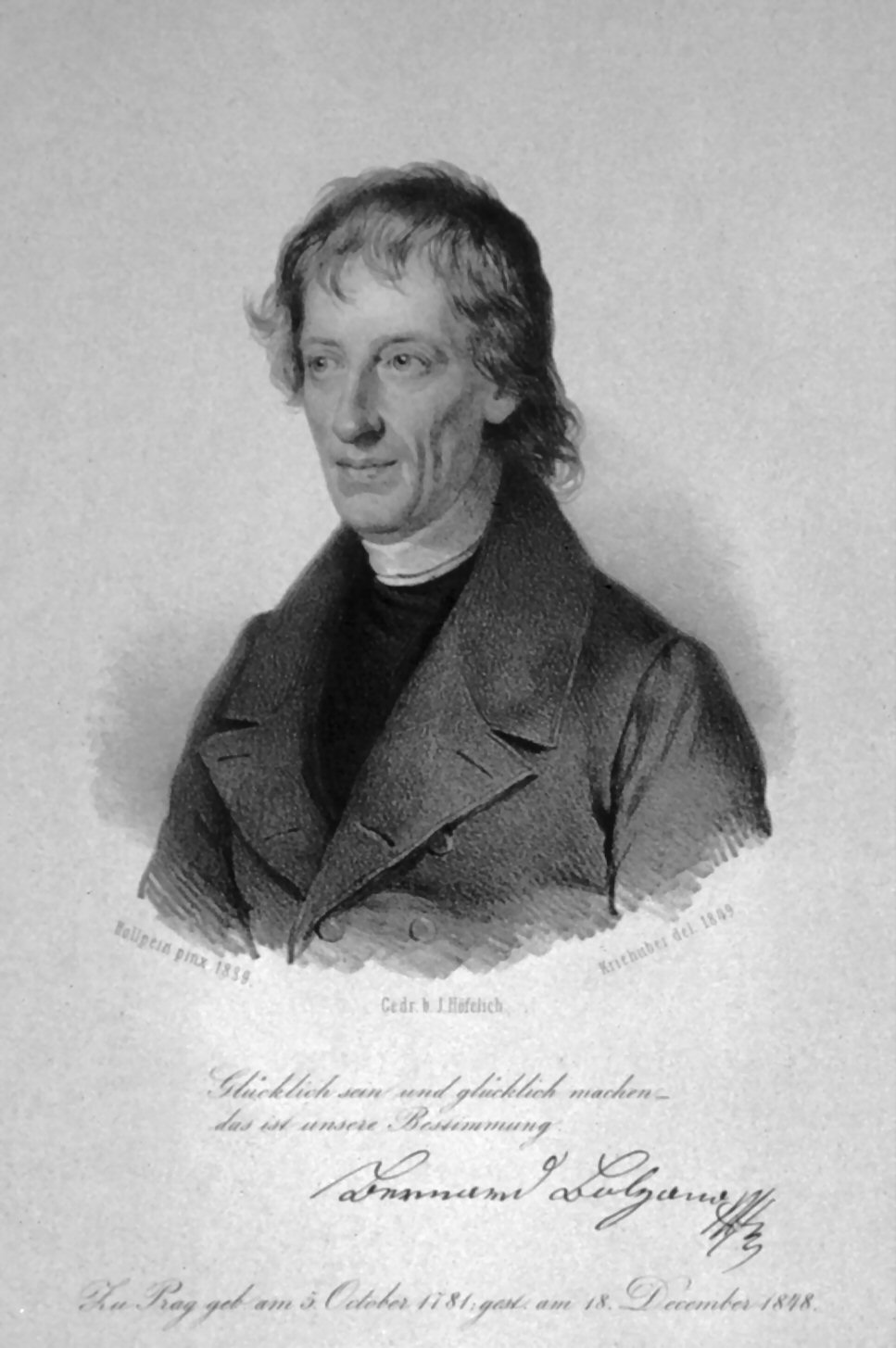} \hspace{.75cm} & \hspace{.75cm} \includegraphics[width=0.3\textwidth]{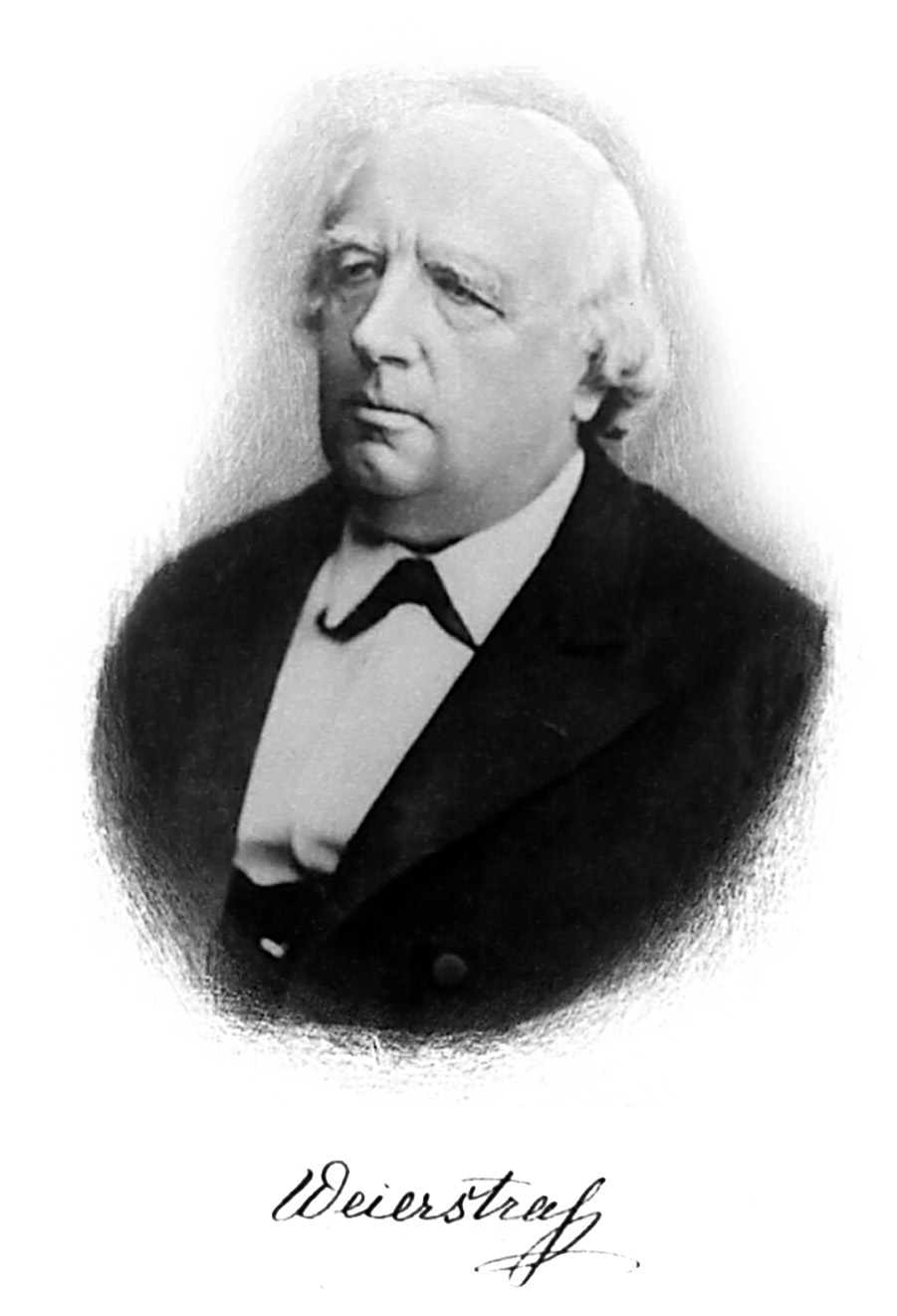}
	\end{tabular}
	\caption{Bernard Bolzano and Karl Theodor Wilhelm Weierstrass.}\label{pic_BW}
\end{figure}

\subsection{Weierstrass monsters} 
At a first glance, an answer to the question Q2 should be 
{\em none}, since there exist  
continuous functions $f\colon\R\to\R$ that are differentiable at no point $x\in \R$.

Although the first known example of this kind dates back to Bernard Bolzano (1781-1848) in 1822 (see, e.g., \cite{Bolzano1822}), the first published example of such an $f$  appeared in 1886 paper~\cite{Wei} (for the English translation see \cite{Edgar2004}) by Karl Weierstrass (1815--1897). The example, defined as
$$
W(x)\coloneqq \sum^\infty_{n=0} \frac{1}{2^n} \cos(13^{n}\pi x),
$$
was first presented by Weierstrass to Prussian Academy of Sciences in 1872. 
At that time, mathematicians commonly believed that a continuous function must have a derivative at a ``significant'' set of points.
Thus, the example came as a general {\em shock}\/  to the audience and was received with such disbelief
that  continuous nowhere differentiable functions became known as {\em Weierstrass's monsters}. 

\begin{figure}[h!]
	\centering
	\includegraphics[height=.4\textwidth,keepaspectratio=true]{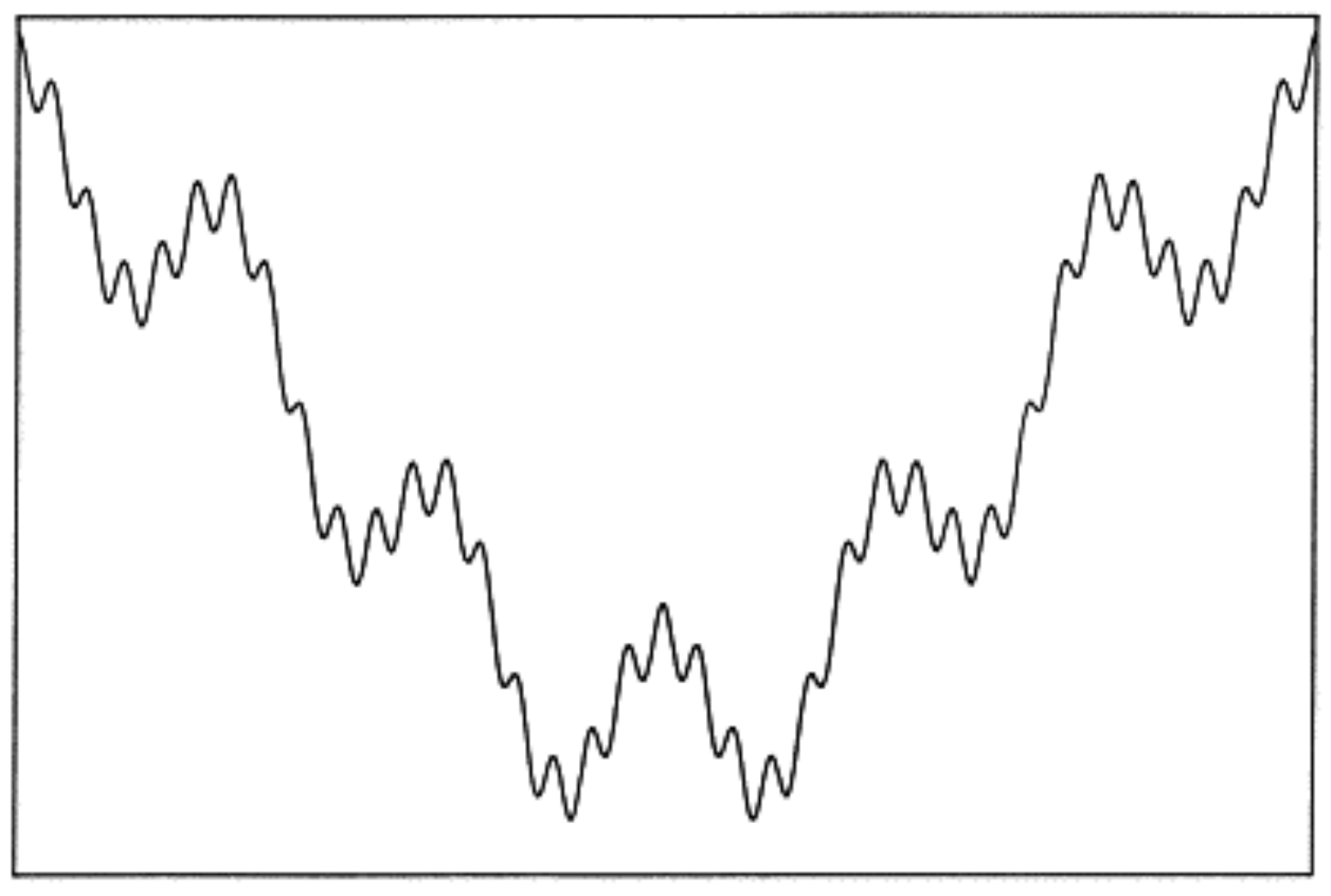}
	\caption{A sketch of the famous Weierstrass' Monster.}
	\label{figure}
\end{figure}

A large number of simple constructions of Weierstrass's monsters 
have appeared in the literature; see, for example, \cite{JP,JMSBBMS,thim2003}.
Our favorite is the following variant of a 1930 example of Bartel Leendert van der Waerden  (1903--1996)~\cite{vW}
(compare \cite[Thm. 7.18]{Rudin}), described already in a 1903
paper~\cite{Takagi} of Teiji Takagi (1875--1960), 
since the proof of its properties requires only the standard tools of one-variable calculus: $f(x)\coloneqq \sum_{n=0}^\infty 4^n f_n(x)$, where 
$f_n(x)\coloneqq \min_{k\in\Z}\left|x-\frac{k}{8^{n}}\right|$ is the distance from $x\in\R$ to the set 
$\frac{1}{8^{n}}\Z=\{\frac{k}{8^{n}}\colon k\in\Z\}$. 
(See Fig.~\ref{figure}.)
It is continuous at each 
$x\in \R$, since 
$$|f(y)-f(x)|\leq \left|\sum_{i=0}^n 4^i f_i(y)-\sum_{i=0}^n 4^i f_i(x)\right|+\frac{1}{2^{n}}$$ for every $y\in\R$ and $n\in\N$. 
It is not differentiable at any 
$x\in\R$, since for every $n\in\N$ and $k\in \Z$ with $x\in\left[\frac{k}{8^n},\frac{k+1}{8^n}\right]$
there exists a $y_n\in \left\{\frac{k}{8^n},\frac{k+1}{8^n}\right\}\setminus\{x\}$ 
such that
\begin{align*}
\left|\frac{f(x)-f(y_n)}{x-y_n}\right| & \ge \left|\frac{f(\frac{k+1}{8^n})-f(\frac{k}{8^n})}{\frac{k+1}{8^n}-\frac{k}{8^n}}\right|\\
& = 8^n \left|\sum_{i=0}^n 4^i f_i(\frac{k+1}{8^n})-\sum_{i=0}^n 4^i f_i(\frac{k}{8^n})\right|
\geq \frac 23 4^{n-1}.
\end{align*}

\begin{figure}[h!]
	\centering
	\begin{tabular}{cccc}
		\includegraphics[width=0.3\textwidth]{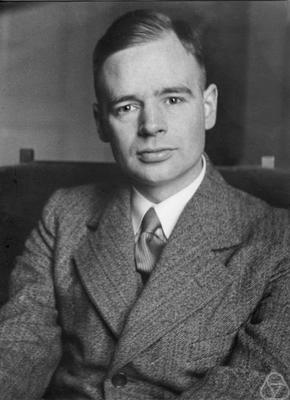} \hspace{.75cm} & \hspace{.75cm}\includegraphics[width=0.3277\textwidth]{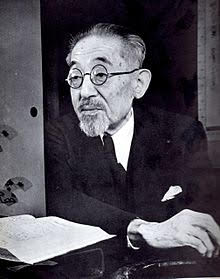}
	\end{tabular}
	\caption{Bartel Leendert van der Waerden and Teiji Takagi.}
	\label{pic_LT}
\end{figure}

\rem{Rem:W}{In fact, any typical continuous function\footnote{Term {\em typical}\/ is used the Baire category sense, that is, it holds for 
every map from a co-meager set $G\subset C([a,b])$ of functions.} 
is a Weierstrass monster, see e.g. \cite{Munk}. 
Actually, a typical
continuous function 
agrees with continuum many constant functions on perfect sets. Also, in \cite{BrucknerGarg}, the authors studied the structure of the sets in which the graphs of a residual set of continuous functions intersect with different straight lines. They showed that any {\em typical} continuous functions $f$ has a certain structure with respect to intersections with straight lines. In particular, all but at most two of the constant functions that agree anywhere with $f$, agree on uncountable sets (and the same also holds with lines in all non-vertical directions).}

\subsection{Lipschitz and differentiable restrictions}

The goal of this section is to prove the following 1984 theorem
of Mikl\' os Laczkovich \mbox{(1948--)} from~\cite{La1984}. 

\thm{thML}{For every continuous $f\colon \R\to\R$ there is a perfect 
set $Q\subset\R$ such that $f\restriction Q$ is differentiable.}

In the statement of the theorem, 
the differentiability of $h\coloneqq f\restriction Q$ is understood as existence of its derivative: 
the function $h'\colon Q\to\real$ where 
$$
h'(p)\coloneqq\lim_{x\to p,\ x\in Q}\frac{h(x)-h(p)}{x-p}
$$
for every $p\in Q$.
The proof of this theorem presented below comes from~\cite{KCprep2017}.
Its advantage over the original proof is that none of its steps require the tools of Lebesgue measure/integration theory. 

Before we turn our attention to the proof, we like to make two remarks, which lay outside of the main narrative here
(as they use Lebesgue measure tools), but give a better perspective on Theorem~\ref{thML}.
The first one is a more general version of the theorem, in the form presented in the original paper~\cite{La1984}.

\rem{rem1}{For every $E\subset \R$ of positive Lebesgue measure and every 
	continuous function $f\colon E\to\R$ there exists a perfect $Q\subset E$ such that $f\restriction Q$ is differentiable.
}

It is worth to note, that the proof we present below can be easily adapted to deduce also the statement from Remark~\ref{rem1}. 

The next remark shows that, in general, we cannot expect that the set $Q$ in Theorem~\ref{thML} can be either of positive Lebesgue
measure or of second category. 

\rem{rem2}{There exists a continuous function $f\colon \R\to\R$ such that $f\restriction Q$ can be differentiable only when
	$Q$ is both first category and of Lebesgue measure 0. 
}

A function $f\colon [0,1]\to\R$ with such property can be chosen as one of the coordinates of the classical 
Peano curve that maps continuously $[0,1]$ onto $[0,1]^2$. Indeed, such coordinates are nowhere approximately and $\mathcal I$-approximately differentiable,
as proved in 1989 by the first author, 
Lee Larson, and Krzysztof Ostaszewski~\cite{CLO1990}. (See also \cite[example 1.4.5]{CLObook}.)
Clearly, such $f$ cannot have a differentiable restriction to a set $Q$ of positive measure (second category)
since then $f$ would be approximately ($\mathcal I$-approximately) differentiable
at any density ($\mathcal I$-density) point of $Q$.

The main tool in the presented proof of Theorem~\ref{thML} is the following result, which is of independent interest. 
Notice that Theorem~\ref{thm:L} follows also easily from the interpolation 
Theorem~\ref{Thm:Inter}, whose original proof was not using 
Theorem~\ref{thm:L}.\footnote{If $g\colon\R\to\R$ is a $C^1$ function from Theorem~\ref{Thm:Inter}, then,
	by the Mean Value Theorem, its restriction $g\restriction [-M,M]$ is Lipschitz for every $M>0$.
	Thus, for some $M>0$, the set $P\coloneqq[f=g]\cap[-M,M]$ is perfect non-empty
	and $f\restriction P=g\restriction P$ is Lipschitz. 
}
However, our proof of Theorem~\ref{Thm:Inter} uses Theorem~\ref{thm:L}.

\thm{thm:L}{Assume that $f\colon\R\to\R$ is monotone and continuous on a non-trivial interval $[a,b]$.
	For every $L> \left| \frac{f(b)-f(a)}{b-a}\right|$ there exists a closed uncountable 
	set $P\subset [a,b]$ such that $f\restriction P$ is Lipschitz with constant $L$.
}

The difficulty in proving Theorem~\ref{thm:L} without measure theoretical tools comes from the fact
that there exist functions like Pompeiu map from Proposition~\ref{pr:Pom}: 
strictly increasing continuous maps $f\colon\R\to\R$
which posses finite or infinite derivative at every point, but such that the derivative of $f$ is infinite on a dense $G_\delta$-set.
These examples show that a perfect set in Theorem~\ref{thm:L} should be nowhere sense. 
Thus, we will use a measure theoretical approach, in which the measure theoretical tools will be present only implicitly
or, as in case of Fact~\ref{fact1}, given together with a simple proof. 

\begin{figure}[h!]
	\centering
	\begin{tabular}{cc}
		\includegraphics[width=0.37\textwidth]{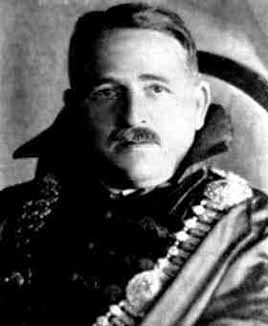} \hspace{.3cm} & \hspace{.3cm} \includegraphics[width=0.3\textwidth]{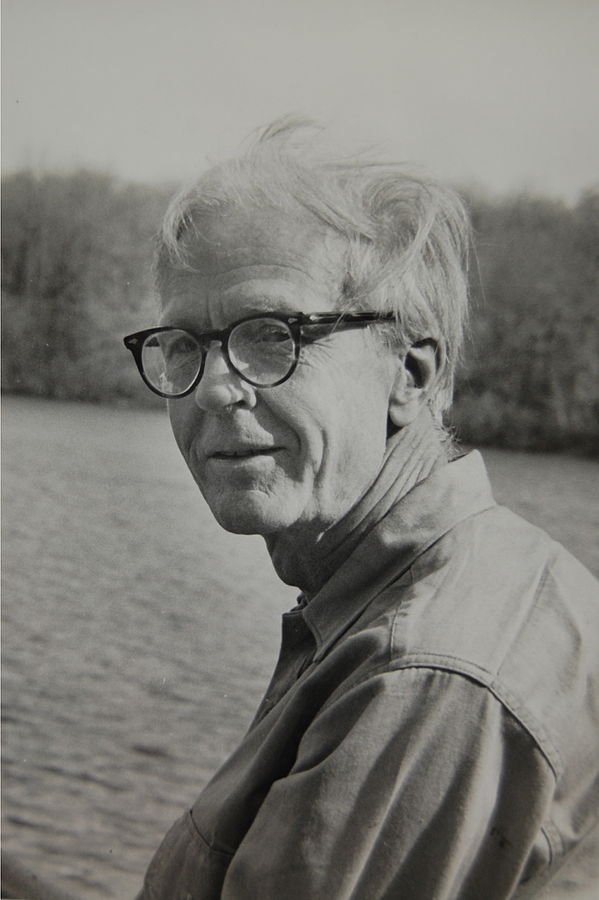}
	\end{tabular}
	\caption{Frigyes Riesz and Hassler Whitney.}
	\label{pic_RW}
\end{figure}

\begin{figure}[h!]
	\centering
	\includegraphics[width=.6\textwidth]{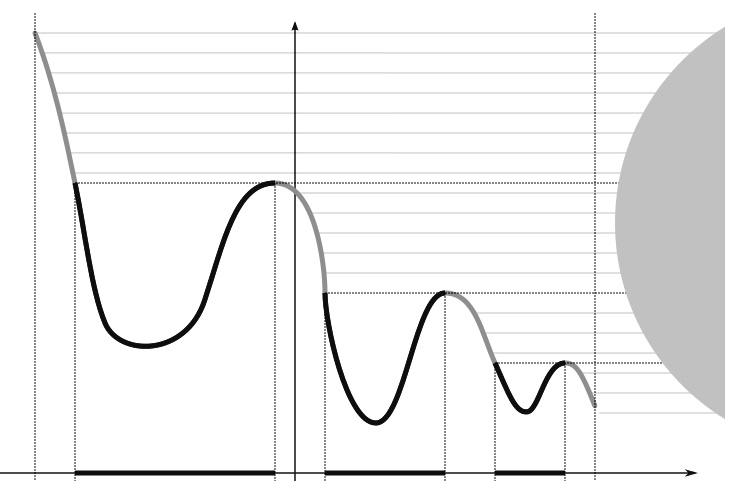}
	\caption{Illustration of the Rising Sun Lemma of Frigyes Riesz (Lemma \ref{RSL}). The colorful name of this lemma comes from imagining the graph of the function as a mountainous landscape, having the sun shining horizontally from the right. The points in the set $U \cap (a,b)$ are those lying in the shadow.}\label{rising-sun}
\end{figure}

The presented proof of Theorem~\ref{thm:L} is extracted from a proof of a Lebesgue theorem
that every monotone function $f\colon\R\to\R$ is differentiable almost everywhere.
It is based on the following 1932 result of Frigyes Riesz (1880-1956) from~\cite{Riesz}, known as 
the rising sun lemma, see Fig. \ref{rising-sun}. 
See also \cite{Tao}.

\lem{lem2}{If $g$ is a continuous function from a non-trivial interval $[a,b]$ into $\R$, 
	then the set 
	\mbox{$U\coloneqq\{x\in[a,b)\colon g(x) < g(y) \mbox{ for some }y \in (x,b]\}$} is open in $[a,b)$ and
	$g(c)\leq g(d)$ for every connected component  $(c,d)$ of $U$. \label{RSL}
}

\begin{proof}
	It is clear that $U$ is open in $[a,b)$. To see the other part, let $(c,d)$ be a component of $U$. 
	By continuity of $g$, 
	it is enough to prove that $g(p)\leq g(d)$ for every $p\in(c,d)$.
	Assume by way of contradiction that $g(d)<g(p)$ for some $p\in(c,d)$
	and let $x\in [p,b]$ be a point at which $g\restriction [p,b]$ achieves the maximum.
	Then $g(d)<g(p)\leq g(x)$ and so we must have
	$x\in [p,d)\subset U$, as otherwise $d$ would belong to $U$. 
	But $x\in U$ contradicts the fact that $g(x)\geq g(y)$ for every $y\in (x,b]$.
\end{proof}

For an interval $I$ let $\ell(I)$ be its length. We need 
the following simple well-known observations. 

\begin{Fact}\label{fact1}
Let $a<b$ and $\J$ be a family of open intervals with $\bigcup\J\subset(a,b)$. 
	\begin{itemize}
		\item[(i)] If $[\alpha,\beta]\subset\bigcup\J$, then  $\sum_{I\in\J} \ell(I)> \beta-\alpha$.
		\item[(ii)] If the intervals in $\J$ 
		are pairwise disjoint, 
		then $\sum_{I\in\J} \ell(I)\leq b-a$.
	\end{itemize}
\end{Fact}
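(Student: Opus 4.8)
The plan is to reduce both parts to statements about \emph{finite} families of intervals. This is legitimate because the (possibly infinite) sum $\sum_{I\in\J}\ell(I)$ of nonnegative terms equals the supremum of the finite partial sums $\sum_{I\in\F}\ell(I)$ taken over all finite $\F\subseteq\J$. Once finiteness is in hand, each part becomes an elementary one-dimensional covering argument.

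For (i), I would first invoke compactness of $[\alpha,\beta]$: since $[\alpha,\beta]\subseteq\bigcup\J$ and the members of $\J$ are open, there is a finite $\F=\{I_1,\dots,I_m\}\subseteq\J$ with $[\alpha,\beta]\subseteq\bigcup\F$, and it suffices to prove $\sum_{k}\ell(I_k)>\beta-\alpha$. I would then build a chain running through $[\alpha,\beta]$: pick $I_{k_1}=(a_1,b_1)$ containing $\alpha$, so $a_1<\alpha$; if $b_1\le\beta$ then $b_1\in[\alpha,\beta]\setminus I_{k_1}$ lies in some $I_{k_2}=(a_2,b_2)$ with $a_2<b_1<b_2$, and I repeat. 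Since $\F$ is finite and the right endpoints strictly increase, the process terminates at some $I_{k_n}=(a_n,b_n)$ with $b_n>\beta$. Writing the lengths as a telescoping sum,
\begin{equation*}
\sum_{j=1}^n\ell(I_{k_j})=(b_n-a_1)+\sum_{j=1}^{n-1}(b_j-a_{j+1})>b_n-a_1>\beta-\alpha,
\end{equation*}
where each overlap $b_j-a_{j+1}$ is positive by construction and $b_n-a_1>\beta-\alpha$ since $b_n>\beta$ and $a_1<\alpha$. As this chain is a subfamily of $\J$, the full sum is at least this large, yielding the strict inequality.

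For (ii), I would fix any finite $\F=\{I_1,\dots,I_m\}\subseteq\J$; it is enough to show $\sum_k\ell(I_k)\le b-a$ and then pass to the supremum over such $\F$. Since the intervals are pairwise disjoint open intervals contained in $(a,b)$, I would order them so that their left endpoints increase. Disjointness then forces them to be laid out left to right without overlap, so the right endpoint of each is at most the left endpoint of the next while all endpoints lie in $[a,b]$; summing the lengths telescopes to at most $b-a$.

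Both arguments are elementary; the only real care is in part (i), where one must produce the finite chain and track a \emph{strict} inequality rather than a merely non-strict one. The strictness comes precisely from the open intervals overshooting $\alpha$ on the left and $\beta$ on the right, so that $b_n-a_1>\beta-\alpha$. Part (ii) presents no genuine obstacle beyond setting up the ordering of the disjoint intervals cleanly.
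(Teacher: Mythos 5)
Your proposal is correct and takes essentially the same approach as the paper: both reduce to finite subfamilies (via compactness of $[\alpha,\beta]$ in (i) and the supremum of finite partial sums in (ii)) and then do elementary endpoint bookkeeping, with the paper organizing that bookkeeping as an induction on the size of the family while you unroll it into an explicit chain/ordering with telescoping sums---a purely cosmetic difference. One trivial blemish: in (i) your first inequality $\sum_{j=1}^n\ell(I_{k_j})>b_n-a_1$ should read $\geq$ when $n=1$ (the overlap sum is then empty), but since $b_n-a_1>\beta-\alpha$ is strict, the conclusion is unaffected.
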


\begin{proof}
	(i): By compactness of $[\alpha,\beta]$ we can assume that $\J$ is finite, say of 
	size $n$. 
	Then (i) follows by an easy induction on $n$: if $(c,d)=J\in \J$ contains $\beta$, then either $c\leq \alpha$,
	in which case (i) is obvious, or $\alpha<c$ and, by induction,
	$\sum_{I\in\J} \ell(I)=\ell(J)+\sum_{I\in\J\setminus\{J\}} \ell(I)> \ell([c,\beta])+\ell([\alpha,c])=\beta-\alpha$.
	
	(ii): Once again, it is enough to show (ii) for finite $\J$, say of size $n$, by induction. 
	Then, there is $(c,d)=J\in\J$ to the right of any $I\in \J\setminus\{J\}$.
	So, by induction, 
	$\sum_{I\in\J} \ell(I)=\ell(J)+\sum_{I\in\J\setminus\{J\}} \ell(I)\leq (b-c)+(c-a)=b-a$.
\end{proof}

\begin{proof}[Proof of Theorem~\ref{thm:L}] 
	If there exists a nontrivial interval $[c,d]\subset [a,b]$ on which
	$f$ is constant, then clearly $P\coloneqq [c,d]$ is as needed. 
	Thus, we can assume that $f$ is strictly monotone on $[a,b]$.  
	Also, replacing $f$ with $-f$, if necessary, we can also assume that $f$ 
	is strictly increasing. 
	
Fix  $L>|q(a,b)|=\frac{f(b)-f(a)}{b-a}$ and define $g\colon \R\to\R$ as $g(t)\coloneqq f(t)-Lt$. 
Then $g(a)=f(a)-La>f(b)-Lb=g(b)$.

Let $m\coloneqq \sup\{g(x)\colon x\in[a,b]\}$ 
and $\bar a\coloneqq\sup\{x\in[a,b]\colon g(x)=m\}$.
Then $f(\bar a)-L\bar a=g(\bar a)\geq g(a)>g(b)=f(b)-Lb$, 
so $a\leq \bar a<b$ and we still have $L>|q(\bar a,b)|=\frac{f(b)-f(\bar a)}{b-\bar a}$.
Moreover, $\bar a$ does not belong to the set 
	\[
	U\coloneqq\{x\in[\bar a,b)\colon g(y)> g(x) \mbox{ for some }y \in (x,b]\}
	\]
	from Lemma~\ref{lem2} applied to $g$ on $[\bar a,b]$. 
	In particular, $U$ is open in $\R$
	and the family $\J$ of all connected components of $U$
	contains only open intervals $(c,d)$ for which, by Lemma~\ref{lem2}, $g(c)\leq g(d)$. 
	
	The set $P\coloneqq[\bar a, b]\setminus U\subset[a,b]$ is closed and for any $x<y$ in $P$
	we have $f(y)-Ly=g(y)\leq g(x)=f(x)-Lx$, that is, $|f(y)-f(x)|=f(y)-f(x)\leq Ly-Lx=L|y-x|$. 
	In particular, $f$ is Lipschitz on $P$ with constant $L$. 
	It is enough to show that $P$ is uncountable.
	
	In order to see this notice that for every $J\coloneqq(c,d)\in\J$ we have $f(d)-Ld=g(d)\geq  g(c)=f(c)-Lc$, that is,
	$\ell(f[J])=f(d)-f(c)\geq L(d-c)=L\ell(J)$. 
	Since the intervals in the family \mbox{$\J^*\coloneqq\{f[J]\colon \J\in\J\}$} 
	are pairwise disjoint and contained in the interval $(f(\bar a),f(b))$, by Fact~\ref{fact1}(ii) we have 
	$\sum_{J^*\in\J^*} \ell(J^*)\leq f(b)-f(\bar a)$.
	Thus,
	$$\sum_{J\in\J} \ell(J)\leq \frac{1}{L}\sum_{J\in\J} \ell(f[J])= \frac{1}{L}
	\sum_{J^*\in\J^*} \ell(J^*)\leq \frac{f(b)-f(\bar a)}{L}<b-\bar a.$$
	Therefore, by Fact~\ref{fact1}(i),
	$P\coloneqq[\bar a,b]\setminus U=[\bar a,b]\setminus\bigcup\J\neq\emptyset$. 
	However, we need more, that $P$ cannot be contained in any countable set, say $\{x_n\colon n\in\N\}$.
	To see this, fix $\delta>0$ such that $\frac{f(b)-f(\bar a)}{L}+\delta<b-\bar a$, 
	for every $n\in \N$ choose an interval $(c_n,d_n)\ni x_n$ of length $2^{-n}\delta$,
	and put $\hat\J\coloneqq\J\cup\{(c_n,d_n)\colon n<\omega\}$.
	Then  
	$$
	\sum_{J\in\hat\J} \ell(J)=\sum_{J\in\J} \ell(J)+\sum_{n\in\N} \ell((c_n,d_n))\leq\frac{f(b)-f(\bar a)}{L}+\delta<\beta-\alpha
	$$
	so, by Fact~\ref{fact1}(i), 
	$U\cup\bigcup_{n\in\N}(c_n,d_n)\supset U\cup \{x_n\colon n\in\N\}$ does not contain $[\bar a,b]$.
	In other words, $P=[\bar a,b]\setminus U$ is uncountable, as needed. 
\end{proof}

The last step needed in the proof of Theorem~\ref{thML} is the following proposition, in which 
$\Delta\coloneqq\{\la x,x\ra\colon x\in\R\}$. 

\prop{propMain}{For every continuous 
	$f\colon\R\to\R$
	there exists a perfect set $Q\subset\R$ such that the quotient map $q\colon\R^2\setminus \Delta\to \R$,
	$q(x,y)\coloneqq\frac{f(x)-f(y)}{x-y}$, restricted to 
	$Q^2\setminus \Delta$ 
	is bounded and uniformly continuous. 
}

\begin{proof}
	If $f$ is monotone on some non-trivial interval $[a,b]$,
	then, by Theorem~\ref{thm:L}, there exists a perfect set $P\subset\R$ such that
	$f\restriction P$ is Lipschitz with some constant $L\in[0,\infty)$.
	In particular, the values of $q\restriction P^2\setminus\Delta$ are in the bounded interval $[-L,L]$.
	Therefore, 
	by a theorem of Micha{\l} Morayne \mbox{(1958--)} from \cite{Morayne85} applied to $F\coloneqq q\restriction P^2\setminus\Delta$,
	there exists a perfect $Q\subset P$ for which $F\restriction Q^2\setminus \Delta$ is uniformly continuous. 
	In such case, $q\restriction Q^2\setminus \Delta=F\restriction Q^2\setminus \Delta$ is clearly 
	bounded and uniformly continuous.
	
	On the other hand, if 
	$f$ is monotone on no non-trivial interval, then, by a 1953 theorem of Komarath Padmavally~\cite{Padm}
	(compare also \cite{Mina1940,Markus1958,Grag1963}),
	there exists 
	a perfect set $Q\subset\R$ on which $f$ is constant. 
	Of course, the quotient map on such $Q$ is as desired.
\end{proof}

Note that the results from papers~\cite{Padm} and~\cite{Morayne85}, which we used above,  
have simple topological proofs that  do not
require any measure theoretical tools. 

\begin{proof}[Proof of Theorem~\ref{thML}] 
	Let $Q\subset\R$ be as in Proposition~\ref{propMain}.
	Then the uniformly continuous $q\restriction Q^2\setminus \Delta$ can be extended
	to the uniformly continuous $\bar q$ on $Q^2$.
	Therefore, 
	for every $x\in Q$, the limit 
	$$\lim_{y\to x, y\in Q} \frac{f(y)-f(x)}{y-x}=	\lim_{y\to x, y\in Q} q(y,x)=\bar q(x,y)$$
	is well defined and equal to the derivative of $f\restriction Q$ at $x$. 
\end{proof}

\subsection{Differentiable extension: Jarn\'\i k and Whitney theorems} 

By Theorem~\ref{thML}, any continuous functions $f\colon \R\to\R$ has a differentiable restriction to a perfect set.
In the next subsection we will prove an even stronger theorem, that there always exists
a $C^1$ function $g\colon \R\to\R$ which agrees with $f$ on an uncountable set. 
The main tool proving this result is the Whitney's $C^1$ extension theorem, proved
in the 1934 paper~\cite{Wh} of Hassler Whitney (1907-1989). 
We will use this result in form of Theorem~\ref{thmMC&KC}  from a
recent paper of Monika Ciesielska and the first author~\cite{MC&KC} (check also \cite{KCprep2017}), 
since we can sketch here its elementary proof. 
(See also \cite{KC&JS2018} for another simple proof of the Whitney's $C^1$ extension theorem.)
To state it, we need the following notation. 
For a bounded open interval $J$ let $I_J$ be the closed middle third of $J$
and, given a perfect set $Q\subset \R$, we let 
\[
\hat Q\coloneqq Q\cup\bigcup\left\{I_J\colon J \mbox{ is a bounded connected component of } \R\setminus Q\right\}.
\]

\thm{thmMC&KC}{Let $f\colon Q\to\R$, where $Q$ is a perfect subset of $\R$.
	\begin{itemize}
		\item[(a)] If $f$ is differentiable, then there exists a differentiable extension $F\colon\R\to\R$ of $f$.
		\item[(b)] If  $\hat f\coloneqq\bar f\restriction \hat Q$, where $\bar f\colon \R\to\R$ is a linear interpolation of $f$, then
		$\hat f$ is differentiable and 
		\[
		\mbox{$f$ admits a $C^1$ extension if, and only if, $\hat f$ is continuously differentiable.
		}
		\]
	\end{itemize}
}

Part (b) of Theorem~\ref{thmMC&KC} gives a criteria, in term of $\hat f$, on admission of a $C^1$ extension of $f$.
In particular, it can be viewed as a version of the ($C^1$-part of) Whitney's extension theorem
(for functions of one variable). 

\begin{figure}[h!]
	\centering
		\includegraphics[width=0.35\textwidth]{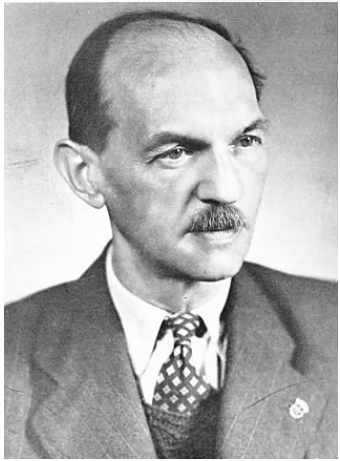}
	\caption{Vojt\v ech Jarn\'\i k.}
	\label{pic_jarnik}
\end{figure}

Part (a) of Theorem~\ref{thmMC&KC} has long and interesting story. 
It first appeared in print in a 1923 paper~\cite{Jarnik} of Vojt\v ech Jarn\'\i k (1897--1970), for the case when $Q\subset\R$ is 
compact. Unfortunately, \cite{Jarnik} appeared in the not so well known journal 
{\em Bull. Internat. de l'Acad\' emie des Sciences de Boh\^eme},
was written in French, and it only sketched the details of the construction.
A more complete version of the proof, that appeared in 
\cite{JarnikCzech}, was written in Czech and was even less readily accessible.
Therefore, this result of Jarn\'\i k was unnoticed by the mathematical community
until the mid 1980's. 
Theorem~\ref{thmMC&KC}(a) was rediscovered by Gy\" orgy Petruska (1941--) and Mikl\'os Laczkovich 
and published in 1974 paper~\cite{PeLa}.
Its proof in~\cite{PeLa} is quite intricate and embedded in a deeper, more general research.
A simpler proof of the theorem appeared in a 1984 paper~\cite{Marik} by Jan Ma\v r\'\i k (1920--1994);  however, it is considerably more complicated  than the one we presented below and it employs Lebesgue integration tools. 
Apparently, the authors of neither~\cite{PeLa} nor~\cite{Marik} had been aware of Jarn\'\i k's paper~\cite{Jarnik}
at the time of publication of their articles. 
However~\cite{Jarnik} is cited in 1985 paper~\cite{ALP} 
that discusses multivariable version of Theorem~\ref{thmMC&KC}(a).
Also, two recent papers \cite{NZ,Koc} that address generalizations of Theorem~\ref{thmMC&KC}(a) mention~\cite{Jarnik}.
(See also \cite{KocKolar2016,KocKolar2017}). 

\begin{figure}[h!]
	\begin{center}
		\includegraphics[width=10.0cm]{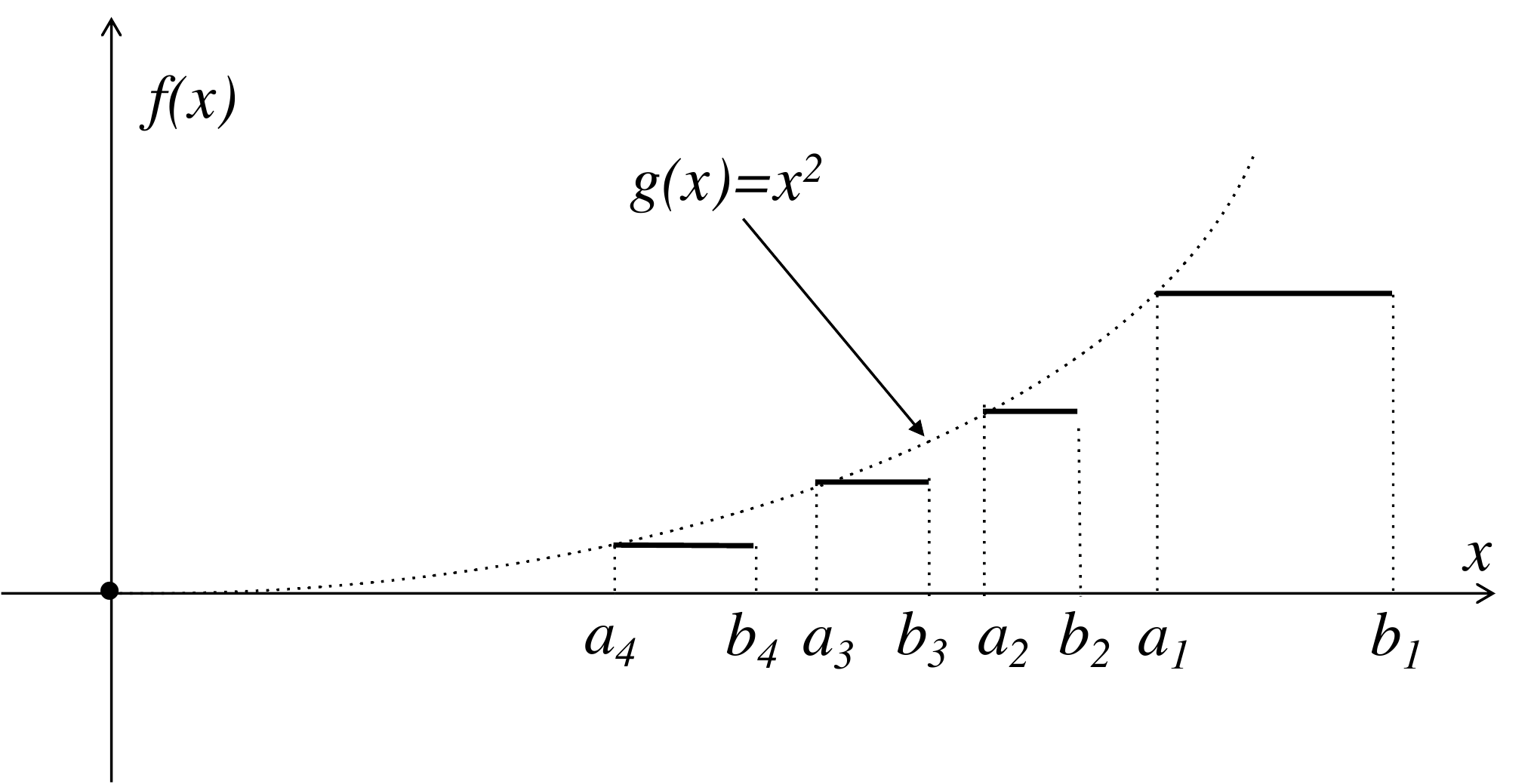}
	\end{center}
	\caption{Graph of 
	$f\colon P\to\R$, horizontal thick segments, with $f'=0$ on $P$.
		No differentiable extension $F\colon\R\to\R$ of $f$ has continuous derivatives,
		unless $\displaystyle \frac{f(a_n)-f(b_{n+1})}{a_n-b_{n+1}}\stackrel{n\to\infty}{\longrightarrow} 0$. 
	}
	\label{3Fig}
\end{figure}

	\begin{figure}[h!]
		\begin{center}
			\includegraphics[width=9.0cm]{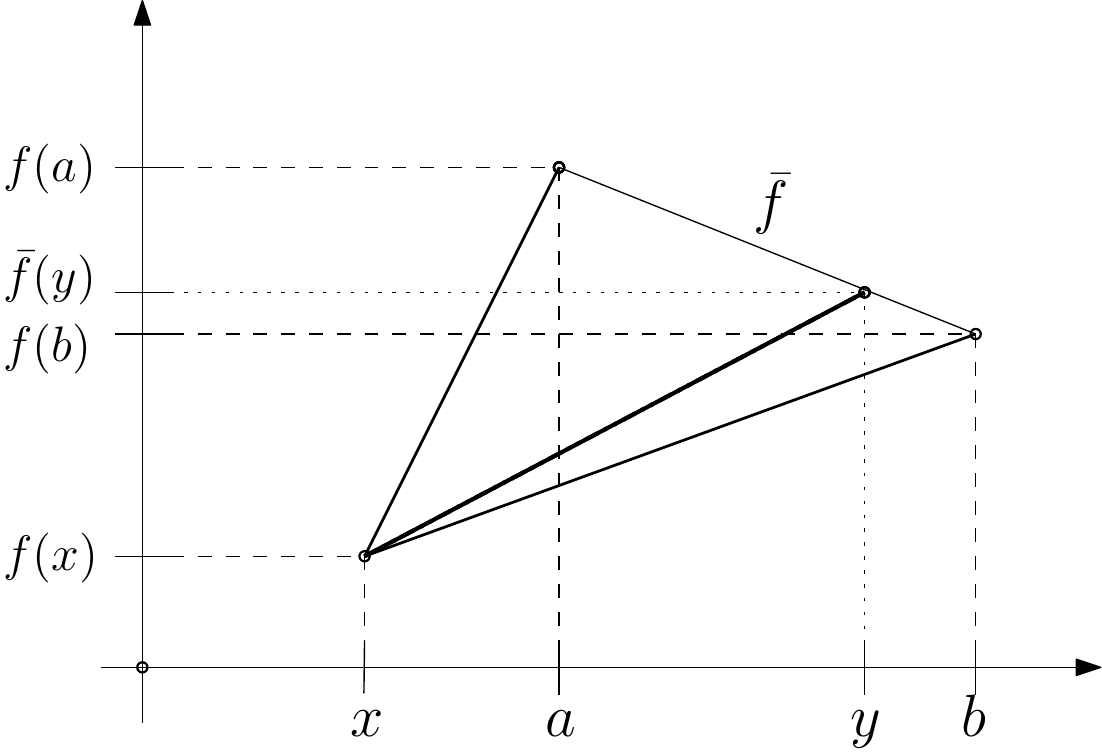}
		\end{center}
		\caption{Illustrating the situation presented in statement \eqref{eq:Est1Diff}.}
	\end{figure}

It is well known and easy to see that 
function $f$ from Jarn\'\i k's theorem need not to admit $C^1$  extension, 
even when $f'$ is constant. See, for example, a map from Fig.~\ref{3Fig}.

Notice also, that Theorem~\ref{thmMC&KC}(a) is actually true for $Q$ being any arbitrary closed subset of $\R$.
Such a version was proved in all cited papers on the subject that appeared after the original works by Jarn\'\i k.
We skip such generality in order to avoid a problem of defining the notion of the derivative for the set with isolated points as well as 
some additional technical issues. But the general theorem can be easily deduced from the version
from Theorem~\ref{thmMC&KC}(a), since for any differentiable function $f$ on a closed subset $P$ of $\R$
there exists a perfect set $Q$ containing $P$ such that a linear interpolation of $f$
restricted to $Q$ is differentiable.

\begin{proof}[Sketch of proof of Theorem~\ref{thmMC&KC}] 
	Extending slightly $f$, if necessary, we can assume that 
	the perfect set $Q$ is unbounded from both sides. 
	Then a linear interpolation $\bar f\colon\R\to\R$ of $f$  is uniquely determined. 
	
	First notice that the unilateral (i.e., one sided) derivatives $D^-\bar f$
	and $D^+\bar f$ of $\bar f$ exists at every point. 
	Indeed, $D^+\bar f(x)$ and $D^-\bar f(x)$ clearly exist for every $x\in \R\setminus Q$. 
	They also exist for every $x\in  Q$, since for every component $I\coloneqq(a,b)$ of $\R\setminus Q$
	with $x\notin[a,b]$, we have that
	\begin{equation}\label{eq:Est1Diff} 
	\frac{|\bar f(y)-\bar f(x)|}{y-x} \text{ lies between } \frac{|f(a)-f(x)|}{a-x} \text{ and } \frac{|f(b)-f(x)|}{b-x}	\quad \forall y \in (a,b),
	\end{equation}
	thus 
	$$
	\mbox{$\left|f'(x)-\frac{|\bar f(y)-\bar f(x)|}{y-x} \right|\leq \max\left\{\left|f'(x)-\frac{|f(a)-f(x)|}{a-x} \right|,\left|f'(x)-\frac{|f(b)-f(x)|}{b-x} \right|\right\}$.}
	$$

	In particular, $\hat f\coloneqq\bar f\restriction \hat Q$ is differentiable,
	$\bar f$ is the linear interpolation of $\hat f$, and $\bar f$ is differentiable 
	at all points $x\in\R$ that do not belong to 
	the set $E_Q$ of all end-points of connected components of $\real\setminus \hat Q$. 
	The function $F$ we are after is defined as $\bar f+g$ for some small adjustor map $g\colon \R\to\R$ 
	such that $g=0$ on $\hat Q$. 
	
	Let $\kappa\leq\omega$ be the cardinality of the family 
	$\J$ of all connected components of $\R\setminus \hat Q$
	and let $\{(a_i,b_i)\colon 1\leq i\leq\kappa\}$ be an enumeration of $\J$.
	Since we assumed that $g=0$ on $\hat Q$, it is enough to define $g$ on each 
	interval in $\J$. 
	
	Therefore, for every $1\leq i\leq\kappa$, define $\ell_i\coloneqq\min\{1,b_i-a_i\}$ and let 
	$\e_i\in(0,3^{-i}\ell_i)$ be such that  
	\begin{itemize}
		\item[(a)] $\displaystyle \left|f'(a_i)-\frac{f(x)-f(a_i)}{x-a_i}\right|<3^{-i}$ for every $x\in P\cap [a_i-\e_i,a_i)$; 
		\item[(b)] $\displaystyle \left|f'(b_i)-\frac{f(x)-f(b_i)}{x-b_i}\right|<3^{-i}$ for every $x\in P\cap (b_i,b_i+\e_i]$.
	\end{itemize}
	Now, define $g$ on $(a_i,b_i)$ 
	as $g(x)\coloneqq\int_{a_i}^x h_i(r)\; dr$, where $h_i\colon [a_i,b_i]\to\R$,
	depicted in Fig.~\ref{3FigHi}, is such that $h_i\coloneqq 0$ on $[a_i+\e_i^2,b_i-\e_i^2]$,  
	\begin{itemize}
		\item there exists $s_i\in (a_i,a_i+\e_i^2)$ such that 
		$h_i$ is linear on $[a_i,s_i]$ \linebreak
		with $h_i(s_i)\coloneqq 0$ and 
		$h_i(a_i)\coloneqq f'(a_i)-\frac{f(b_i)-f(a_i)}{b_i-a_i}$,
		while \linebreak
		$\int_{a_i}^{s_i} |h_i(r)|\; dr=\frac 12 |h_i(a_i)| (s_i-a_i)<\e_i^2$; 
		on $[s_i,a_i+\e_i^2]$ it is defined as 
		$h_i(x)\coloneqq A_i\dist(x,\{s_i,a_i+\e_i^2\})$, 
		where the constant $A_i$ is chosen so that $\int_{a_i}^{a_i+\e_i^2} h_i(r) \; dr=0$;
		
		\item there exists $t_i\in (b_i-\e_i^2,b_i)$ such that 
		$h_i$ is linear on $[t_i,b_i]$ \linebreak
		with $h_i(t_i)\coloneqq 0$  and 
		$h_i(b_i)\coloneqq f'(b_i)-\frac{f(b_i)-f(a_i)}{b_i-a_i}$,
		while \linebreak $\int_{t_i}^{b_i} |h_i(r)|\; dr=\frac 12 |h_i(b_i)| (b_i-t_i)<\e_i^2$; 
		on $[b_i-\e_i^2,t_i]$ it is defined as 
		$h_i(x)\coloneqq  B_i\dist(x,\{b_i-\e_i^2,t_i\})$, 
		where the constant $B_i$ is chosen so that $\int_{b_i-\e_i^2}^{b_i} h_i(r) \; dr=0$.
	\end{itemize}
	\begin{figure}[h!]
		\begin{center}
			\includegraphics[width=12.0cm]{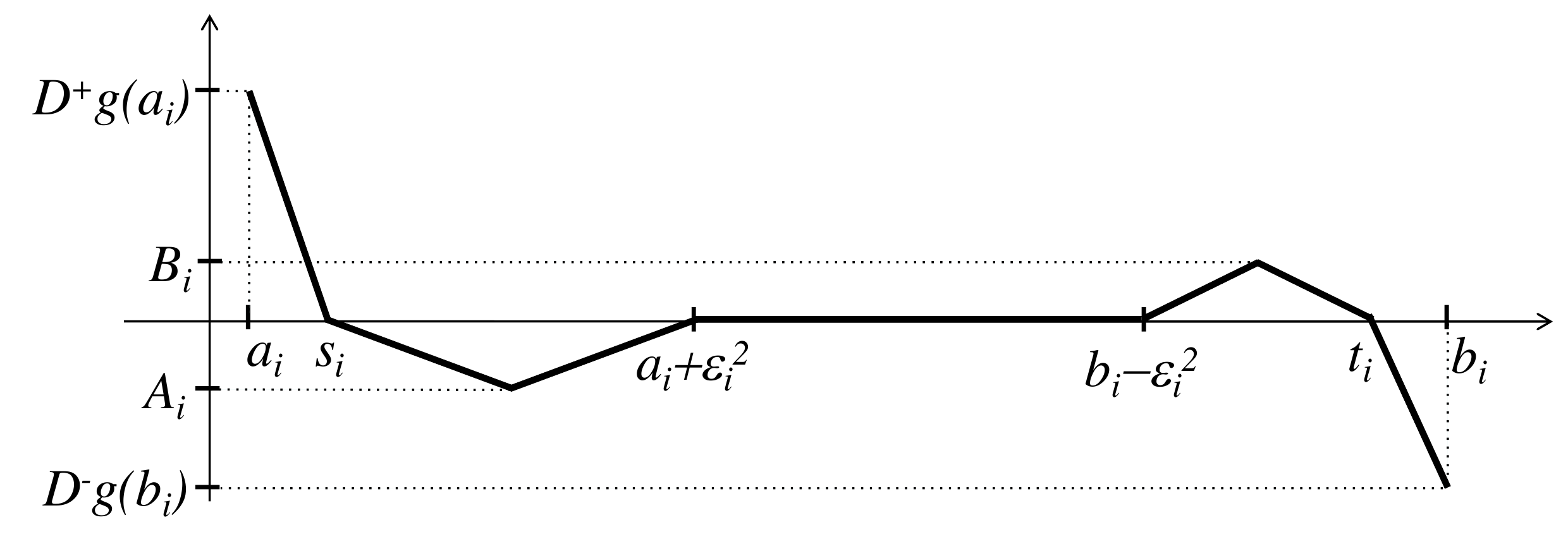}
		\end{center}
		\caption{A sketch of a map $h_i$.}
		\label{3FigHi}
	\end{figure}
	
	It is easy to see that such definition ensures that {$g\restriction[a_i,b_i]$} is  $C^1$,
	\begin{itemize}
		\item
		$D^+g(a_i)=f'(a_i)-\frac{f(b_i)-f(a_i)}{b_i-a_i}$ 
		and $D^-g(b_i)=f'(b_i)-\frac{f(b_i)-f(a_i)}{b_i-a_i}$,
		\item
		$g=0$ on $[a_i+\e_i^2,b_i-\e_i^2]$ and $|g(x)|\leq \e_i^2$ for $x\in [a_i,b_i]$,
		\item
		$|g(x)|\leq |g'(a_i)(x-a_i)|$ for $x\in [a_i,a_i+\e_i^2]$, and
		\item
		$|g(x)|\leq |g'(b_i)(x-b_i)|$ for $x\in [b_i-\e_i^2,b_i]$.
	\end{itemize}
	
	We claim that, if $g$ satisfies all these requirements, then $F\coloneqq\bar f+g$ is differentiable. 
	To see this, it suffices to show that the unilateral derivatives $D^+F(x)$ and $D^-F(x)$ exist for all $x\in\R$. 
	Indeed, if they exist, then they are equal: for $x\in \R\setminus \hat Q$ this is obvious whereas, for $x\in\hat Q\setminus \bigcup_{1\leq i\leq\kappa}\{a_i,b_i\}$, this is ensured by the fact that $D^+F(x)=D^+f(x)=D^-f(x)=D^-F(x)$,
	while for $x\in\bigcup_{1\leq i\leq\kappa}\{a_i,b_i\}$ by the first of four items above. 
	
	By symmetry, it suffices to show the existence of $D^+F(x)$.
	It clearly exists, unless $x\in \hat Q\setminus \{a_i\colon 1\leq i\leq\kappa\}$.
	For such an $x$ we have $F(x)=f(x)$. Choose $\e>0$.	It is enough to find $\delta>0$ such that 
	\begin{equation}\label{eq:Fdiff}
		\mbox{$\left|f'(x)- \frac{F(y)-f(x)}{y-x}\right|<5 \e$ whenever $y\in(x,x+\delta)$.}
	\end{equation}
	For this, pick $m\in\N$ with $3^{-m}<\e$ and choose $\delta>0$ 
	such that $( x,x+\delta)$ is disjoint with $\bigcup_{i<m}[a_i,b_i]$ and 
	$\left|f'(x)- \frac{\bar f(y)-\bar f(x)}{y-x}\right|<\e$
	when  
	$0<|y-x|<\delta$. 
	An elementary, although tedious estimation (for details, see~\cite{MC&KC}),
	shows that such a choice of $\delta$ ensures 
	(\ref{eq:Fdiff}). Thus, indeed, $F=\bar f+g$ is differentiable,
	finishing the proof of part (a).  
	
	Now we turn our attention to the proof of part (b).
	First, assume that $f$ admits a $C^1$ extension, say  $G\colon\R\to\R$. We need to show 
	that  $\hat f$ is continuously differentiable.
	Since, by the proof of part (a), $\hat f\coloneqq F\restriction \hat Q$ is differentiable, it is enough to show that 
	$\hat f'$ is continuous.
	Clearly $\hat f'$ is continuous on $\hat Q\setminus Q$, since 
	$\hat f$ is locally linear on $\hat Q\setminus Q\coloneqq \bigcup_{J\in\J} I_J$.
	Thus, we need to show that $\hat f'$ is continuous on $Q$.
	Notice that $G=f$ and $G'=f'$ on $Q$.
	
	Fix an $x\in Q$ and $\e>0$. 
	It is enough to find a $\delta>0$ such that 
	\begin{equation}\label{eq:4}
		\mbox{$|\hat f'(x)-\hat f'(y)|<\e$ whenever $y\in \hat Q\cap (x-\delta,x+\delta)$. }
	\end{equation}
	Let $\delta_0\in(0,1)$  be such that $|G'(x)-G'(y)|<\e$ whenever $|x-y|<\delta_0$.
	Choose $\delta\in(0,\delta_0)$ such that for every $J\coloneqq(a,b)\in\J$:
	if $x\in[a,b]$, then $\delta<\frac{b-a}{3}$; if $x\notin[a,b]$ and $[a,b]\not\subset (x-\delta_0,x+\delta_0)$,
	then $(x-\delta,x+\delta)$ is disjoint with $[a,b]$.
	To see that such $\delta$ satisfies (\ref{eq:4}) pick $y\in \hat Q\cap (x-\delta,x+\delta)$.
	If $y\in Q$, then (\ref{eq:4}) holds, since then 
	$|\hat f'(x)-\hat f'(y)|=|G'(x)-G'(y)|<\e$.
	Thus, assume that $y\notin Q$. Then, there exists a $J=(a,b)\in \J$ such that $y\in I_J$. 
	Note that $x\notin[a,b]$, since in such case $\delta<\frac{b-a}{3}$, preventing $y\in I_J$.
	Therefore, $[a,b]\subset (x-\delta_0,x+\delta_0)$, as $(x-\delta,x+\delta)$ is not disjoint with $[a,b]$,
	both containing $y$. By the mean value theorem, there exists a $\xi\in(a,b)\subset (x-\delta_0,x+\delta_0)$
	such that $G'(\xi)=\frac{G(b)-G(a)}{b-a}$.
	Thus, $|G'(x)-G'(\xi)|<\e$. Also, $$\hat f'(y)=\bar f'(y)=\frac{f(b)-f(a)}{b-a}=\frac{G(b)-G(a)}{b-a}=G'(\xi).$$
	Therefore, $|\hat f'(x)-\hat f'(y)|=|G'(x)-G'(\xi)|<\e$, proving (\ref{eq:4}). 
	So, indeed, $\hat f'$ is continuous.
	
	To finish the proof, assume that the derivative of $\hat f$ is continuous.
	We need to show that, in such case, 
	there is a continuously differentiable extension $F\colon\R\to\R$ of $\hat f$.
	This $F$ is constructed by a small refinement of the construction
	of $F$ extracted from part (a). 
	More specifically, for every $1\leq i\leq\kappa$, 
	let $\alpha_i$ and $\beta_i$ be the endpoints of $[a_i,b_i]$ 
	such that $\hat f'(\alpha_i)\leq \hat f'(\beta_i)$ and, 
	when choosing 
	maps $h_i$, ensure that their range is contained in the interval 
	$$\left[\hat f'(\alpha_i)-\frac{f(b_i)-f(a_i)}{b_i-a_i}-3^{-i},\hat f'(\beta_i)-\frac{f(b_i)-f(a_i)}{b_i-a_i}+3^{-i}\right].$$
	This can be achieved by choosing $s_i$ and $t_i$ so close
	to, respectively, $a_i$ and $b_i$ that the resulted constants $A_i$ and $B_i$ have magnitude
	$\leq 3^{-i}$. 
	We claim, that  such constructed $F$ has continuous derivative.
	To see this, choose an $x\in\R$. 
	We will show only that $F'$ is right-continuous at $x$, the argument for 
	left-continuity being similar.

	Clearly, the definition of $F$ ensures that $F'$ is right-continuous at $x$
	if  there exists a $y>x$ such that $(x,y)\cap \hat Q=\emptyset$. 
	So, assume that there is no such $y$.  
	Choose an $\e>0$.
	It is enough to find a $\delta>0$ such that 
	\begin{equation}\label{eq:5}
		\mbox{$|F'(x)-F'(y)|<2\e$ whenever $y\in (x,x+\delta)$. }
	\end{equation}
	Let $\delta_0>0$
	be such that $|\hat f'(x)-\hat f'(y)|<\e$ whenever $y\in (x,x+\delta_0)\cap \hat Q$. 
	Choose $n\in\N$ such that $3^{-n}<\e$
	and let $\delta\in(0,\delta_0)$ such that: $(0,\delta)$ is disjoint with 
	every $(a_i,b_i)$ for which $i<n$; if $(a_i,b_i)$ intersects 
	$(0,\delta)$, then $[a_i,b_i]\subset (0,\delta_0)$. 
	To see that such $\delta$ satisfies (\ref{eq:5}) pick $y\in (x,x+\delta)$.
	If $y\in \hat Q$, then (\ref{eq:5}) holds, since then 
	$|F'(x)-F'(y)|=|\hat f'(x)-\hat f'(y)|<\e$.
	So, assume that $y\notin \hat Q$. Then, $y\in (a_i,b_i)$ for some $i\geq n$.
	Since $\beta_i\in [a_i,b_i]\subset (0,\delta_0)$,
	we have $|\hat f'(x)-\hat f'(\beta_i)|<\e$ and 
	$\hat f'(\beta_i)< \hat f'(x)+\e$.
	So, 
	$F'(y)=\bar f'(y)+g'(y)=
	\frac{f(b_i)-f(a_i)}{b_i-a_i}+h_i(y)\leq \hat f'(\beta_i)+3^{-i}< \hat f'(x)+\e+3^{-i}\leq F'(x)+2\e$.
	Similarly, 
	$F'(y)\geq \hat f'(\alpha_i)-3^{-i}> \hat f'(x)-\e-3^{-i}\geq F'(x)-2\e$. So, (\ref{eq:5}) holds. 
\end{proof}

Finally, let us note that 
there is no straightforward generalization of part (a) of Theorem~\ref{thmMC&KC},
that is, of Jarn\'\i k's theorem, to the differentiable functions $f$ defined on closed subsets $P$ of $\R^n$.
This is the case, since the derivative of any extension $F\colon \R^n\to\R$ 
is Baire class one, as it is a pointwise limit of continuous functions
$F_n(x)\coloneqq n\left(F\left(x+\frac1n\right)-F(x)\right)$. 
Therefore the derivative $f'$ of any differentiable extendable map
$f'\colon P\to\R$ must be also Baire class one.
However, there exists a differentiable function $f\colon P\to\R$,
with $P\subset\R^2$ being closed, for which $f'$ is not Baire class one, see~\cite[thm 5]{ALP}.
Clearly this $f$ admits no differentiable extension to $\R^2$. 
However, in~\cite{ALP} the authors prove that this is the only obstacle
to generalize Jarn\'\i k's theorem to multivariable functions.
More specifically, they prove that 
a differentiable function
$f\colon P\to\R$, with $P$ being a closed subset of $\R^n$,
admits differentiable extension $F\colon \R^n\to\R$ if, and only if, 
$f'\colon P\to\R$ is Baire class one. 

Also, a straightforward generalization of Jarn\'\i k's theorem, Theorem~\ref{thmMC&KC}(a), 
to the higher order smoothness is false, since,  by Example~\ref{ex111} that comes from 
\cite{KC&JS2018}, 
there are a perfect $Q\subset\R$ and a twice differentiable $f\colon Q\to\R$ such that $f$ admits no 
extension $F\in C^2(\R)$, in spite that it admits a $C^1$ extension $F\colon\R\to\R$. 
Compare also Section~\ref{sec:higherExt} and Problem~\ref{probJarnik}.

\subsection{$C^1$-interpolation theorem and Ulam-Zahorski problem}\label{sec:Interpolation} 

The ma\-in re\-sult we li\-ke to discuss here is the following theorem
of Steven J. Agronsky, Andrew M. Bruckner, Mikl\'os Laczkovich, and David Preiss
from 1985, \cite{ABLP}. 

\thm{Thm:Inter}{For every continuous function 
	$f\colon\R\to\R$ there exists a continuously differentiable function 
	$g\colon\R\to\R$ with the property  
	that the set $[f=g]\coloneqq \{x\in \R\colon f(x)=g(x)\}$ is uncountable.
	In particular, $[f=g]$ contains a perfect set $P$ and the restriction $f\restriction P$ is continuously differentiable. 
}

The story behind Theorem~\ref{Thm:Inter} spreads over a big part of the 20th century and 
is described in detail in \cite{Ol} and \cite{Br1}. 
Briefly, around 1940 Stanis\l aw Ulam (1909--1984) asked, in the Scottish Book (Problem 17.1, see \cite{Ulam} or \cite{ScotB})  whether every continuous $f\colon\R\to\R$ agrees with some real analytic function on an uncountable set.
Zygmunt Zahorski (1914--1998) showed, in his 1948 paper~\cite{Za}, that the answer is no: there exists a
$C^\infty$ (i.e., infinitely many times differentiable) function  which can agree with every real analytic function on at most countable set of points. 
At the same paper Zahorski stated a problem known latter as {\em Ulam-Zahorski problem}:
does every $C^0$ (i.e., continuous) function  
$f\colon\R\to\R$ agree with some $C^\infty$ (or possibly $C^n$ or $D^n$) function on an uncountable set?
Clearly, Theorem~\ref{Thm:Inter} shows that Ulam-Zahorski problem has an affirmative
answer for the class of $C^1$ functions. 
This is the best possible result in this direction, since Alexander 
Olevski\v{\i} (1939--) constructed, in his 1994 paper~\cite{Ol},
a continuous function $f\colon [0,1]\to\R$ 
which can agree with every $C^2$ function on at most countable set of points.

The results related to the Ulam-Zahorski problem for the higher order differentiable functions 
are discussed in Section~\ref{sec:InterpolationHigherOrder}.

\begin{figure}[h!]
	\centering
	\begin{tabular}{cc}
\includegraphics[width=0.3661\textwidth]{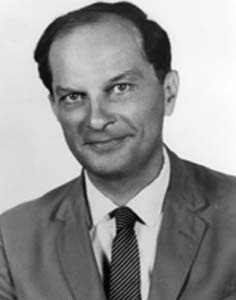} \hspace{.3cm} & \hspace{.3cm} \includegraphics[width=0.32\textwidth]{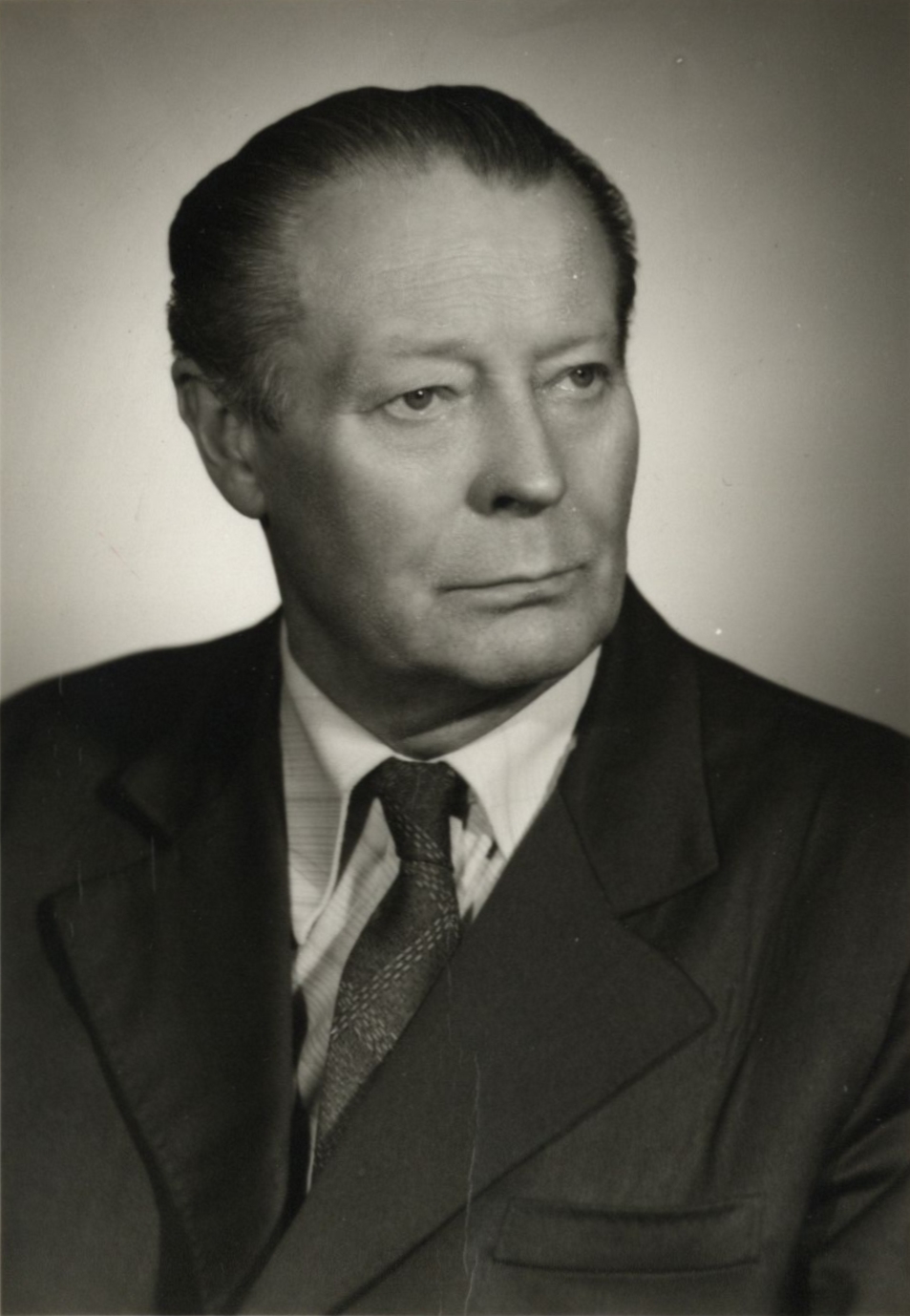}
	\end{tabular}
	\caption{Stanis\l aw Ulam and Zygmunt Zahorski.}
	\label{pic_UZ}
\end{figure}

\begin{proof}[Proof of Theorem~\ref{Thm:Inter}] 
	If	the perfect set $Q\subset\R$ is the one from Proposition~\ref{propMain}, then the quotient map 
	$q_0\coloneqq q\restriction Q^2\setminus\Delta$ can be extended to a uniformly continuous map $\bar q$ on $Q^2$ and $f\colon Q\to\R$ is continuously differentiable with $(f\restriction Q)'(x)=\bar q(x,x)$ for every $x\in Q$. By 
	part (b) of Theorem~\ref{thmMC&KC}, the extension $\hat f$ of $f\restriction Q$ is differentiable.
	In particular, 
	$\hat f'(x)= (f\restriction Q)'(x)$ for every $x\in Q$
	and $\hat f'(x)=\bar q(c,d)$ whenever $x\in I_J$, where $J\coloneqq (c,d)$ is a bounded connected component of $\R\setminus Q$.
	
	By Theorem~\ref{thmMC&KC}(b), we need to show that $\hat f'$ is continuous. 
	Clearly $\hat f'$ is continuous on $\hat Q\setminus Q$, since it is locally constant on this set. 
	So, let $x\in Q$ and fix $\e>0$. We need to find an open $U$ containing $x$ such that $|\hat f'(x)-\hat f'(y)|<\e$ whenever 
	$y\in \hat Q\cap U$. Since $\bar q$ is continuous, there exists an open $V\in \R^2$ containing $\la x,x\ra$
	such that $|\hat f'(x)-\bar q(y,z)|=|\bar q(x,x)-\bar q(y,z)|<\e$ whenever $\la y,z\ra\in Q^2\cap V$. 
	Let $U_0$ be open interval containing $x$ such that $U_0^2\subset V$ and 
	let $U\subset U_0$ be an open set containing $x$
	such that: if $U\cap I_J\neq\emptyset$ for some bounded connected component $J=(c,d)$ of $\R\setminus Q$, then $c,d\in U_0$.
	We claim that $U$ is as needed. Indeed, let $y\in \hat Q\cap U$. If $y\in Q$, then
	$\la y,y\ra\in U^2\subset V$ and $|\hat f'(x)-\hat f'(y)|=|\bar q(x,x)-\bar q(y,y)|<\e$. 
	Also, if $y\in I_J$ for some bounded connected component $J=(c,d)$ of $\R\setminus Q$, then $\la c,d\ra\in U_0^2\subset V$
	and, once again,  
	$|\hat f'(x)-\hat f'(y)|=|\bar q(x,x)-\bar q(c,d)|<\e$.
\end{proof}

\subsection{Differentiable maps on a perfect set $P\subset\R$: another monster}\label{sec:dynamics}

According to Theorem~\ref{thML}, every 
continuous $f\colon \R\to\R$ has a differentiable restriction to a perfect subset of $\R$.
Thus, a natural question is: 
\begin{quote}
{\em What can be said about differentiable functions $f\colon P\to\R$, where $P$ is a perfect subset of $\R$?}
\end{quote}
If $P$ has a non-zero Lebesgue measure, than quite a bit can be said about $f$.
(For example, its derivative will have a continuous, even Lipschitz, restriction
to a subset of $P$ of positive measure, see Remark~\ref{rem1}.) However, little seem to be known, in general case,
when  the perfect sets $P$ could have Lebesgue measure $0$.

In the ``positive'' direction, we have the following generalization of Theorem~\ref{thm:DerIsB1}. We would like to point out that the technic employed in our proof of Proposition~\ref{pr:DerIsB1partial}  is of a much simpler nature than that from the work of Mikl\'os Laczkovich on $C^1$ interpolation.

\prop{pr:DerIsB1partial}{If $P\subset \R$ is perfect and $F\colon P\to\R$ is differentiable, then $F'$ is  Baire class one.
	In particular, $F'$ is continuous on a dense $G_\delta$ subset of~$P$.}

\begin{proof}
	Let $\bar F\colon \R\to\R$ be a differentiable extension of $F$, which exists by 
	Theorem~\ref{thmMC&KC}(a). Then, by Theorem~\ref{thm:DerIsB1}, 
	$\bar F'$ is  Baire class one, and so is $F'=\bar F'\restriction P$. 
\end{proof}

There is little else we can say about the derivatives of differentiable functions $F\colon P\to\R$. 
The next example, first constructed in 2016 by the first author 
 and Jakub Jasinski in~\cite{Ci121},
shows how counterintuitively such maps can behave.
We use the symbol $\Cantor$ to denote the Cantor ternary set, that is, 
$\Cantor\coloneqq \left\{\sum_{n=0}^\infty 
\frac{2s(n)}{3^{n+1}}\colon s\in 2^\omega\right\}$, where $2^\omega$ is the 
set of all functions from $\omega\coloneqq\{0,1,2,\ldots\}$ into $2\coloneqq\{0,1\}$.

\begin{figure}[h!]
	\centering
	\includegraphics[width=0.9\textwidth]{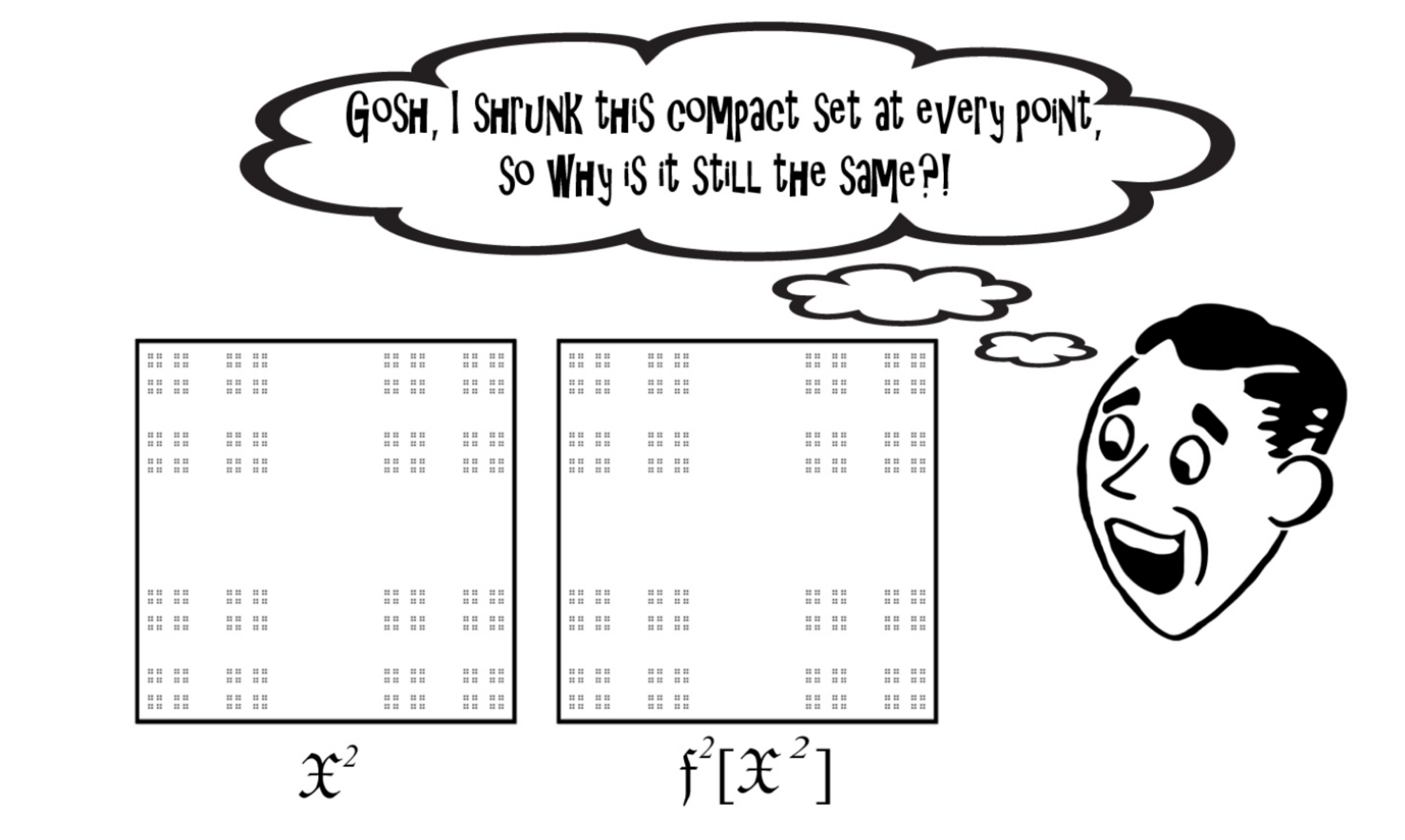}
	\caption{Illustration for Example \ref{Ex:Monster}. Result of the action of $\frak{f}^2 = \langle \frak{f}, \frak{f} \rangle$ on $\frak{X}^2 = \frak{X}\times  \frak{X}$.}\label{fX2}
\end{figure}

\ex{Ex:Monster}{There exists a perfect set $\mathfrak{X}\subset\Cantor$ and a differentiable bijection 
	${\mathfrak{f}}\colon \mathfrak{X}\to \mathfrak{X}$ such that ${\mathfrak{f}}\,'(x)=0$ for every $x\in \mathfrak{X}$. 
	 (See Fig.~\ref{fX2}.) Moreover, ${\mathfrak{f}}$ does not have any periodic points.
}

Of course, by Theorem~\ref{thmMC&KC}(a), function 
${\mathfrak{f}}$ can be extended to a differentiable map $F\colon\R\to\R$. 
However, no function ${\mathfrak{f}}$ as in the example admits 
{\em continuously}\/ differentiable extension $F\colon\R\to\R$, as proved in \cite[lemma 3.3]{Ci112}.

There is something counterintuitive about such function. Having derivative 0 at every point, 
it is {\em pointwise contractive}\/ with every constant $\lambda\in(0,1)$:
for every $x\in \mathfrak{X}$ there is an open subset $U$ of $\mathfrak{X}$ 
containing $x$ such that $|\mathfrak{f}(x)-\mathfrak{f}(y)|\leq \lambda  |x-y|$ for all $y\in U$.
Thus,  $\mathfrak{f}$
is pointwise contractive but globally stable (in the sense that  
$\mathfrak{f}[\mathfrak{X}]=\mathfrak{X}$).
The functions that have such property globally
cannot map any compact perfect set onto itself:
if $\mathfrak{f}$ is {\em shrinking}\/ (i.e., such that $|\mathfrak{f}(x)-\mathfrak{f}(y)|<  |x-y|$ for all distinct $x,y\in \mathfrak{X}$),
then  $\diam(\mathfrak{f}[\mathfrak{X}])<\diam(\mathfrak{X})$. 
The map $\mathfrak{f}$ also cannot be 
{\em locally shrinking}, in a sense that for every $x\in \mathfrak{X}$ there exists an open $U\ni x$ in $\mathfrak{X}$ 
such that $\mathfrak{f}\restriction U$ is shrinking. Indeed, 
by a theorem of Michael  Edelstein (1917-2003) from his 1962 paper~\cite{Edel2}, every locally shrinking self-map of a compact space must have a periodic point. 
Of course, $\mathfrak{X}$ must have Lebesgue measure zero, since ${\mathfrak{f}}\,'\equiv0$
implies that $\mathfrak{f}[\mathfrak{X}]$ must have measure zero, see for example~\cite[p. 355]{Foran}.

\begin{figure}[h!]
	\centering
	\includegraphics[width=0.33\textwidth]{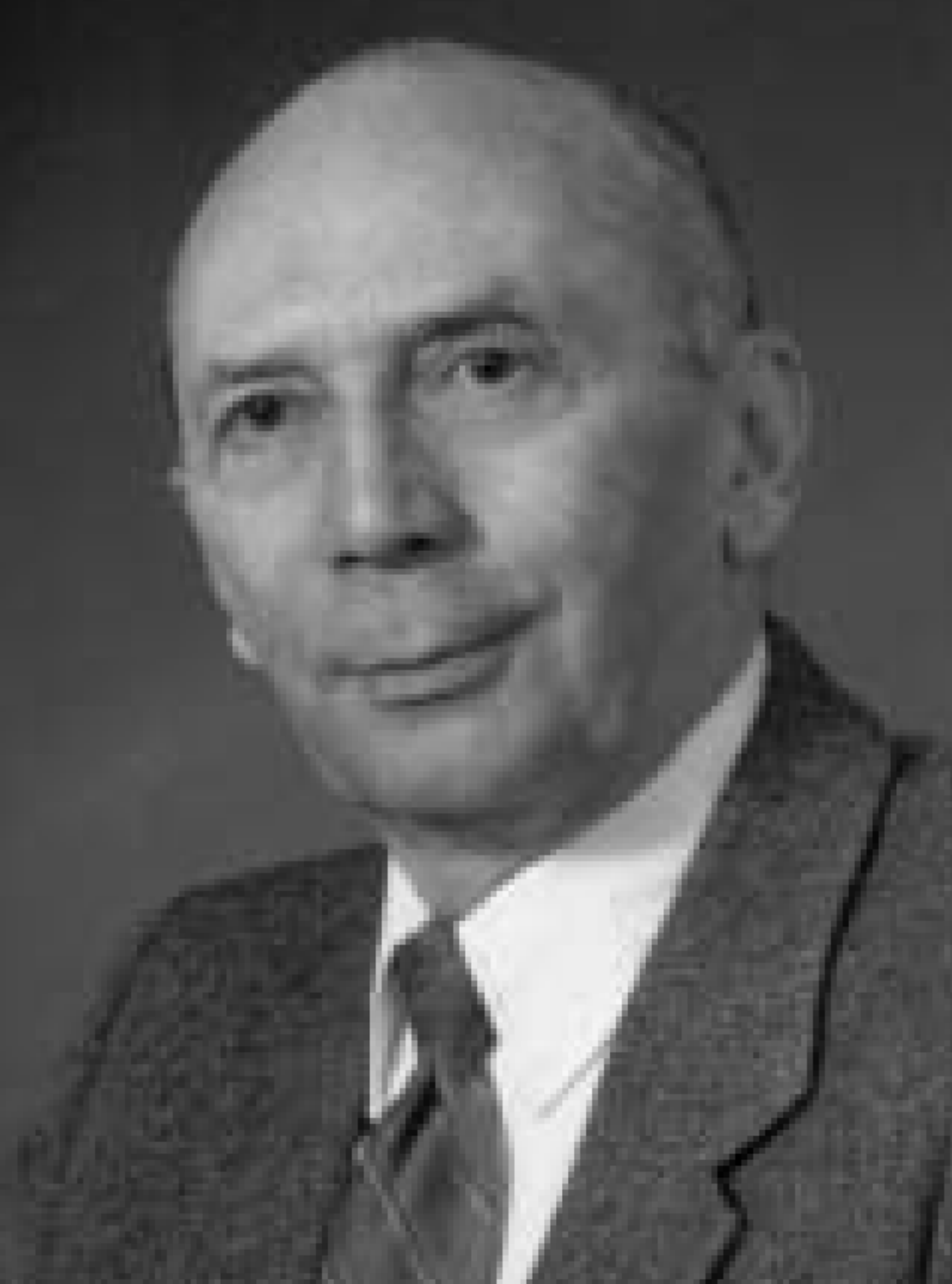}
	\caption{Michael Edelstein.}
	\label{pic_ME}
\end{figure}

The construction of $\mathfrak{f}$ we present below comes from 
\cite{KC:Monthly} and is based on its variants from~\cite{Ci121} and~\cite{BKO}.

\begin{proof}[Construction of $\mathfrak{f}$ from Example~\ref{Ex:Monster}] 
	Let 
	$\sigma\colon 2^\omega\to 2^\omega$ be the {\em add-one-and-carry adding machine},
	that is, defined, 
	for every  
	$s\coloneqq\la s_0,s_1,s_2,\ldots\ra\in2^\omega$, as 
	\[
	\sigma(s)\coloneqq
	\begin{cases} 
	\la 0,0,0,\ldots\ra & \mbox{ if $s_i=1$ for all $i<\omega$},\\
	\la 0,0,\ldots,0,1,s_{k+1},s_{k+2},\ldots\ra & \mbox{ if $s_k=0$ and $s_i=1$ for all $i<k$}.
	\end{cases}
	\]
	For more on adding machines, see survey~\cite{Do}.
	
	The map $\mathfrak{f}$ is defined as 
	$\mathfrak{f}\coloneqq h\circ\sigma\circ h^{-1}\colon h[2^\omega]\to h[2^\omega]$, where
	$h\colon 2^\omega\to\R$ 
	is an appropriate embedding that ensures that $\mathfrak{f}\,'\equiv 0$. 
	Thus, $\mathfrak{X}\coloneqq h[2^\omega]$. 
	
	We define embedding $h$ via formula:
	$$
	h(s)\coloneqq  \sum_{n=0}^\infty 2s_n 3^{-(n+1)N(s\restriction n)},
	$$ 
	where 
	$N(s\restriction n)$ is the natural 
	number for which the following 0-1 
	sequence\footnote{
		$\nu(s,n)$ is obtained from $s\restriction n=\la s_0,\ldots,s_{n-1}\ra$ by: ``flipping'' its last digit $s_{n-1}$ to 
		$1-s_{n-1}$, appending 1 at the end, and reversing the order. The `appending 1' step is to ensure that 
		$2^{n}\leq N(s\restriction n)$. The ``flipping'' step is the key new trick,
		that comes from~\cite{BKO}.}
	$\nu(s,n)\coloneqq\la 1,1-s_{n-1},s_{n-2},\ldots,s_{0}\ra$ 
	is its binary representation, that is, we have 
	$N(s\restriction n)\coloneqq  \sum_{i<n-1}s_i2^i +(1-s_{n-1}) 2^{n-1}+2^{n}$.
	
	Clearly, $2^{n}\leq N(s\restriction n)\leq\sum_{i\leq n}2^i<2^{n+1}$ for every $s\in 2^\omega$ and $n<\omega$. 
	Hence, the sequence $\la N(s\restriction n)\colon n<\omega\ra$ 
	is strictly increasing and $h$  is an embedding into $\Cantor$. 
	So, $\mathfrak{X}=h[2^\omega]\subset\Cantor$. 
	
	The proof that $\mathfrak{f}\,'\equiv 0$ 
	follows from  two observations: 
	\begin{itemize}
		\item[(a)] for every $s\in 2^\omega
		$ there is a $k<\omega$ such that  
		$N(\sigma(s)\restriction n)= N(s\restriction n)+1$ for every $n>k$;
		\item[(b)]  if $n\coloneqq\min\{i<\omega\colon s_i\neq t_i\}$ for some distinct 
		$s\coloneqq\la s_i\ra$ and $t\coloneqq\la t_i\ra$ from 
		$2^\omega$,  then 
		$3^{-(n+1)N(s\restriction n)}\leq |h(s)-h(t)|\leq 3 \cdot 3^{-(n+1)N(s\restriction n)}$. 
	\end{itemize}
	Indeed, to see that $\mathfrak{f}\,'(h(s))=0$ for an $s\in 2^\omega$, choose a $k<\omega$ satisfying (a)
and let $\delta>0$ be such that the inequality $0<|h(s)-h(t)|<\delta$ implies that 
$n=\min\{i<\omega\colon s_i\neq t_i\}$ is greater than $k$. 
Then, for any $t\in 2^\omega$ for which $0<|h(s)-h(t)|<\delta$, we have 
$$n=\min\{i<\omega\colon s_i\neq t_i\}=\min\{i<\omega\colon \sigma(s)_i\neq \sigma(t)_i\}$$ and,
using (a) and (b) for the pairs $\la s,t\ra$ and $\la \sigma(s),\sigma(t)\ra$, we obtain 
$$
\frac{|\mathfrak{f}(h(s))-\mathfrak{f}(h(t))|}{|h(s)-h(t)|}=
\frac{|h(\sigma(s))-h(\sigma(t))|}{|h(s)-h(t)|}\leq
\frac
{3 \cdot 3^{-(n+1)N(\sigma(s)\restriction n)}}
{3^{-(n+1)N(s\restriction n)}}=3 \cdot 3^{-(n+1)}.
$$
Thus, indeed $\mathfrak{f}\,'(h(s))=0$, as 
$3 \cdot 3^{-(n+1)}$ is arbitrarily small for $\delta$ small enough.

To see (a) let $s=\la s_i\ra_i$ and notice that, for every $0<n<\omega$,
\begin{equation}\label{one}
\mbox{$N(\sigma(s)\restriction n)= N(s\restriction n)+1$, unless $s_0=\cdots=s_{n-2}=1$ and $s_{n-1}=0$.}
\end{equation}
Indeed, if $s_i=0$ for some $i<n-1$, then $\sum_{i<n-1}\sigma(s)_i2^i=1+\sum_{i<n-1}s_i2^i$ 
and $\sigma(s)_{n-1}=s_{n-1}$, giving (\ref{one}). 
Otherwise, 
$s_0=\cdots=s_{n-2}=1$ and, by our assumption, 
also $s_{n-1}=1$. This implies that 
$\sigma(s)_0=\cdots=\sigma(s)_{n-1}=0$.
Thus  
$N(\sigma(s)\restriction n)=\sum_{i<n-1}2^i +2^{n}=2^{n-1}-1+2^{n}$ and 
$N(s\restriction n)= 2^{n-1}+2^{n}$, again giving (\ref{one}). 

Since for every $s=\la s_i\ra_i\in 2^\omega$ there is at most one $0<n<\omega$
for which $s_0=\cdots=s_{n-2}=1$ and $s_{n-1}=0$, any $k$ greater than this number satisfies~(a). 

To see property (b), first notice that for every $s\coloneqq\la s_i\ra_i \in 2^\omega$ and $n<\omega$, if 
$H(s\restriction n)\coloneqq\sum_{k< n} 2s_k 3^{-(k+1)N(s\restriction k)}$ is the initial partial sum of $h(s)$, then 
\begin{equation}\label{two}
\mbox{$H(s\restriction n)+ 2s_n 3^{-(n+1)N(s\restriction n)}\leq h(s)\leq H(s\restriction n)+ (2s_n +1)3^{-(n+1)N(s\restriction n)}$.}
\end{equation}
Indeed, 
$h(s)= H(s\restriction n) + 2s_n 3^{-(n+1)N(s\restriction n)}+2\sum_{k> n} 3^{-(k+1)N(s\restriction k)}$ while also 
$0\leq 2\sum_{k> n} 3^{-(k+1)N(s\restriction k)}
\leq 2\sum_{i=1}^\infty 3^{-[(n+1)N(s\restriction n)+i]}
=3^{-(n+1)N(s\restriction n)}$,
where the second inequality holds, since 
the sequence $\la (k+1)N(s\restriction k)\ra_k$  is strictly increasing
(as $2^k\leq N(s\restriction k)<2^{k+1}$). 
This clearly implies (\ref{two}). 

To prove the inequalities in (b) we can assume that $s_n=0$ and $t_n=1$. 
Then, by (\ref{two}) used for $s$ and $t$, 
we have 
$h(s)\leq H(s\restriction n)+ 3^{-(n+1)N(s\restriction n)}$ and \linebreak
$H(t\restriction n)+ 2 \cdot 3^{-(n+1)N(t\restriction n)}\leq h(t)$. 
Using these inequalities and our assumption that 
$t\restriction n=s\restriction n$,
we obtain $h(t)-h(s)\geq  3^{-(n+1)N(s\restriction n)}>0$.
In particular, we get the lower bound 
$|h(s)-h(t)|=h(t)-h(s)\geq  3^{-(n+1)N(s\restriction n)}$. 
Next, using just proved fact that $h(s)<h(t)$
and property (\ref{two}) for $s$ and $t$,
we obtain 
$$H(t\restriction n)=H(s\restriction n)\leq h(s)<h(t)\leq H(t\restriction n)+ 3 \cdot 3^{-(n+1)N(t\restriction n)}.$$
In particular, $$|h(s)-h(t)|=h(t)-h(s)\leq 3 \cdot 3^{-(n+1)N(t\restriction n)}=3 \cdot 3^{-(n+1)N(s\restriction n)},$$
the desired upper bound. 
\end{proof} 

We like to stress, once more, that the compactness 
of $\mathfrak{X}$ in Example~\ref{Ex:Monster} is what makes it so paradoxical. 
It is relatively easily to believe in the existence of the perfect unbounded subsets of 
$\R$ that admit similar mappings. 
Actually, it has been proved by the first author 
 and Jakub Jasinski in~\cite{Ci112}
that there exists a $C^\infty$ function $g=\la g_1,g_2\ra\colon\R\to\R^2$ and a perfect unbounded $P\subset \R$ 
such that $g_1'\restriction P=g_2'\restriction P\equiv  0$ and 
$g\restriction P$ is Peano-like in a sense that $g[P]=P^2$. 
On the other hand, it is unknown
(see Problem~\ref{prPeano} and \cite[problem 1]{Ci112}),
whether there exists a $D^1$ function $h$ 
(i.e., with $D^1$ coordinates) from a compact perfect $P\subset\R$ onto $P^2$. 
Of course, by Theorem~\ref{thmMC&KC}(a), such a map could be extended to a $D^1$ map from $\R$ to $\R^2$.
(However, there is no such an $h$ that could be extended to a $C^1$ map from $\R$ to $\R^2$, since
it has been proved in 
\cite[thm 3.1]{Ci112} that $P^2\not\subset f[P]$ 
for every $C^1$ function $f\colon\R\to\R^2$  and compact perfect $P\subset\R$.)

It has been recently proved, by the first author and his student Cheng-Han Pan \cite{CiPan}, that 
the function ${\mathfrak{f}}\colon \mathfrak{X}\to \mathfrak{X}$
from Example~\ref{Ex:Monster} can be also extended to functions $F_1,F_2\colon\R\to\R$ such that 
$F_1$ is a Weierstrass monster, while $F_2$ is a differentiable monster.
This squeezes three paradoxical examples to just two functions. 
The existence of a differentiable monster $F_2\colon\R\to\R$ extending ${\mathfrak{f}}$ follows immediately from the 
following ``twisted'' version of Jarn\'\i k's Differentiable Extension Theorem, our Theorem~\ref{thmMC&KC}(a), that comes from~\cite{CiPan}. 

\thm{mainCHP}{
For every perfect $P\subseteq\mathbb{R}$ and differentiable $f\colon P\to\mathbb{R}$, 
there exists a differentiable extension $\hat{f}\colon\mathbb{R}\to\mathbb{R}$ of $f$ such that 
$\hat{f}$ is nowhere monotone on $\mathbb{R}\setminus P$.
In particular, if $P$ is nowhere dense in $\R$, then $\hat{f}$ is monotone on no interval.}

The function $\hat{f}$ in Theorem~\ref{mainCHP} is constructed by using Theorem~\ref{thmMC&KC}(a)
to find an arbitrary differentiable extension $F\colon\mathbb{R}\to\mathbb{R}$ of $f$,
choosing differentiable $g\colon\R\to[0,\infty)$ with $g^{-1}(0)=P$,
and using the existence of a differentiable monster to 
find an extension $\hat{f}\colon\mathbb{R}\to\mathbb{R}$ of $f$
which differentiable nowhere monotone on $[a,b]$ for every $(a,b)\subset\R\setminus P$ 
and such that 
$\left|\hat f(x)-F(x)\right|\leq g(x)$  for every $x\in\R$. 
Such $\hat{f}$ is also differentiable on $P$, which is verified by a simple application the squeeze theorem. 
(See \cite[lemma 4]{CiPan}.)

\subsection{A few words on monotone restrictions}\label{sec:monotone}
Of course, the concept of monotonicity is closely related to both continuity and differentiability.
Therefore, we like to finish this section with some facts concerning monotone restrictions of continuous functions.

We start with the following 1966 theorem of 
Franciszek Miros{\l}aw Filipczak \cite{filipczak1966}. 

\thm{thm:filipczak}{For every continuous function $f\colon\R\to\R$ 
and every perfect $P\subset \R$ there exists a perfect set $Q\subset P$
such that $f\restriction Q$ is monotone. 
} 

\begin{proof}
For $P=\R$ the set $Q$ we constructed in our proof of Theorem~\ref{thML} 
is as needed. More specifically,
if $f$ is monotone on some non-trivial interval $[a,b]$, then $Q=[a,b]$ is as needed.
Otherwise, by a theorem of Komarath Padmavally~\cite{Padm},
there is perfect set $Q\subset\R$ on which $f$ is constant, so monotone.  
However, even for a general perfect set $P\subset\R$, one can prove the theorem
by the following simple argument.

If there is a non-empty open subset $U$ of $P$ on which $f$ is monotone,
then $Q=\cl_P(U)$ is as needed.
Otherwise, construct (by induction on $n<\omega$) the closed non-empty intervals $\{I_s\colon s\in 2^n \ \& \ {n<\omega}\}$,
such that for every $s\in  2^n$:
\begin{itemize}
\item $I_s$ is of length $\leq 2^{-n}$ and $I_s\cap P$ is perfect;
\item $I_{s0}$ and $I_{s1}$ are disjoint subsets of $I_s$ such that $f(x)<f(y)$
for every $x\in I_{s0}\cap P$ and $y\in I_{s1}\cap P$.
\end{itemize}
Then $Q=\bigcap_{n<\omega} \bigcup_{s\in 2^n} I_s$ is as needed. 
\end{proof}

Notice that, in Theorem~\ref{thML} the perfect set $Q$ (with differentiable $f\restriction Q$)
cannot be chosen inside a given perfect set $P$, unless $P$ is of positive Lebesgue measure
(compare with Remark~\ref{rem1}.) This holds true when $f$ has infinite derivative on some perfect set $P$.
For example, such an $f$ can be chosen as a Pompeiu  function $g$ from Proposition \ref{pr:Pom}
with $P$ being a subset of the dense $G_\delta$ set $\{x\in \R\colon g'(x)=\infty\}$.

The above discussion shows that finding a monotone restriction $f\restriction Q$,
of continuous $f$, is a harder problem than that of finding a differentiable restriction.
In fact, by Theorem~\ref{thm:filipczak},
any differentiable restriction $f\restriction Q$ can be further refined, so that 
$f\restriction Q$ is also monotone (which, clearly, cannot be done in the reversed order).  
Thus, one may wonder, if for $P=\R$ the set $Q$ in Theorem~\ref{thm:filipczak}
can be always chosen having positive Lebesgue measure.
A relatively easy counterexample for this assertion is a continuous $f$ which is 
nowhere approximately differentiable, e.g., a function $f$ from Remark~\ref{rem2}. 
Indeed, for such an $f$ and any perfect $Q$ of positive Lebesgue measure
the restriction $f\restriction Q$ cannot be monotone.
(Otherwise, a monotone extension $\bar f\colon\R\to\R$ of $f\restriction Q$, say its linear interpolation,
is, by a theorem of  Lebesgue, differentiable almost everywhere. Hence, $f\restriction Q$
has many points of differentiability, at which $f$ is approximately differentiable, a contradiction.) 

A considerably stronger counterexample was given in a 2009 paper \cite{kahanekatznelson2009}
of Jean-Pierre Kahane (1926--2017) and Yitzhak Katznelson
by constructing a continuous function $f\colon\R\to\R$ 
such that $f \restriction E$ is not monotone unless $E$ has Hausdorff dimension~0. 
More on monotone restrictions can be found in a 2011 paper \cite{kharazishvili2011}
of Alexander B. Kharazishvili (1949--) and 2017 article \cite{buczolich2017} of 
Zolt\'an Buczolich (1961--).

\section{Higher order differentiation}\label{sec:higher}

In this section we will discuss, in more detail, the higher order versions of Ulam-Zahorski interpolation problem and of the differentiable extension theorems of  Whitney and Jarn\'\i k. 

\subsection{Extension theorems}\label{sec:higherExt}
The original 1934 Whitney's Extension Theorem \cite{Wh}
provides the necessary and sufficient conditions for a  function $f$ from 
a closed subset $P$ of $\R^k$ ($k\in\N$) into $\R$ 
to have a $C^n$ ($n\in\N$) extension $\bar f\colon \R^k\to\R$. 
This theorem has been studied extensively, see e.g. \cite{Federer,WET2,WET3}. 
Here, we discuss it only for $k=1$ and $P\subset\R$ being perfect.
These assumptions ensure that the notion of the derivative of $f$ 
is well defined at each $a\in P$, what allows a simpler statement of the 
theorem\footnote{In terms of Taylor polynomials, rather than some implicitly given polynomials.} 
and a relatively simple proof of it, both coming from \cite{KC&JS2018}.

For an $n<\omega$, a perfect set $P\subset\R$, a $D^n$ function $f\colon P\to\R$, and 
an $a\in P$ let $T^n_a f(x)$ denote the $n$-th degree Taylor polynomial of $f$ at $a$:
\begin{equation*}
T^n_a f(x)\coloneqq\sum_{i=0}^{n} \frac{f^{(i)}(a)}{i!}(x-a)^{i}.
\end{equation*}
Also, define the map $q_f^n\colon P^2\to\R$ as 
\begin{equation*}
q_f^n(a,b)\coloneqq
\begin{cases}
\displaystyle \frac{T^n_b f(b)-T^n_a f(b)}{(b-a)^n} & \mbox{ if $a\neq b$,}\\
0& \mbox{ if $a= b$}.
\end{cases}
\end{equation*}

\thm{th:Whitney}{{\bf [Whitney's Extension Theorem]}
Let $P\subset \R$ be perfect, $n\in\N$, and $f\colon P\to\R$. 
There exists a $C^n$  extension $\bar f\colon \R\to\R$ of $f$
if, and only if, 
\begin{itemize}
\item[($W_n$)] $f$ is $C^n$ and the map $q_{f^{(i)}}^{n-i}\colon P^2\to\R$ is continuous for every $i\leq n$.
\end{itemize}
}

Theorem \ref{th:Whitney} easily follows from the general version of Whitney's Extension Theorem,
whose (quite intricate) proof can be found in \cite{Federer}, \cite{WET2}, or \cite{Wh}. 
A considerable shorter detailed proof of the specific form of Theorem \ref{th:Whitney}
can be found in  \cite{KC&JS2018}.

It should be noticed that the 
necessity part of Theorem \ref{th:Whitney} is easy to see. Specifically, 
($W_n$) must be satisfied by $f$, since 
it must be satisfied 
by any $C^n$ function $\bar f\colon \R\to\R$---this can be deduced from the well known behavior of
the reminder of the Taylor polynomials, see  \cite[prop. 3.2]{KC&JS2018}. 
Thus, the true value of the theorem lies in the sufficiency of the condition,
that is, the construction a $C^n$  extension $\bar f\colon \R\to\R$ of $f$ and the proof that, under the assumptions,
it is indeed $C^n$. 

This extension 
is defined as a weighted average of 
the maps $T^n_a f$, where the weights are given by an appropriate partition of unity of the complement of $P$.
Finding such a partition is the main difficulty for the functions of more than one variable. However, this difficulty 
almost completely vanishes for the functions of one variable, as we see below.

\begin{proof}[The construction of $\bar f$ from Theorem \ref{th:Whitney}] 
Let $f\colon P\to\R$ be as in the assumptions and let 
$H$ be the convex hull of $P$. 
We will construct 
a $C^n$ extension $\bar f\colon H\to\R$ of $f$.
This will finish the proof since, in an event when the interval $H$ is not the entire $\R$,
a further $C^n$ extension of $\bar f$ defined on $\R$ can be easily found. 

Let $\{(a_j,b_j)\colon j\in J\}$ be the family of all connected components of $H\setminus P$. 
Choose a non-decreasing $C^\infty$ map $\psi\colon\R\to\R$ 
such that $\psi=1$ on $[2/3,\infty)$ and $\psi=0$ on $(-\infty,1/3]$.
For every $j\in J$ define the following functions from $\R$ to $\R$: 
\begin{itemize}
\item the linear map $\displaystyle L_j(x)\coloneqq\frac{x-a_j}{b_j-a_j}$ (so, $L_j(a_j)=0$ and $L_j(b_j)=1$);
\item $\beta_j\coloneqq\psi\circ L_j$ and $\alpha_j\coloneqq1-\beta_j$;
\item $\bar f_j\coloneqq\alpha_jT^n_{a_j}f+\beta_jT^n_{b_j}f$. 
\end{itemize}
Then, the extension $\bar f\colon H\to\R$ of $f$
is defined by declaring simply that 
\begin{equation*}
\bar f\restriction (a_j,b_j)\coloneqq\bar f_j\restriction (a_j,b_j) \mbox{ for every $j\in J$}.
\end{equation*}

The detailed two page long proof showing that such function  $\bar f\colon H\to\R$ is, indeed, $C^n$ 
can be found in \cite{KC&JS2018}.
\end{proof}

Interestingly, it is relatively easy to deduce from Theorem \ref{th:Whitney}
its $C^\infty$ version, using the fact that the extension $\bar f$ is always 
$C^\infty$ on the complement of $P$. For the proof, see \cite[theorem 3]{Merrien}.\footnote{This 
result can be also deduced from Whitney's papers 
\cite{whitney2} and \cite[\S 12]{Wh}. See also the 1998 paper \cite{P1998}, where it is shown that
the analogous result for  functions on $\R^k$, $k\ge 2$, does not hold.}

\rem{rem:Wh}{If $P\subset \R$ is perfect, then $f\colon P\to\R$
admits a $C^\infty$ extension $\bar f\colon \R\to\R$ 
if, and only if, ($W_n$) holds for every $n\in\N$. 
}

One must be very careful when considering variations of Theorem \ref{th:Whitney}.
For example, for $n\in\N$
consider the following statement on the existence of $C^n$ extensions:
\begin{itemize}
\item[($L_n$)] Let $f\colon \R\to\R$ be $C^{n-1}$ 
and $P\subset\real$ be a perfect set 
for which the map $F\colon P^2\setminus\Delta\to\real$
defined by 
$
F(x,y)\coloneqq\frac{f^{(n-1)}(x)-f^{(n-1)}(y)}{x-y}
$
is uniformly continuous and bounded. Then $f\restriction P$ can be extended to
a $C^n$ function $\bar f\colon \R\to\R$. 
\end{itemize}
The property ($L_1$) is well known and
follows immediately from Theorem \ref{th:Whitney} used with $n=1$, since 
the continuity of $q_{f^{(1)}}^{1-1}(a,b)=f'(b)-f'(a)$ is just the continuity of $f'$,
which follows from the assumptions on the function $F$ from ($L_1$),
while the continuity of 
$q_{f^{(0)}}^{1-0}(a,b)=\frac{f(b)-f(a)}{b-a}-f'(a)$ is equivalent of the continuity of 
$F$.

In the book \cite{CPAbook}
of the first author and Janusz Pawlikowski it is claimed (as lemma 4.4.1) 
that ($L_n$) is also true for $n>1$.\footnote{The same claim is also present in \cite{CiPa}.
The error was caused by an incorrect interpretation of \cite[thm. 3.1.15]{Federer}.
Luckily, the results deduced in \cite{CPAbook} and \cite{CiPa}
from the incorrect claim remain true, as recently proved in \cite{KC&JS2018}. 
}
The next example shows that such claim is false. 

\ex{ex111}{
Let $\Cantor$ be the Cantor ternary set.
There exists a $C^1$ function $f\colon\R\to\R$ 
such that $f'\restriction \Cantor\equiv 0$ 
and for no perfect set $P\subset\Cantor$ 
there is a $C^2$ extension $\bar f\colon\R\to\R$ 
of $f\restriction P$. 
In particular, $f\restriction\Cantor$ contradicts ($L_2$).
}

\begin{proof}[Construction] For $n\in\N$ let 
$\J_n$ be the family of all connected components
of $\R\setminus \Cantor$ of length $3^{-n}$. 
Define $f_0\colon \R\to\R$ as 
\[
f_0(x)\coloneqq
\begin{cases}
\frac{2^{-n}}{3^{-n}} \dist(x,\Cantor) & \mbox{ if $x\in J$, where $J\in\J_n$  for some $n\in\N$, and}\\
0 
& \mbox{ otherwise.}\\
\end{cases}
\]
It is easy to see that $f_0$ is continuous,
since $f_0[J]\subset[0,2^{-n}]$ for every $J\in\J_n$. 
Define $f\colon \R\to\R$ 
via formula 
$f(x)\coloneqq\int_0^x f_0(t) \, dt$. 
Clearly $f$ is $C^1$ 
and $f'\restriction \Cantor=f_0\restriction \Cantor\equiv 0$.
We just need to verify the statement about the extension. 

To see this, notice that for every $n\in\N$ and distinct $a,b\in\Cantor$
\begin{equation}\label{ONE}
\mbox{if $|b-a|<3^{-n}$, then } \frac{|f(b)-f(a)|}{(b-a)^2}>\frac{1}{36}\left(\frac32\right)^n.
\end{equation}
Indeed, if $m\in \N$ is the smallest such 
that there is $J=(p,q)\in\J_m$ between $a$ and $b$,
then $m>n$, 
$|b-a|\leq 3 \cdot 3^{-m}$, and  
$|f(b)-f(a)|\geq \int_p^q f_0(t)\, dt= \frac{1}{2} 3^{-m}\frac{1}{2}2^{-m}$.
So, 
$ \frac{|f(b)-f(a)|}{(b-a)^2}\geq \frac{\frac{1}{4} 3^{-m}2^{-m}}{(3\cdot 3^{-m})^2}
=\frac{1}{36}\left(\frac32\right)^m>\frac{1}{36}\left(\frac32\right)^n$. 
But this means that for every perfect $P\subset\Cantor$ the map $f\restriction P$ does not 
satisfy condition ($W_2$) from Theorem~\ref{th:Whitney},
our version of Whitney's Extension Theorem, which is necessary for 
admitting a $C^2$ extension $\bar f\colon\R\to\R$ 
of $f\restriction P$.
More specifically, either $(f\restriction P)''(a)$ does not exist or else 
\begin{align*}
\displaystyle \left| q_{f\restriction P}^2(a,b)\right| &
\displaystyle =\frac{\left|f(b)-f(a)-\frac12 (f\restriction P)''(a) (b-a)^2\right|}
{(b-a)^2} \\
& \displaystyle \geq \frac{|f(b)-f(a)|}{(b-a)^2}-\frac12 (f\restriction P)''(a),
\end{align*}
that is,
$q_{f\restriction P}^2$ is not continuous at $\la a,a\ra$, as, by (\ref{ONE}), 
$\displaystyle\lim_{b\to a, b\in P} \frac{|f(b)-f(a)|}{(b-a)^2}=\infty$. 
\end{proof} 

\subsubsection*{Is there higher order Jarn\'\i k's Extension Theorem?}

Theorem \ref{th:Whitney} gives a full characterization of functions $f$ 
from perfect $P\subset\R$ into $\R$ that admit $C^n$ extensions $\bar f\colon \R\to\R$.
In this context, it is natural also to consider the following question: 
\begin{itemize}
	\item[Q:] Is there an analogous characterizations of functions $f\colon P\to\R$,  
	where $P\subset\R$ is perfect,
that admit $D^n$ extensions $\bar f\colon \R\to\R$?
\end{itemize}
Of course, any $f$ admitting $D^n$ extension $\bar f\colon \R\to\R$ must satisfy the property 
\begin{itemize}
	\item[($V_n$):] $f$ is $D^n$ and ($W_{n-1}$) from Theorem \ref{th:Whitney},
\end{itemize}
as $\bar f$ is $C^{n-1}$. 
Also,
Theorem~\ref{thmMC&KC}(a) of Jarn\'\i k immediately implies the following.
\cor{cor:Jarnik}{Let $P\subset \R$ be perfect.
A function $f\colon P\to\R$ admits a $D^1$  extension $\bar f\colon \R\to\R$
if, and only if, $f$ is $D^1$. 
}
In particular, since ($V_1)$ holds if, and only if, $f$ is $D^1$, 
the property ($V_n$) consists of a characterization for Q in case of $n=1$. 
This suggests that the property ($V_n$) is also the desired characterization for 
an arbitrary $n\in\N$.
However, this is not the case already for $n=2$, as exemplified by 
the function $f\restriction P$ from Example \ref{ex111}.
In particular, the question Q remains an open problem for $n\geq 2$, see Problem~\ref{probJarnik}. 

\subsection{Generalized Ulam-Zahorski interpolation problem}\label{sec:InterpolationHigherOrder} 
One can formulate  Ulam-Zahorski interpolation problem for any two  arbitrary classes $\F$ and $\G$ of functions from $\R$ to $\R$ (or, more generally, from a space $X$ into $Y$) as 
the following statement: 
\begin{itemize}
\item[$\UZ(\G,\F)$:] {\it For every $g\in\G$ there is an $f\in\F$ with uncountable $[f=g]$.}
\end{itemize}
Of course, if $\G'\subseteq\G$ and $\F\subseteq\F'$, then 
$\UZ(\G,\F)$ implies $\UZ(\G',\F')$. 

In this notation Zahorski's negative solution of Ulam's problem can be expressed simply as 
$\neg \UZ(C^\infty,\A)$, where $\A$ denotes the class of all real analytic functions. 
Also, Zahorski's question can be understood as an inquiry on the validity of
$\UZ(\G,\F)$ for all 
pairs $\la \G,\F\ra$ of families from 
\[
{\mathbb D}=\{C^\infty\}\cup\{C^n\colon n<\omega\}\cup\{D^n\colon n\in\N\}.
\] 
With the exception of the unknown validity of $\UZ(D^1,D^2)$
(see Problem~\ref{probInterp}), all these interpolation statements are well understood,
as summarized in the following theorem.
Recall that ${\mathbb D}$ is ordered by inclusion as follows:
\[
C^\infty \subsetneq 
\cdots \subsetneq C^{n+1} \subsetneq D^{n+1} \subsetneq  C^n \subsetneq D^n \subsetneq 
\cdots \subsetneq  C^1 \subsetneq  D^{1} \subsetneq  C^0. 
\]


\thm{th:GenUZ}{
For every $n\in \N$ with $n\geq 2$:
\begin{itemize}
\item[(a)] $C^1$ is the smallest $\F\in {\mathbb D}$ for which $\UZ(C^0,\F)$ holds.
\item[(b)] If $\F\in {\mathbb D}$ is the smallest 
for which $\UZ(D^1,\F)$ holds,
then $\F\in\{C^1,C^2\}$. 

\item[(c)] $C^2$ is the smallest $\F\in {\mathbb D}$ for which $\UZ(C^1,\F)$ holds.
\item[(d)] $C^n$ is the smallest $\F\in {\mathbb D}$ for which $\UZ(D^n,\F)$ holds.
\item[(e)] $C^n$ is the smallest $\F\in {\mathbb D}$ for which $\UZ(C^n,\F)$ holds.
\end{itemize}
}

\begin{proof}
(a) The interpolation $\UZ(C^0,C^1)$ holds by Theorem~\ref{Thm:Inter}. 
To see the negation of $\UZ(C^0,D^2)$ recall that 
Olevski\v{\i} constructed, in~\cite{Ol}, a continuous function $\varphi_0\colon [0,1]\to\R$ 
which can agree with every $C^2$ function on at most countable set.
In particular, if $\psi$ is a $C^\infty$ map from $\R$ onto $(0,1)$,
then $f_0=\varphi_0\circ \psi$ justifies 
$\neg\UZ(C^0,C^2)$. The same function $f_0$ also justifies 
$\neg\UZ(C^0,D^2)$.\footnote{The fact that Olevski\v{\i}'s function $\varphi_0$
also cannot agree with any $D^2$ function on  an uncountable set was remarked, without a proof,
by Jack B. Brown in \cite{Br1}. Our argument proves that, in fact, every function justifying 
$\neg\UZ(C^0,C^2)$ justifies also $\neg\UZ(C^0,D^2)$.}
Indeed, otherwise, there exists a $D^2$ function $f\colon \R\to\R$ such that
$[f_0=f]$ contains a perfect set $P$.
Then, by Theorem~\ref{thm:UZ}, there exists also a $C^2$ function $g\colon\R\to\R$ 
for which the set $Q\coloneqq[f=g]\cap P$ is uncountable.
Then $Q\subset [f=g]\cap [f_0=f]\subset [f_0=g]$, 
which is impossible, since $f_0$ justifies $\neg\UZ(C^0,C^2)$. 

(b)  The interpolation $\UZ(D^1,C^{1})$ holds either by  Theorem~\ref{Thm:Inter} or Theorem~\ref{thm:UZ}. 
If $\UZ(D^1,D^2)$ does not hold, see Problem~\ref{probInterp}, then 
clearly $\D= C^1$. 
If $\UZ(D^1,D^2)$ holds, then by Theorem~\ref{thm:UZ}, $\UZ(D^1,C^2)$ holds as well. 
Clearly $\UZ(D^1,D^3)$ does not hold, since this would imply $\UZ(C^1,D^3)$, contradicting (c).

(c) The interpolation $\UZ(C^1,C^2)$ is proved by Olevski\v{\i} in~\cite{Ol}. 
(See also \cite[thm 6]{Br2}.)
The negation of $\UZ(C^1,C^3)$ is clearly justified by a function $f_1\colon\R\to\R$
given as $f_1(x)\coloneqq \int_0^x f_0(t) \, dt$, where $f_0$ is as in part (a).
It also justifies $\UZ(C^1,D^3)$, what can be deduced from Theorem~\ref{thm:UZ}
similarly as above. 

(d) $\UZ(D^n,C^{n})$ holds by Theorem~\ref{thm:UZ}. The negation of $\UZ(D^n,D^{n+1})$
is justify by a $C^n$ function $f_n$ from part (e).

(e) Olevski\v{\i} constructed, in~\cite[thm 4]{Ol}, a $C^2$ function $\varphi_2\colon [0,1]\to\R$ 
which can agree with every $C^3$ function on at most countable set.\footnote{In fact it is proved 
in~\cite{Ol} that $[\varphi_2=f]$ is at most countable for every $\alpha\in(0,1)$ and a $C^{2+\alpha}$ map $f\colon [0,1]\to\R$ 
(i.e., such that $f$ is $C^2$ and $f''$ is of H\"older class $\alpha$). 
Similarly he proves that his map $\varphi_1$ cannot be interpolated by any $C^{1+\alpha}$ map.
}
Similarly as above $f_2=\varphi_2\circ \psi$ justifies 
$\neg\UZ(C^2,C^3)$ and, using  Theorem~\ref{thm:UZ}, also $\neg\UZ(C^2,D^3)$.
For $n>2$ a function $f_n$ justifying $\neg\UZ(C^n,D^{n+1})$
is obtained as an $(n-2)$th antiderivative of $f_2$ (i.e., $f_n^{(n-2)}=f_2$). 
\end{proof}

We will finish Section~\ref{sec:InterpolationHigherOrder}  with the following theorem, which was heavily used in the above
proof. The theorem can be deduced from \cite[thm. 4]{Whitney1951} and from 
the proof of \cite[thm. 3.1.15]{Federer}. However, our argument below is considerably different 
from the proofs presented there. 

\thm{thm:UZ}{For every $n\in\N$, perfect $P\subset \R$, and $D^n$ function 
$f\colon\R\to\R$ there exists a $C^n$ function 
	$g\colon\R\to\R$ for which the set $[f=g]\cap P$ is uncountable.
} 

For $n=1$ this can be deduced from the proof of Theorem~\ref{Thm:Inter} 
presented above.\footnote{Since $f$ is differentiable, we can find a perfect subset $P_0$ of $P$
such that $f\restriction P_0$ is Lipschitz. Then, as in
Proposition~\ref{propMain}, we can find perfect subset $Q$ of $P_0$ such that
the conclusion of the proposition holds. This is the only fact used in the proof of Theorem~\ref{Thm:Inter}. 
}
Also, for $n\geq 2$ the theorem can be deduced from 
the (complicated) lemma \cite[lem. 3.7]{KC&JS2018}.
Instead, we will provide below a short argument 
based on 
Theorem~\ref{th:Whitney} and 
the following lemma, which is of independent interest. 

\lem{lem:UZ}{Let $\psi\colon P^2\setminus\Delta\to\R$ be continuous, where $P\subset\R$ is perfect.
If $\delta_1,\delta_2\colon P^2\cap\Delta\to\R$ are continuous,
$\psi_1=\psi\cup\delta_1$ is continuous with respect to the first variable,
and $\psi_2=\psi\cup\delta_2$ is continuous with respect to the second variable,
then $\delta_1=\delta_2$. 
}

\begin{proof}
By way of contradiction, assume that $\delta_1\neq \delta_2$.
Then, there exists an 
$\e>0$ and open non-empty set $U\subset P$ 
such that%
\footnote{Indeed, if $r\in P$ is such that $\delta_1(r,r)\neq \delta_2(r,r)$ and $\e=|\delta_1(r,r)-\delta_2(r,r)|/3$,
then the set 
$U=\{p\in P\colon |\delta_1(p,p)-\delta_1(r,r)|<\e\ \&\ |\delta_2(p,p)-\delta_2(r,r)|<\e\}$ is as needed. 
}
\begin{equation}\label{eq9}
\mbox{  $|\delta_1(p,p)-\delta_2(q,q)|>\e$ for every 
$p,q\in U$.}
\end{equation}
Since $\psi_1$ is continuous with respect to the first variable,
for every $q\in U$ there exists an $n_q\in\N$ such that $|\psi_1(q,q)-\psi_1(p,q)|<\e/2$ 
for every $p\in P$ with $|p-q|<1/n_q$. 
Since $U$ is of second category, there exists an $n\in \N$ such that
$Z=\{q\in U\colon n_q=n\}$ is dense in some non-empty subset $V$ of $U$.
Choose $p\in V$. Since $\psi_2$ is continuous with respect to the second variable,
there exists an open subset $W$ of $V$ containing $p$ and such that
$|\psi_2(p,p)-\psi_2(p,q)|<\e/2$ 
for every $q\in W$.
Choose $q\in W\cap Z$ such that $0<|p-q|<1/n$.
Then $|\psi_1(q,q)-\psi_1(p,q)|<\e/2$, as $|p-q|<1/n=1/n_q$.
In particular,
\[
|\delta_1(p,p)-\delta_2(q,q)|\leq |\psi_1(p,p)-\psi_1(p,q)|+|\psi_2(p,q)-\psi_2(q,q)|<\e,
\]
contradicting (\ref{eq9}).
\end{proof}

\begin{proof}[Proof of Theorem~\ref{thm:UZ}] 
Let $h=f\restriction P$ and define function 
$\Psi\colon P^2\setminus\Delta\to\R$ via formula
\[
\Psi(a,b)\coloneqq
\sum_{k=0}^{n}|q_{h^{(k)}}^{n-k}(a,b)|,
\]
where functions $q_{h^{(k)}}^{n-k}$ are as in Theorem~\ref{th:Whitney}.
Also, let $F\colon P^2\setminus\Delta\to\R$ be given as
$F(a,b)\coloneqq \Psi(a,b)+\Psi(b,a)$. 
By a theorem of Morayne from \cite{Morayne85} applied to $F$,
there exists a perfect $Q\subset P$ for which $F\restriction Q^2\setminus \Delta$ is uniformly 
continuous. Decreasing $Q$, if necessary, we can also assume that $h^{(n)}$ (and so, also $q_{h^{(n)}}^{0}$)
is continuous.
Clearly, $F\restriction Q^2\setminus \Delta$ has a uniformly continuous extension $\bar F\colon Q^2\to\R$.
We claim that
\begin{equation}\label{BrPhi}
\bar F\restriction\Delta\equiv 0.
\end{equation}
To see this, we will use Lemma~\ref{lem:UZ} to the maps $\psi\coloneqq\Psi\restriction Q^2\setminus \Delta$
and $\delta_1,\delta_2\colon P^2\cap\Delta\to\R$, where $\delta_2\equiv 0$ and 
$\delta_1=\bar F\restriction P^2\cap\Delta$. 
The map $\psi_2=\psi\cup\delta_2$ is continuous with respect to the second variable,
since each $q_{h^{(i)}}^{n-i}$ is continuous with respect to the second variable
as long as $h^{(i)}$ has $D^{n-i}$ extension onto $\R$.
(For an easy argument in case when $n-i>0$
see e.g. \cite[prop. 3.2(i)]{KC&JS2018}.) 
The map 
$\psi_1=\psi\cup\delta_1$ is continuous with respect to the first variable,
since $\bar F-\psi_2$ is continuous with respect to the second variable
and $\psi_1(a,b)=(\bar F-\psi_2)(b,a)$ for every $\la a,b\ra\in Q^2$. 
Hence, by Lemma~\ref{lem:UZ}, $\delta_1=\delta_2$.
Thus, for every $a\in Q$ we have
$$\bar F(a,a)=\lim_{b\to a,\, b\in Q} (\psi(a,b)+\psi(b,a))=\delta_1(a,a)+\delta_2(a,a)=0,
$$ proving (\ref{BrPhi}). 

Finally, by (\ref{BrPhi}), 
$$\sum_{k=0}^{n}|q_{h^{(k)}}^{n-k}(a,b)|+\sum_{k=0}^{n}|q_{h^{(k)}}^{n-k}(a,b)|=\bar F(a,b)$$ on $Q^2$,
so it 
is continuous. 
It is easy to see (compare \cite[lem. 3.5]{KC&JS2018}) that this implies that 
$q_{h^{(i)}}^{n-i}$ is continuous on $Q^2$ for every $i\leq n$.
In particular, $f\restriction Q=h\restriction Q$ satisfies the assumptions of 
Theorem \ref{th:Whitney} and so, it admits a $C^n$ extension $g\colon\R\to\R$.
Therefore, $[f=g]\cap P$ contains an uncountable $Q$, as needed.
\end{proof}


\subsection{Smooth functions on $\R$ and joint continuity for maps on $\R^2$} 

Let $\mathcal{H}$ be a class of maps $h \in \mathbb{R}^\mathbb{R}$, each $h$ identified with its graph
$h= \{ \langle x, h(x) \rangle\colon x \in \mathbb{R} \}\subset\R^2$. 
We say that a function $f\colon \mathbb{R}^2 \rightarrow \mathbb{R}$ is 
{\em $\mathcal{H}$-continuous}\/ provided $f\restriction h$ is continuous for every $h \in \mathcal{H}$. 
Also, $h$ is said to be
{\em ${\mathcal{H}}^{*}$-continuous}\/ whenever, for every $h \in \mathcal{H}$, the restrictions 
$f\restriction h$ and $f \restriction h^{-1}$ are continuous, where $h^{-1} = \{ \langle h(x),x \rangle\colon x \in \mathbb{R} \}$. 
Thus, in ${\mathcal H}^*$-continuity, we examine the restrictions of $f$ to the functions $h \in \mathcal{H}$ treated as functions ``from $x$ to $y$'' and as functions ``from $y$ to $x$.'' 
Consider the following general question:
\begin{itemize}
\item[(H)] For which classes ${\mathcal{H}}\subset\mathbb{R}^\mathbb{R}$, the ${\mathcal{H}}^*$-continuity of $f\colon \mathbb{R}^2 \to\R$ 
implies its joint continuity?
\end{itemize}
For the class $\A$ of real analytic functions the question (H) has a negative answer, as 
noted in 1890 by 
Ludwig Scheeffer (1859-85)
(see \cite{Sche} or \cite{Ros})  
and in 1905 by Henri Lebesgue~\cite[pp. 199--200]{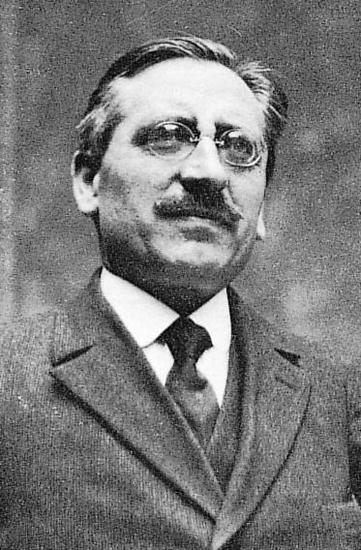}. 
On the other hand, in 1948 text~\cite[pp. 173--176]{Luz}
Nikolai Luzin (1883-1950) proves that $(C^0)^*$-continuity implies joint continuity. 

\begin{figure}[h!]
	\centering
	\begin{tabular}{c}
		\includegraphics[width=0.32\textwidth]{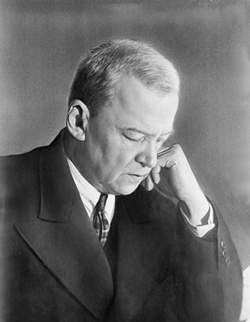} 
	\end{tabular}
	\caption{Nikolai Luzin.}
	\label{pic_NL}
\end{figure}

The final answer to the question (H) for $\H$ in the collection $\{C^n(\R)\colon n<\omega\}$ was given
by Arthur Rosenthal (1887-1959) in his 1955 paper~\cite{Ros}:

\thm{Rosen}{If $f\colon\mathbb{R}^2 \rightarrow \mathbb{R}$ is $(C^1)^*$-continuous, then it is also continuous.
However, there exist discontinuous $(C^2)^*$-continuous functions $g\colon \R^2\to\R$. 
}

More on this subject can be found in a survey \cite{CiMi} of the first author and David Miller. 

\begin{figure}[h!]
	\centering
	\begin{tabular}{cc}
		\includegraphics[width=0.28\textwidth]{Lebesgue} \hspace{.25cm} & \hspace{.25cm} \includegraphics[width=0.31\textwidth]{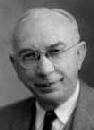} 
	\end{tabular}
	\caption{Henri Le\'on Lebesgue and Arthur Rosenthal.}
	\label{pic_LR}
\end{figure}


\section{Some related results independent of ZFC}\label{sec:independence}

It is a common mathematical knowledge that the smoother a function is,
the more regular is its behavior. Thus, one could expect, that 
there will not be many statements
about the smooth functions (from $\R$ to $\R$)
that cannot be decided within the standard  axioms ZFC of set theory. 
Nevertheless, there are quite a few 
results, loosely related to the preceding material, 
that fall under such category. The goal of this section is to describe them. 

\begin{figure}[h!]
	\centering
	\begin{tabular}{cc}
		\includegraphics[width=0.3\textwidth]{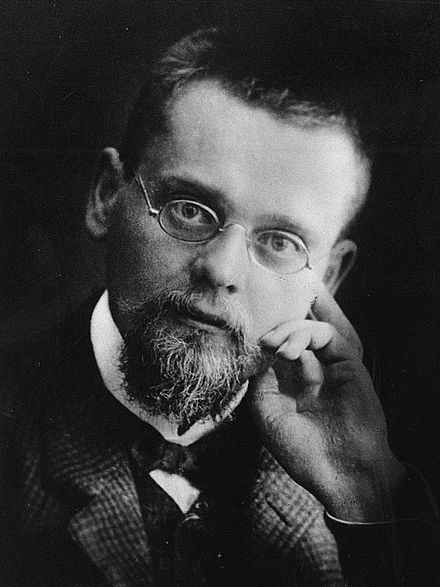} \hspace{.25cm} & \hspace{.25cm} \includegraphics[width=0.285\textwidth]{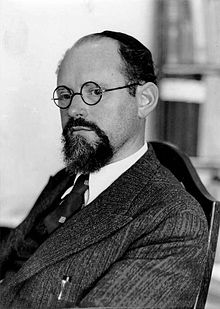} 
	\end{tabular}
	\caption{Ernst Friedrich Ferdinand Zermelo and Abraham Halevi ``Adolf'' Fraenkel.}
	\label{pic_ZF}
\end{figure}

\subsection{Set-theoretical background} 

It is assumed that the reader of this section is familiar with the standard notation and commonly known results of modern set theory, as presented either in \cite{CiBook} or \cite{kunen}. Just to give a brief basic overview, recall that, given a set $X$, its cardinality is denoted by $|X|$. 
The symbol  $\omega$ stands for the cardinality of $\N$ and  $\continuum\coloneqq 2^\omega$ is the cardinality of $\R$. A function shall  be  identified with its graph. The famous Continuum Hypothesis (CH) states that there is no set whose cardinality is strictly between that of 
the integers, $\omega$, and that of the real numbers, $\continuum$. 
CH was advanced by Georg Cantor (1845-1918) in 1878, \cite{cantor}, 
and a problem of its truth or falsehood was the first of Hilbert's 23 problems presented in 1900
at the International Congress of Mathematicians (see \cite{h1,h2}).
It turns out that CH is independent of ZFC---the standard system of axioms of set theory,  including the axiom of choice, introduced by
Ernst Friedrich Ferdinand Zermelo (1871--1953) and Abraham Halevi ``Adolf'' Fraenkel (1891--1965).

The (relative) consistency of CH with ZFC was proved in 1940 paper~\cite{KGodel} by Kurt Friedrich G\"{o}del (1906--1978);
the independence of CH from ZFC (i.e., the consistency of $\neg$CH with ZFC) 
was 
proved in 1963 by Paul Cohen (1934-2007), see~\cite{Cohen1,Cohen2}.

Martin's Axiom (MA), introduced by Donald A. Martin (1940-) and Robert M. Solovay (1938-) in 1970 paper~\cite{MAx}, is a statement that is independent of ZFC. It is implied by CH, but it is independent of ZFC+$\neg$CH.  
Roughly, MA says that all cardinal
numbers less than $\continuum$ behave like $\omega$.
MA has quite the number of  interesting combinatorial, analytic, and topological consequences (see, e.g., \cite{fremlin}).

\begin{figure}[h!]
	\centering
	\begin{tabular}{ccc}
		\includegraphics[width=0.257\textwidth]{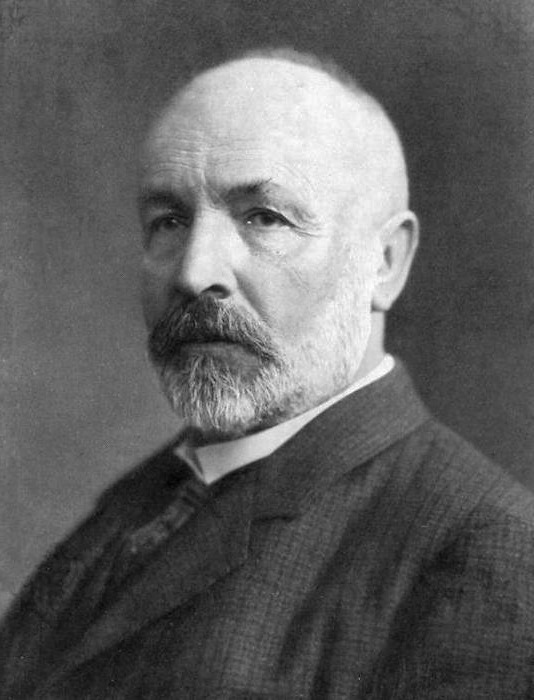} \hspace{.1cm} & \hspace{.1cm} \includegraphics[width=0.26\textwidth]{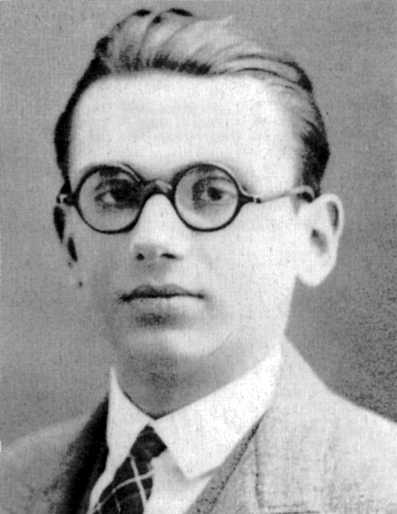} \hspace{.1cm} & \hspace{.1cm} \includegraphics[width=0.25\textwidth]{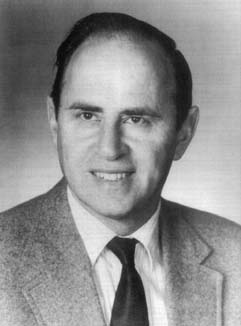}
	\end{tabular}
	\caption{From left to right: Georg Cantor, Kurt G\"{o}del, and Paul Cohen.}
	\label{pic_CGC}
\end{figure}

\subsection{Consistency results related to the interpolation problems} 

By Theorem~\ref{thm:UZ}, 
for every $n\in\N$ and $f\in D^n$ there exists a $g\in C^n$
such that the set $[f=g]$ contains a perfect set. 
A further 
natural question in this context, 
examined by the first author 
and Janusz Pawlikowski in \cite{CiPa,CPAbook},
is about the smallest cardinality $\kappa$ 
such that each $f\in D^n$ is covered by at most $\kappa$-many $C^n$ functions. 
More precisely, we like to know the value of $\cov(D^n,C^n)$, where, for 
$\F,\G\in{\mathbb D}$, 
\[
\cov(\F,\G)\coloneqq\min
\left\{\kappa\colon (\forall f\in \F)
(\exists \G_0\subset \G)\  |\G_0|\leq\kappa\ \&\ f\subset\bigcup\G_0\right\}\!.
\]

An easy ZFC result in this direction is as follows. 

\prop{propCOV}{$\omega<\cov(D^n,C^n)\leq\continuum$ and $\cov(C^{n-1},D^n)= \continuum$ for every $n\in\N$.
}

\begin{proof}
The inequalities $\cov(D^n,C^n)\leq\continuum$ and $\cov(C^{n-1},D^n)\leq \continuum$
are 
obvious, 
since any function $f\colon\R\to\R$
can be covered by $\continuum$-many 
$C^n$ maps (e.g. constant), one for each point $\la x,f(x)\ra\in f$. 

The inequality $\cov(C^0,D^1)\geq \continuum$ is ensured by any function $g_0\in C^0$ 
which has infinite derivative on an uncountable set $Z\subset\R$,
as then for any $f\in D^1$ the set $[f\cap g_0]\cap Z$ is at most countable. 
We can take as $g_0$ the 
Pompeiu's function $g$ from Proposition~\ref{pr:Pom}. 
For general $n\in N$ define functions $g_{n-1}$ inductively, by starting with $g_0$ as above and 
putting $g_{n}(x)=\int_0^x g_{n-1}(t)\, dt$.
Then,  $g_{n}\in C^n$ and $[f\cap g_n]\cap Z$ is at most countable for every $f\in D^n$,
ensuring that 
$\cov(C^{n-1},D^n)\geq \continuum$.

To see that $\cov(D^n,C^n)>\omega$ we need to find an $h_n\in D^n$ which cannot be covered
by countably many $C^n$ maps. For $n=1$ this is witnessed by 
the inverse of a Pompeiu's function from Proposition~\ref{pr:Pom}.
Indeed, if $h_1$ is this map, then for any $C^1$ function $g$ the set $[f=g]$ must be nowhere dense and so,
by the Baire Category Theorem, countably many of such sets cannot cover $\R$. 
For general $n\in N$ define functions $h_n$ inductively, by starting with the above $h_1$ and 
putting $h_{n+1}(x)=\int_0^x h_n(t)\, dt$.
Then the Baire Category Theorem once again ensures that $h_n$ cannot  be covered by 
countably many maps from $C^n$. 
\end{proof}

By Proposition~\ref{propCOV}, CH implies that $\cov(D^n,C^n)=\continuum=\omega_1$. 
Also, even under $\neg$CH, Martin's Axiom implies that $\cov(D^n,C^n)=\continuum$. 
This follows from the argument we used to show that $\cov(D^n,C^n)>\omega$, since, under MA, the union of less than $\continuum$-many 
nowhere dense sets does not cover $\R$. 

Nevertheless, the next theorem shows that it is consistent with ZFC that $$\cov(D^n,C^n)=\omega_1<\continuum.$$
This result can be found \cite{CiPa,CPAbook}.
However, the proofs given in both these sources are incorrect for $n\geq 2$, as 
shown by the authors in paper \cite{KC&JS2018},
which contains also a corrected argument for  the theorem. 

Recall that CPA, {\em the Covering Property Axiom}, is consistent with ZFC.
It holds in the iterated perfect set model.

\thm{CPth1}{{\rm CPA } implies that $\cov(D^n,C^n)=\omega_1<\continuum$ for every $n\in\N$.}

Actually, \cite{KC&JS2018} contains a stronger result: {\em
under {\rm CPA}, for every $n\in\N$ there exists an $\F_n\subset C^n$ of cardinality $\omega_1<\continuum$
that almost covers every $f\in D^n$, in a sense that $f\setminus \bigcup\F_n$ 
has cardinality $\leq\omega_1$.} For $n=0$ this was proved earlier in~\cite{CGNSprep2017}.

Interestingly, Theorem~\ref{CPth1} implies (consistently) the in\-ter\-po\-la\-tion theo\-rems\break 
$\UZ(D^n,C^n)$, which we discussed earlier. Indeed, if $f\in D^n$ is covered by the graphs of $<\continuum$-many functions $g\in C^n$, then for one of these functions $g$ the set $[f=g]$ must be uncountable.

\subsection{Covering $\R^2$ by the graphs of few $C^1$ maps}\label{secR2Cov}

In this subsection, for a function $f\colon \R\to\R$ the symbol $f^{-1}$ will stand 
for the inverse relation, that is,
$f^{-1}=\{\la f(x),x\ra\colon x\in\R\}$. 

Wac\l aw Sierpi\' nski (1882-1969) showed in \cite[Property P1]{sierpinskiCH}  (see, also, \cite{komtot}) that CH is equivalent to the fact that 
there exists a family $\F$ of 
countably many functions from $\R$ to $\R$ (they cannot be ``nice'') such that $\R^2= \bigcup_{f\in\F} (f\cup f^{-1})$. 
The sets $A=\bigcup\F$ (with each vertical section countable) 
and $B=\R^2\setminus A$ (with each horizontal section countable) 
form what is known as {\em Sierpi\' nski's decomposition}. 
To see this result, notice that if CH holds, then $\R$ can be enumerated,  
with no repetitions, as 
$\{r_{\alpha}\colon \alpha < \omega_1\}$.
Also, for every $\alpha<\omega_1$, the set $\{\xi\colon \xi\leq\alpha\}$ 
can be enumerated, with possible repetitions, as $\{\xi(\alpha,n)\colon n<\omega\}$. 
Then $\F$ can be defined as the family of all functions $f_n\colon \R\to\R$, $n<\omega$,
defined as $f_n(r_\alpha)=r_{\xi(\alpha,n)}$.
Conversely, assume that $\mathfrak{c} \ge \omega_2$ and,
by way of contradiction, that there exists a 
Sierpi\' nski's decomposition $\{A,B\}$ of $\R^2$ as above.
Pick $X \subset \R$ of cardinality $\omega_1$ and 
$y\in \R\setminus \bigcup_{x\in X} A_x$, where
$A_x=\{y\colon \la x,y\ra\in A\}$.  
Such a choice is possible, since $\bigcup_{x\in X} A_x$ has cardinality $\leq\omega_1$ (as each  $A_x$ is countable),
while $|\R|=\continuum>\omega_1$.
Also, we can choose $x\in U\setminus B^y$, where
$B^y=\{x\colon \la x,y\ra\in B\}$, since $|B|=\omega_1>|B^y|$. 
But then, $\la x,y\ra\in \R^2\setminus(A\cup B)$, a contradiction. 

It is an easy generalization of Sierpi\' nski's argument (see, e.g., \cite{komtot}) that there exists family $\F$ of cardinality $\kappa$
of functions from $\R$ to $\R$ with 
$\R^2= \bigcup_{f\in\F} (f\cup f^{-1})$
if, and only if, $\continuum\leq \kappa^+$.

\begin{figure}[h!]
	\centering
	\includegraphics[width=0.35\textwidth]{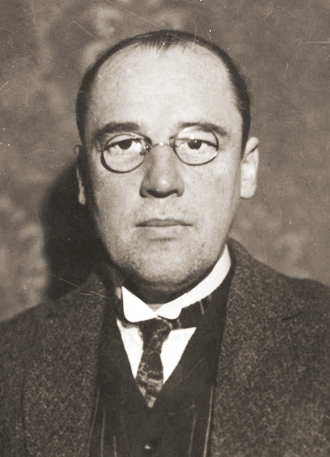}
	\caption{Wac\l aw Sierpi\' nski.}
	\label{pic_Sier}
\end{figure}

None of these results uses continuous functions. In fact, for a countable family $\F$ of continuous functions,
the set $\bigcup_{f\in\F} (f\cup f^{-1})$ is of first category, so it cannot be equal $\R^2$.
Nevertheless, it is consistent with ZFC that $\R^2= \bigcup_{f\in\F} (f\cup f^{-1})$
for a family $\F$ of less than $\continuum$-many continuous functions. In fact, these functions 
can be even $C^1$! 
This follows from the following theorem 
of the first author 
and Janusz Pawlikowski proved in \cite{CiPa}. (Compare also~\cite{CPAbook}.)
Note, that 
an earlier, weaker version of the theorem was proved by Juris Stepr\={a}ns~\cite{St2}.

\thm{CPth2}{{\rm CPA } implies that there exists a family $\F$ of size $\omega_1<\continuum$
of $C^1$ functions such that $\R^2= \bigcup_{f\in\F} (f\cup f^{-1})$.}

Notice that $C^1$ is the best possible smoothness 
for such result, since there is no family $\F\subset D^2$ of size $<\continuum$ with
$\R^2= \bigcup_{f\in\F} (f\cup f^{-1})$.
This is the case, since there exists a continuous injection $h$ from a compact perfect set $P$ into $\R$ 
such that both $h$ and its inverse have infinite second derivative at every point of the domain.
(See \cite{CiPa} or~\cite[example 4.5.1]{CPAbook}.)
This implies that $h\cap (f\cup f^{-1})$ is at most countable for every $f\in D^2$. 
That is, $h\not\subset \bigcup_{f\in\F} (f\cup f^{-1})$ for every $\F\subset D^2$ of size $<\continuum$.  See also Problem~\ref{prA}. 

\subsection{Big continuous and smooth images of sets of cardinality $\continuum$}

In Section~\ref{secR2Cov} we took a property that 
a small family of $\F$ of arbitrary functions from $\R$ to $\R$ can cover $\R^2$ and investigated to what extend the functions in $\F$ can be, consistently, continuous or smooth. 
In a 1983 paper \cite{Mi} Arnold W. Miller considered the similar regularization of the family $\F$ in 
the following statement which, of course, holds for $\F=\R^\R$.  

\begin{itemize}
	\item[$Im^*(\F)$:] For every $S\in[\real]^\continuum$ there is an $f\in\F$ such that $f[S]=[0,1]$. 
\end{itemize}
In particular, he proved there 

\thm{th:AMiller}{It is consistent with ZFC, holds in the iterated perfect set model, that $Im^*(C^0)$ holds. }

This proved that the statement $Im^*(C^0)$ is independent of ZFC axioms, since 
$Im^*(C^0)$ is false under CH and, more generally, MA, see e.g.~\cite{Mi,CPAbook}.\footnote{A (generalized) Luzin's 
set cannot be mapped continuously onto $[0,1]$.
}
A considerably simpler proof of Theorem~\ref{th:AMiller} was given by 
the first author 
and Janusz Pawlikowski in \cite{CiPa,CPAbook}, showing that 
$Im^*(C^0)$ follows from 
the Covering Property Axiom CPA. (In fact, its simplest version 
CPA$_{\rm cube}$.)  

The next natural question is 
wether $Im^*(\F)$ can be also consistent 
for a family $\F\in{\mathbb D}$ strictly smaller than $C^0$.  
Formally, the answer is negative (see e.g. \cite{CiNi}), 
since 
$Im^*(\D^1)$  is false for any $S\in[\real]^\continuum$  of Lebesgue measure zero and $f\in\D^1$, 
as every $f\in\D^1$ satisfies Luzin's condition (N), that is,  
maps Lebesgue measure zero sets onto sets of measure zero. 
Nevertheless, it is easy to see that for $\F=C^0$ the statement 
$Im^*(\F)$ is equivalent to 
\begin{itemize}
	\item[$Im(\F)$:] For every $S\in[\real]^\continuum$ there is $f\in\F$ such that $f[S]$ contains a perfect set. 
\end{itemize}
At the same time, $Im(C^0)$ is equivalent to $Im(C^\infty)$, as indicated in the following theorem
of the first author and Togo Nishiura, see \cite{CiNi}. 

\thm{thAAA}{The properties $Im^*(\C^0)$, $Im(\C^0)$, and $Im(\C^\infty)$ are equivalent (in ZFC). 
In particular, each of these properties is independent of ZFC.}

Finally, let us notice that $Im(\A)$ is false, where $\A$ denotes the class of all real analytic functions. 
A counterexample for $Im(\A)$ is provided in \cite{CiNi}. 

We finish this article by including a section on open problems and potential directions of research.

\section{Final remarks and open problems}\label{sect6}

We would like to begin this section by emphasizing the fact that there is no simpler characterization of being a derivative than the trivial one: 
\begin{quote}
	{\em $f$ is a derivative if, and only if, there exists a function $F$ for which $f=F'$.} 
\end{quote}
Thus, it would certainly be of interest to solve the following.
\pr{pr:der}{
	Find a non-trivial characterization of the derivatives, that is, the functions $h \in \mathbb{R}^\mathbb{R}$ such that $h=f'$ for some $f \in \mathbb{R}^\mathbb{R}$.}

For  more  of  this  problem,  see  the  1947  paper  \cite{Za}  of  
Zahorski or the monograph \cite{Bru1978} of Bruckner. 
Notice, that Chris Freiling gives in \cite{Freiling} that such 
simpler characterization does not exist.

The next problem, related to our discussion in Section~\ref{sec:Nice}, can be found in \cite{Szuca} and \cite{BrCi}. 

\pr{pr:comp}{If $f$ is 
a composition of finite numbers of derivatives from $[0,1]$ into itself, must the graph of $f$ be connected in $\R^2$? 
}

The following problem, related to the discussion in Section~\ref{sec:dynamics}, comes from \cite[problem 1]{Ci112}). 
Notice that there is no function $h$ as in the problem which could be extended to a $C^1$ map from $\R$ to $\R^2$, see \cite[thm. 3.1]{Ci112}.

\pr{prPeano}{Does there exist a compact perfect $P\subset\R$
and a map $h$ from $P$ onto $P^2$ such that $h$ is 
$D^1$ (i.e., $h$ has $D^1$ coordinates)?}

The next problem, on a $D^n$ analog of 
Whitney's $C^n$ Extension Theorem~\ref{th:Whitney}, comes from Section~\ref{sec:higherExt}.
 
\pr{probJarnik}{Find, for every $n\geq 2$, a characterization of all $D^n$  functions $f$ from perfect $P\subset\R$
into $\R$ that  admit $D^n$ extensions $\bar f\colon \R\to\R$.}

Since every $D^n$ map $\bar f$ is also $C^{n-1}$, any $D^n$-extendable function $f$ 
must satisfy property ($W_{n-1}$) from Theorem~\ref{th:Whitney}. 
By Theorem \ref{thmMC&KC}(a), for 
$n=1$ this is also sufficient condition. 
However, Example \ref{ex111} shows that this is not strong enough condition for $n\geq 2$.

The following problem comes from our discussion of Zahorski-Ulam problem presented in 
Section~\ref{sec:InterpolationHigherOrder}. 

\pr{probInterp}{Is the following interpolation true? 
\begin{itemize}
\item[$\UZ(D^1,D^2)$:] {\it For every $g\in D^1$ there is an $f\in D^2$ with uncountable $[f=g]$.}
\end{itemize}
}

We know, by Theorem~\ref{CPth1}, that it is consistent with ZFC (follows from CPA) that $\real^2$ can be covered 
by less than $\continuum$-many graphs of $C^1$ maps.
Also, it is consistent with ZFC (follows from CH) that $\real^2$ cannot be covered 
by less than $\continuum$-many graphs of $C^0$ maps.
The next problem asks how the existence of such coverings by $C^0$  and $C^1$ maps are related. 

\pr{prA}{Can it be proved, in ZFC, that if $\R^2= \bigcup_{f\in\F} (f\cup f^{-1})$
for a family $\F\subset C^0$ of size $<\continuum$,
than the same is true for some $\F\subset C^1$ of size $<\continuum$?
How about the family $D^1$ in the same setting?}

\section*{About the authors}

\textbf{Krzysztof C. Ciesielski} received his Master and Ph.D. degrees in Pure Mathematics from Warsaw University, Poland, in 1981 and 1985, respectively. He works at West Virginia University since 1989. In addition, since 2006 he holds a position of Adjunct Professor in the Department of Radiology at the University Pennsylvania. He is author of three books and over 130 journal research articles. Ciesielski's research interests include both pure mathematics (real analysis, topology, set theory) and applied mathematics (image processing, especially image segmentation). He is an editor of Real Analysis Exchange, Journal of Applied Analysis, and Journal of Mathematical Imaging and Vision.

\medskip

\textbf{Juan B. Seoane--Sep\'ulveda} received his first Ph.D. at the Universidad de C\'adiz (Spain) jointly with Universit\"{a}t Karlsruhe (Germany) in 2005. His second Ph.D. was earned at Kent State University (Kent, Ohio, USA) in 2006. His main interests include real analysis, set theory, Banach space geometry, and lineability. He has authored two books and over 120 journal research papers. He is currently a professor at Universidad Complutense de Madrid (Spain) and an editor of Real Analysis Exchange.

\section*{Acknowledgments}

We would like to express our gratitude to Prof. Andrew M. Bruckner for his invaluable advise and encouragement towards this work. J.B. Seoane-Sep\'ulveda was supported by grant MTM2015-65825-P.


\begin{bibdiv}
	\begin{biblist}
		
		\bib{ABLP}{article}{
			author={Agronsky, S.},
			author={Bruckner, A. M.},
			author={Laczkovich, M.},
			author={Preiss, D.},
			title={Convexity conditions and intersections with smooth functions},
			journal={Trans. Amer. Math. Soc.},
			volume={289},
			date={1985},
			number={2},
			pages={659--677},
			doi={10.2307/2000257},
		}
		
		\bib{book2016}{book}{
			author={Aron, Richard M.},
			author={Bernal-Gonz\'alez, Luis},
			author={Pellegrino, Daniel M.},
			author={Seoane-Sep\'ulveda, Juan B.},
			title={Lineability: the search for linearity in mathematics},
			series={Monographs and Research Notes in Mathematics},
			publisher={CRC Press, Boca Raton, FL},
			date={2016},
			pages={xix+308},
		}
		
		\bib{AGS}{article}{
			author={Aron, R.M.},
			author={Gurariy, V.I.},
			author={Seoane-Sep\'{u}lveda, J.B.},
			title={Lineability and spaceability of sets of functions on \(\mathbb R\)},
			journal={Proc. Amer. Math. Soc.},
			volume={133},
			date={2005},
			number={3},
			pages={795--803},
		}
		
		\bib{ALP}{article}{
			author={Aversa, V.},
			author={Laczkovich, M.},
			author={Preiss, D.},
			title={Extension of differentiable functions},
			journal={Comment. Math. Univ. Carolin.},
			volume={26},
			date={1985},
			number={3},
			pages={597--609},
		}
		
		\bib{Baire}{article}{
			author={Baire, R.L.},
			title={Sur les fonctions de variables r\'eelles},
			language={French},
		journal={ Ann. Matern. Pura ed Appl. (ser. 3)},
		volume={3},
		date={1899},
		pages={1--123},
		}
		
		\bib{bams2014}{article}{
			author={Bernal-Gonz\'alez, L.},
			author={Pellegrino, D.},
			author={Seoane-Sep\'ulveda, J.B.},
			title={Linear subsets of nonlinear sets in topological vector spaces},
			journal={Bull. Amer. Math. Soc. (N.S.)},
			volume={51},
			date={2014},
			number={1},
			pages={71--130},
			doi={10.1090/S0273-0979-2013-01421-6},
		}

\bib{WET2}{article}{
	author={Bierstone, E.},
	title={Differentiable functions},
	journal={Bol. Soc. Brasil. Mat.},
	volume={11},
	date={1980},
	number={2},
	pages={139--189},
}
		
		\bib{BKO}{article}{
			author={Boro\'nski, J.P.},
			author={Kupka, J. },
			author={Oprocha, P.},
			title={Edrei's Conjecture Revisited},
			journal={Ann. Henri Poincar\'e},
			volume={19},
			date={2018},
			number={1},
			pages={267--281},
		}
		
		\bib{Br1}{article}{
			author={Brown, J.B.},
			title={Restriction theorems in real analysis},
			journal={Real Anal. Exchange},
			volume={20},
			date={1994/95},
			number={2},
			pages={510--526},
		}

		\bib{Br2}{article}{
			author={Brown, J.B.},
			title={Intersections of continuous, Lipschitz, H\"older class, and smooth functions},
			journal={Proc. Amer. Math. Soc.},
			volume={123},
			date={1995},
			number={4},
			pages={1157--1165},
		}
		
		\bib{Bru1978}{book}{
			author={Bruckner, A.M.},
			title={Differentiation of real functions},
			series={Lecture Notes in Mathematics},
			volume={659},
			publisher={Springer, Berlin},
			date={1978},
			pages={x+247},
		}
		
		\bib{Br}{article}{
			author={Bruckner, A.M.},
			title={Some new simple proofs of old difficult theorems},
			journal={Real Anal. Exchange},
			volume={9},
			date={1983/84},
			number={1},
			pages={63--78},
		}

		\bib{Bru1994}{book}{
			author={Bruckner, A.M.},
			title={Differentiation of real functions},
			series={CRM Monograph Series},
			volume={5},
			edition={2},
			publisher={American Mathematical Society, Providence, RI},
			date={1994},
			pages={xii+195},
		}

		\bib{BrCi}{article}{
			author={Bruckner, A.M.},
			author={Ciesielski, K.C.},
			title={On composition of derivatives},
			journal={Real Anal. Exchange},
			volume={43},
			date={2018},
			number={1},
			pages={235--238},
			doi={10.14321/realanalexch.43.1.0235},
		}
	
	\bib{BDG}{article}{
		author={Bruckner, A. M.},
		author={Davies, Roy O.},
		author={Goffman, C.},
		title={Transformations into Baire $1$ functions},
		journal={Proc. Amer. Math. Soc.},
		volume={67},
		date={1977},
		number={1},
		pages={62--66},
	}
	
	\bib{BrucknerGarg}{article}{
		author={Bruckner, A.M.},
		author={Garg, K.M.},
		title={The level structure of a residual set of continuous functions},
		journal={Trans. Amer. Math. Soc.},
		volume={232},
		date={1977},
		pages={307--321},
	}

\bib{buczolich2017}{article}{
	author={Buczolich, Z.},
	title={Monotone and convex restrictions of continuous functions},
	journal={J. Math. Anal. Appl.},
	volume={452},
	date={2017},
	number={1},
	pages={552--567},
	doi={10.1016/j.jmaa.2017.03.026},
}

	\bib{continuidad03}{article}{
		author={Cabana, H.J.},
		author={Mu\~noz-Fern\'andez, G.A.},
		author={Seoane-Sep\'ulveda, J.B.},
		title={Connected polynomials and continuity},
		journal={J. Math. Anal. Appl.},
		volume={462},
		date={2018},
		number={1},
		pages={298--304},
	}
	
		\bib{cantor}{article}{
		author = {Cantor, G.},
		journal = {Journal f\"ur die reine und angewandte Mathematik},
		pages = {242--258},
		title = {Ein Beitrag zur Mannigfaltigkeitslehre},
		url = {http://eudml.org/doc/148353},
		volume = {84},
		year = {1877},
	}
	
	\bib{carielloseoane}{article}{
		author={Cariello, D.},
		author={Seoane-Sep\'ulveda, J.B.},
		title={Basic sequences and spaceability in $\ell_p$ spaces},
		journal={J. Funct. Anal.},
		volume={266},
		date={2014},
		number={6},
		pages={3797--3814},
	}

		\bib{cauchy}{book}{
			author={Cauchy, A.-L.},
			title={Analyse alg\'ebrique},
			language={French},
			series={Cours d'Analyse de l'\'Ecole Royale Polytechnique. [Course in
				Analysis of the \'Ecole Royale Polytechnique]},
			note={Reprint of the 1821 edition},
			publisher={\'Editions Jacques Gabay, Sceaux},
			date={1989},
			pages={xvi+577},
		}
		
		\bib{cours}{book}{
			author={Cauchy, A.-L.},
			title={Cours d'analyse de l'\'Ecole Royale Polytechnique},
			language={French},
			series={Cambridge Library Collection},
			note={Reprint of the 1821 original},
			publisher={Cambridge University Press, Cambridge},
			date={2009},
			pages={ii+xvi+576},
		}
		
		\bib{MC&KC}{article}{
			author={Ciesielska, M.},
			author={Ciesielski, K.C.},
			title={Differentiable extension theorem: a lost proof of V. Jarn\'\i k},
			journal={J. Math. Anal. Appl.},
			volume={454},
			date={2017},
			number={2},
			pages={883--890},
			doi={10.1016/j.jmaa.2017.05.032},
		}

		\bib{CiBook}{book}{
			author={Ciesielski, K.C.},
			title={Set Theory for the Working Mathematician}, 
			series={Lecture Notes in Mathematics},
			collection={London Mathematical Society Student Texts},
			volume={659},
			publisher={Cambridge University Press},
			date={1978},
			year={1997}, 
			place={Cambridge},
		}
		
		\bib{KC:Monthly}{article}{
			author={Ciesielski, K.C.},
			title={Monsters in calculus},
			journal={Amer. Math. Monthly},
			date={2018},
			status={{\em accepted for publication, {\rm see www.math.wvu.edu/\~{}kcies/prepF/131.DifferentiableMonsterForCalc.pdf}}},
		}
		
		\bib{KCprep2017}{article}{
			author={Ciesielski, K.C.},
			title={Lipschitz restrictions of continuous functions and a simple proof of $C^1$ interpolation theorem},
			journal={Real Anal. Exchange},
			volume={43},
			date={2018},
			number={2},
			status={{\em in print}},
		}
		
		\bib{CGNSprep2017}{article}{
			author={Ciesielski, K.C.},
			author={G\'amez-Merino, J.L.},
			author={Natkaniec, T.},
			author={Seoane-Sep\'ulveda, J.B.},           
			title={On functions that are almost continuous and perfectly everywhere surjective but not Jones. Lineability and additivity},
			journal={Topology Appl.},
			volume={235},
			date={2018},
			pages={73--82},
			doi={10.1016/j.topol.2017.12.017},
		}

		\bib{Ci112}{article}{
			author={Ciesielski, K.C.},
			author={Jasinski, J.},
			title={Smooth Peano functions for perfect subsets of the real line},
			journal={Real Anal. Exchange},
			volume={39},
			date={2013/14},
			number={1},
			pages={57--72},
		}
		
		\bib{Ci121}{article}{
			author={Ciesielski, K.C.},
			author={Jasinski, J.},
			title={An auto-homeomorphism of a Cantor set with derivative zero everywhere},
			journal={J. Math. Anal. Appl.},
			volume={434},
			date={2016},
			number={2},
			pages={1267--1280},
			doi={10.1016/j.jmaa.2015.09.076},
		}
		
		\bib{CJ2017}{article}{
			author={Ciesielski, K.C.},
			author={Jasinski, J.},
			title={Fixed point theorems for maps with local and pointwise contraction properties},
			journal={Canad. J. Math.},
			volume={70},
			date={2018},
			number={3},
			pages={538--594},
			doi={10.4153/CJM-2016-055-2},
		}
		
		\bib{CLO1990}{article}{
			author={Ciesielski, K.C.},
			author={Larson, L.},
			author={Ostaszewski, K.},
			title={Differentiability and density continuity},
			journal={Real Anal. Exchange},
			volume={15},
			date={1989/90},
			number={1},
			pages={239--247},
			issn={0147-1937},
		}
		
		\bib{CLObook}{article}{
			author={Ciesielski, K.C.},
			author={Larson, L.},
			author={Ostaszewski, K.},
			title={$\scr I$-density continuous functions},
			journal={Mem. Amer. Math. Soc.},
			volume={107},
			date={1994},
			number={515},
			pages={xiv+133},
			issn={0065-9266},
			doi={10.1090/memo/0515},
		}
				
		\bib{CiMi}{article}{
			author={Ciesielski, K.C.},
			author={Miller, D.},
			title={A continuous tale on continuous and separately continuous functions},
			journal={Real Anal. Exchange},
			volume={41},
			date={2016},
			number={1},
			pages={19--54},
		}

		\bib{CiNi}{article}{
			author={Ciesielski, K.C.},
			author={Nishiura, T.},
			title={Continuous and smooth images of sets},
			journal={Real Anal. Exchange},
			volume={37},
			date={2011/12},
			number={2},
			pages={305--313},
		}

		\bib{CPAbook}{book}{
			author={Ciesielski, K.C.},
			author={Pawlikowski, J.},
			title={The covering property axiom, CPA: A combinatorial core of the iterated perfect set model},
			series={Cambridge Tracts in Mathematics},
			volume={164},
			publisher={Cambridge University Press, Cambridge},
			date={2004},
			pages={xxii+174},
			doi={10.1017/CBO9780511546457},
		}

		\bib{CiPa}{article}{
			author={Ciesielski, K.C.},
			author={Pawlikowski, J.},
			title={Small coverings with smooth functions under the Covering Property Axiom},
			journal={Canad. J. Math.},
			volume={57},
			date={2005},
			number={3},
			pages={471--493},
		}

		\bib{KC&JS2018}{article}{
			author={Ciesielski, K.C.},
			author={Seoane-Sep\'ulveda, J.B.},
			title={Simultaneous small coverings by smooth functions under the covering property axiom},
			journal={Real Anal. Exchange},
			volume={43},
			date={2018},
			number={2},
			status={{\em in print}},
		}

		\bib{CiPan}{article}{
			author={Ciesielski, K.C.},
			author={Pan, Cheng-Han},
			title={Doubly paradoxical functions of one variable},
			journal={J. Math. Anal. Appl.},
			volume={464},
			date={2018},
			number={1},
			pages={274--279},
			doi={10.1016/j.jmaa.2018.04.012},
		}
		
		\bib{Cohen1}{article}{
			author={Cohen, P.},
			title={The independence of the continuum hypothesis},
			journal={Proc. Nat. Acad. Sci. U.S.A.},
			volume={50},
			date={1963},
			pages={1143--1148},
		}
	
		\bib{Cohen2}{article}{
			author={Cohen, P.},
			title={The independence of the continuum hypothesis. II},
			journal={Proc. Nat. Acad. Sci. U.S.A.},
			volume={51},
			date={1964},
			pages={105--110},
		}

		\bib{amm2014}{article}{
			author={Conejero, J.A.},
			author={Jim\'enez-Rodr\'\i guez, P.},
			author={Mu\~noz-Fern\'andez, G.A.},
			author={Seoane-Sep\'ulveda, J.B.},
			title={When the identity theorem ``seems'' to fail},
			journal={Amer. Math. Monthly},
			volume={121},
			date={2014},
			number={1},
			pages={60--68},
			doi={10.4169/amer.math.monthly.121.01.060},
		}
		
		\bib{CO}{article}{
			author={Cs\"ornyei, M.},
			author={O'Neil, T.C.},
			author={Preiss, David},
			title={The composition of two derivatives has a fixed point},
			journal={Real Anal. Exchange},
			volume={26},
			date={2000/01},
			number={2},
			pages={749--760},
		}
		
		\bib{Darb}{article}{
			author={Darboux, G.},
			title={M\'emoire sur les fonctions discontinues},
			language={French},
			journal={Ann. Sci. \'Ecole Norm. Sup. (2)},
			volume={4},
			date={1875},
			pages={57--112},
		}
		
		\bib{De}{article}{
			author={Denjoy, A.},
			title={Sur les fonctions d\'eriv\'ees sommables},
			language={French},
			journal={Bull. Soc. Math. France},
			volume={43},
			date={1915},
			pages={161--248},
		}
		
		\bib{Do}{article}{
			author={Downarowicz, T.},
			title={Survey of odometers and Toeplitz flows},
			conference={ title={Algebraic and topological dynamics}, },
			book={ series={Contemp. Math.}, volume={385}, publisher={Amer. Math. Soc., Providence, RI}, },
			date={2005},
			pages={7--37},
			doi={10.1090/conm/385/07188},
		}
		
		\bib{Edel2}{article}{
			author={Edelstein, M.},
			title={On fixed and periodic points under contractive mappings},
			journal={J. London Math. Soc.},
			volume={37},
			date={1962},
			pages={74--79},
			doi={10.1112/jlms/s1-37.1.74},
		}
		
		\bib{EKP}{article}{
			author={Elekes, M.},
			author={Keleti, T.},
			author={Prokaj, Vilmos},
			title={The composition of derivatives has a fixed point},
			journal={Real Anal. Exchange},
			volume={27},
			date={2001/02},
			number={1},
			pages={131--140},
		}
		
		\bib{enfloseoane}{article}{
			author={Enflo, P.H.},
			author={Gurariy, V.I.},
			author={Seoane-Sep\'ulveda, J.B.},
			title={Some results and open questions on spaceability in function
				spaces},
			journal={Trans. Amer. Math. Soc.},
			volume={366},
			date={2014},
			number={2},
			pages={611--625},
		}
		
		\bib{Federer}{book}{
   author={Federer, H.},
   title={Geometric Measure Theory},
   publisher={Springer-Verlag, New York},
   date={1969},
}

\bib{WET3}{article}{
	author={Fefferman, C.},
	title={Whitney's extension problems and interpolation of data},
	journal={Bull. Amer. Math. Soc. (N.S.)},
	volume={46},
	date={2009},
	number={2},
	pages={207--220},
} 

\bib{filipczak1966}{article}{
	author={Filipczak, F.M.},
	title={Sur les fonctions continues relativement monotones},
	language={French},
	journal={Fund. Math.},
	volume={58},
	date={1966},
	pages={75--87},
	doi={10.4064/fm-58-1-75-87},
}

		\bib{Foran}{book}{
			author={Foran, J.},
			title={Fundamentals of real analysis},
			series={Monographs and Textbooks in Pure and Applied Mathematics},
			volume={144},
			publisher={Marcel Dekker, Inc., New York},
			date={1991},
			pages={xiv+473},
		}
		
		\bib{Freiling}{article}{
			author={Freiling, C.},
			title={On the problem of characterizing derivatives},
			journal={Real Anal. Exchange},
			volume={23},
			date={1997/98},
			number={2},
			pages={805--812},
		}
		
		\bib{fremlin}{book}{
			author={Fremlin, D.H.},
			title={Consequences of Martin's axiom},
			series={Cambridge Tracts in Mathematics},
			volume={84},
			publisher={Cambridge University Press, Cambridge},
			date={1984},
		}
		
		\bib{continuidad02}{article}{
			author={G\'amez-Merino, Jos\'e L.},
			author={Mu\~noz-Fern\'andez, Gustavo A.},
			author={Pellegrino, Daniel},
			author={Seoane-Sep\'ulveda, Juan B.},
			title={Bounded and unbounded polynomials and multilinear forms:
				characterizing continuity},
			journal={Linear Algebra Appl.},
			volume={436},
			date={2012},
			number={1},
			pages={237--242},
		}
	
		\bib{PAMS2010}{article}{
			author={G\'{a}mez-Merino, J.L.},
			author={Mu\~{n}oz-Fern\'{a}ndez, G.A.},
			author={S\'{a}nchez, V.M.},
			author={Seoane-Sep\'{u}lveda, J.B.},
			title={Sierpi\'nski-Zygmund functions and other problems on lineability},
			journal={Proc. Amer. Math. Soc.},
			volume={138},
			date={2010},
			number={11},
			pages={3863--3876},
		}
		
		\bib{continuidad01}{article}{
			author={G\'amez-Merino, Jos\'e L.},
			author={Mu\~noz-Fern\'andez, Gustavo A.},
			author={Seoane-Sep\'ulveda, Juan B.},
			title={A characterization of continuity revisited},
			journal={Amer. Math. Monthly},
			volume={118},
			date={2011},
			number={2},
			pages={167--170},
		}
		
		\bib{Grag1963}{article}{
			author={Garg, K.M.},
			title={On level sets of a continuous nowhere monotone function},
			journal={Fund. Math.},
			volume={52},
			date={1963},
			pages={59--68},
		}
		
		\bib{dover2003}{book}{
			author={Gelbaum, B.R.},
			author={Olmsted, J.M.H.},
			title={Counterexamples in analysis},
			note={Corrected reprint of the second (1965) edition},
			publisher={Dover Publications, Inc., Mineola, NY},
			date={2003},
			pages={xxiv+195},
		}
		
		\bib{KGodel}{article}{
			author={G\"odel, K.},
			title={The consistency of the axiom of choice and of the generalized
				continuum hypothesis with the axioms of set theory},
			language={Russian},
			journal={Uspehi Matem. Nauk (N.S.)},
			volume={3},
			date={1948},
			number={1(23)},
			pages={96--149},
		}
		
		\bib{hamlett}{article}{
			author={Hamlett, T.R.},
			title={Compact maps, connected maps and continuity},
			journal={J. London Math. Soc. (2)},
			volume={10},
			date={1975},
			pages={25--26},
		}
	
		\bib{h1}{article}{
			author={Hilbert, David},
			title={Mathematical problems},
			note={Reprinted from Bull. Amer. Math. Soc. {\bf 8} (1902), 437--479},
			journal={Bull. Amer. Math. Soc. (N.S.)},
			volume={37},
			date={2000},
			number={4},
			pages={407--436},
		}
		
		\bib{h2}{article}{
			author={Hilbert, David},
			title={Mathematical problems},
			journal={Bull. Amer. Math. Soc.},
			volume={8},
			date={1902},
			number={10},
			pages={437--479},
		}
		
		\bib{Bolzano1822}{article}{
			author={Jarn\'\i k, V.},
			title={O funci Bolzanov\v e [On Bolzano's fucntion]},
			language={Czech},
			journal={\v Casopis P\v est. Mat.},
			volume={51},
			date={1922},
			pages={248--266},
		}
		
		\bib{Jarnik}{article}{
			author={Jarn\'\i k, V.},
			title={Sur l'extension du domaine de d\' efinition des fonctions d'une variable, qui laisse intacte la d\'e rivabilit\'{e} de la fonction},
			journal={Bull. Internat. de l'Acad\' emie des Sciences de Boh\^eme},
			date={1923},
			pages={1--5},
		}
		
		\bib{JarnikCzech}{article}{
			author={Jarn\'\i k, V.},
			title={O roz\v s\'\i \v ren{\'\i } defini\v cn\'\i ho oboru funkc{\'\i } jedn{\' e} prom{\v e}nn{\' e}, p{\v r}i{\v c}em{\v z} z\r {u}st{\' a}v{\' a} zachov{\' a}na derivabilita funkce [On the extension of the domain of a function preserving differentiability of the function]},
			language={Czech},
			journal={Rozpravy {\v C}es. akademie, II. t{\v r}.},
			volume={XXXII},
			number={15},
			pages={1--5},
			date={1923},
		}
		
		\bib{JP}{book}{
			author={Jarnicki, M.},
			author={Pflug, P.},
			title={Continuous nowhere differentiable functions: The monsters of analysis},
			series={Springer Monographs in Mathematics},
			publisher={Springer},
			date={2015},
			pages={xii+299},
			doi={10.1007/978-3-319-12670-8},
		} 
		
		\bib{JMSBBMS}{article}{
			author={Jim{\'e}nez-Rodr{\'{\i}}guez, P.},
			author={Mu{\~n}oz-Fern{\'a}ndez, G.A.},
			author={Seoane-Sep{\'u}lveda, J.B.},
			title={On Weierstrass' Monsters and lineability},
			journal={Bull. Belg. Math. Soc. Simon Stevin},
			volume={20},
			date={2013},
			number={4},
			pages={577--586},
		}

\bib{kahanekatznelson2009}{article}{
	author={Kahane, Jean-Pierre},
	author={Katznelson, Yitzhak},
	title={Restrictions of continuous functions},
	journal={Israel J. Math.},
	volume={174},
	date={2009},
	pages={269--284},
	doi={10.1007/s11856-009-0114-x},
}

		\bib{KS}{article}{
			author={Katznelson, Y.},
			author={Stromberg, K.},
			title={Everywhere differentiable, nowhere monotone, functions},
			journal={Amer. Math. Monthly},
			volume={81},
			date={1974},
			pages={349--354},
			doi={10.2307/2318996},
		}
		
		\bib{strangefunctions2}{book}{
			author={Kharazishvili, A. B.},
			title={Strange functions in real analysis},
			series={Pure and Applied Mathematics (Boca Raton)},
			volume={272},
			edition={2},
			publisher={Chapman \& Hall/CRC, Boca Raton, FL},
			date={2006},
			pages={xii+415},
			isbn={978-1-58488-582-5},
			isbn={1-58488-582-3},
		}

\bib{kharazishvili2011}{article}{
	author={Kharazishvili, A.},
	title={Some remarks concerning monotone and continuous restrictions of real-valued functions},
	language={English, with English and Georgian summaries},
	journal={Proc. A. Razmadze Math. Inst.},
	volume={157},
	date={2011},
	pages={11--21},
}

		\bib{strangefunctions3}{book}{
			author={Kharazishvili, Alexander},
			title={Strange functions in real analysis},
			edition={3},
			publisher={CRC Press, Boca Raton, FL},
			date={2018},
			pages={xiii+426},
			isbn={978-1-4987-7314-0},
		}

		\bib{Utz}{article}{
			author={Klee, V. L.},
			author={Utz, W. R.},
			title={Some remarks on continuous transformations},
			journal={Proc. Amer. Math. Soc.},
			volume={5},
			date={1954},
			pages={182--184},
			issn={0002-9939},
			review={\MR{0060814}},
			doi={10.2307/2032219},
		}
		
		\bib{KocKolar2016}{article}{
			author={Koc, M.},
			author={Kol\'a\v r, J.},
			title={Extensions of vector-valued Baire one functions with preservation of points of continuity},
			journal={J. Math. Anal. Appl.},
			volume={442},
			date={2016},
			number={1},
			pages={138--148},
			doi={10.1016/j.jmaa.2016.04.052},
		}
		
		\bib{KocKolar2017}{article}{
			author={Koc, M.},
			author={Kol\'a\v r, J.},
			title={Extensions of vector-valued functions with preservation of derivatives},
			journal={J. Math. Anal. Appl.},
			volume={449},
			date={2017},
			number={1},
			pages={343--367},
			doi={10.1016/j.jmaa.2016.11.080},
		}
		
		\bib{Koc}{article}{
			author={Koc, M.},
			author={Zaj\'\i \v cek, L.},
			title={A joint generalization of Whitney's $C^1$ extension theorem and Aversa-Laczkovich-Preiss' extension theorem},
			journal={J. Math. Anal. Appl.},
			volume={388},
			date={2012},
			number={2},
			pages={1027--1037},
			doi={10.1016/j.jmaa.2011.10.049},
		}
		
		\bib{komtot}{book}{
			author={Komj\'ath, P.},
			author={Totik, V.},
			title={Problems and theorems in classical set theory},
			series={Problem Books in Mathematics},
			publisher={Springer, New York},
			date={2006},
			pages={xii+514},
		}
		
		\bib{Ko1}{article}{
			author={K\"opcke, A.},
			title={Ueber Differentiirbarkeit und Anschaulichkeit der stetigen Functionen},
			language={German},
			journal={Math. Ann.},
			volume={29},
			date={1887},
			number={1},
			pages={123--140},
			doi={10.1007/BF01445174},
		}
		
		\bib{Ko2}{article}{
			author={K\"opcke, A.},
			title={Ueber eine durchaus differentiirbare, stetige Function mit Oscillationen in jedem Intervalle},
			language={German},
			journal={Math. Ann.},
			volume={34},
			date={1889},
			number={2},
			pages={161--171},
			doi={10.1007/BF01453433},
		}
		
		\bib{Ko3}{article}{
			author={K\"opcke, A.},
			title={Ueber eine durchaus differentiirbare, stetige Function mit Oscillationen in jedem Intervalle},
			language={German},
			journal={Math. Ann.},
			volume={35},
			date={1889},
			number={1-2},
			pages={104--109},
			doi={10.1007/BF01443873},
		}
		
		\bib{kunen}{book}{
			author={Kunen, K.},
			title={Set theory. An introduction to independence proofs},
			series={Studies in Logic and the Foundations of Mathematics},
			volume={102},
			publisher={North-Holland Publishing Co., Amsterdam-New York},
			date={1980},
		}
		
		\bib{La1984}{article}{
			author={Laczkovich, M.},
			title={Differentiable restrictions of continuous functions},
			journal={Acta Math. Hungar.},
			volume={44},
			date={1984},
			number={3-4},
			pages={355--360},
			doi={10.1007/BF01950290},
		}
		
		\bib{Lebesgue}{article}{
			author={Lebesgue, H.},
			title={Sur les fonctions repr\'esentables analytiquement},
			journal={J. Math. Pure Appl.},
			date={1905},
			volume={6},
			pages={139-216},
		}
		
		\bib{MAx}{article}{
			author={Martin, D.A.},
			author={Solovay, R.M.},
			title={Internal Cohen extensions},
			journal={Ann. Math. Logic},
			volume={2},
			date={1970},
			number={2},
			pages={143--178},
		}
		
		\bib{Luz}{book}{
   author={Luzin, N. N.},
   title={Teoriya funkci\u\i \ de\u\i stvitel\cprime nogo peremennogo. Ob\v s\v caya \v cast\cprime },
   language={Russian},
   note={2d ed.},
   publisher={Gosudarstvennoe U\v cebno-Pedagogi\v ceskoe Izdatel\cprime stvo
   Ministerstva Prosve\v s\v ceniya SSSR, Moscow},
   date={1948},
   pages={318},
}

		\bib{Marik}{article}{
			author={Ma\v r\'\i k, J.},
			title={Derivatives and closed sets},
			journal={Acta Math. Hungar.},
			volume={43},
			date={1984},
			number={1-2},
			pages={25--29},
			doi={10.1007/BF01951320},
		}

		\bib{ScotB}{collection}{
			title={The Scottish Book},
			editor={Mauldin, R.D.},
			note={Mathematics from the Scottish Caf\'e; Including selected papers presented at the Scottish Book Conference held at North Texas State University, Denton, Tex., May 1979},
			publisher={Birkh\"auser, Boston, Mass.},
			date={1981},
			pages={xiii+268 pp. (2 plates)},
		}
		
		\bib{MEDV}{book}{
			author={Medvedev, F.A.},
			title={Scenes from the history of real functions},
			series={Science Networks. Historical Studies},
			volume={7},
			note={Translated from the Russian by Roger Cooke},
			publisher={Birkh\"auser Verlag, Basel},
			date={1991},
			pages={265},
			doi={10.1007/978-3-0348-8660-4},
		}
		
		\bib{Merrien}{article}{
   author={Merrien, Jean},
   title={Prolongateurs de fonctions diff\'erentiables d'une variable r\'eelle},
   language={French},
   journal={J. Math. Pures Appl. (9)},
   volume={45},
   date={1966},
   pages={291--309},
}

		\bib{Markus1958}{article}{
			author={Marcus, S.},
			title={Sur les fonctions continues qui ne sont monotones en acun intervalle},
			journal={Rev. Math. Pures Appl.},
			volume={3},
			date={1958},
			pages={101--105},
		}
		
		\bib{Mi}{article}{
			author={Miller, A.W.},
			title={Mapping a set of reals onto the reals},
			journal={J. Symbolic Logic},
			volume={48},
			date={1983},
			number={3},
			pages={575--584},
			doi={10.2307/2273449},
		}
		
		\bib{Mina1940}{article}{
			author={Minakshisundaram, S.},
			title={On the roots of a continuous non-differentiable function},
			journal={J. Indian Math. Soc. (N.S.)},
			volume={4},
			date={1940},
			pages={31--33},
		}
		
		\bib{Morayne85}{article}{
			author={Morayne, M.},
			title={On continuity of symmetric restrictions of Borel functions},
			journal={Proc. Amer. Math. Soc.},
			volume={93},
			date={1985},
			number={3},
			pages={440--442},
			doi={10.2307/2045609},
		}
		
		\bib{Munk}{book}{
			author={Munkres, J.R.},
			title={Topology},
			publisher={Prentice-Hall, 2nd ed.},
			date={2000},
		}
		
		\bib{NZ}{article}{
			author={Nekvinda, A.},
			author={Zaj\'\i \v cek, L.},
			title={Extensions of real and vector functions of one variable which preserve differentiability},
			journal={Real Anal. Exchange},
			volume={30},
			date={2004/05},
			number={2},
			pages={435--450},
		}
		
		\bib{Ol}{article}{
			author={Olevski\u \i , A.},
			title={Ulam-Zahorski problem on free interpolation by smooth functions},
			journal={Trans. Amer. Math. Soc.},
			volume={342},
			date={1994},
			number={2},
			pages={713--727},
			doi={10.2307/2154649},
		}
		
		\bib{Olsen}{article}{
			author={Olsen, L.},
			title={A new proof of Darboux's theorem},
			journal={Amer. Math. Monthly},
			volume={111},
			date={2004},
			number={8},
			pages={713--715},
		}
		
		\bib{Padm}{article}{
			author={Padmavally, K.},
			title={On the roots of equation $f(x)=\xi $ where $f(x)$ is real and continuous in $(a,b)$ but monotonic in no subinterval of $(a,b)$},
			journal={Proc. Amer. Math. Soc.},
			volume={4},
			date={1953},
			pages={839--841},
			doi={10.2307/2031814},
		}
		
			\bib{P1998}{article}{
		author={Paw\l ucki, Wies\l aw},
		title={Examples of functions $\scr C^k$-extendable for each $k$ finite,
			but not $\scr C^\infty$-extendable},
		conference={
			title={Singularities Symposium---\L ojasiewicz 70},
			address={Krak\'ow, 1996; Warsaw},
			date={1996},
		},
		book={
			series={Banach Center Publ.},
			volume={44},
			publisher={Polish Acad. Sci. Inst. Math., Warsaw},
		},
		date={1998},
		pages={183--187},
	} 

		\bib{PeLa}{article}{
			author={Petruska, G.},
			author={Laczkovich, M.},
			title={Baire $1$ functions, approximately continuous functions and derivatives},
			journal={Acta Math. Acad. Sci. Hungar.},
			volume={25},
			date={1974},
			pages={189--212},
			doi={10.1007/BF01901760},
		}
		
		\bib{Po}{article}{
			author={Pompeiu, D.},
			title={Sur les fonctions d\'eriv\'ees},
			language={French},
			journal={Math. Ann.},
			volume={63},
			date={1907},
			number={3},
			pages={326--332},
			doi={10.1007/BF01449201},
		}
		
		\bib{Riesz}{article}{
			author={Riesz, F.},
			title={Sur un Theoreme de Maximum de Mm. Hardy et Littlewood},
			journal={J. London Math. Soc.},
			volume={S1-7},
			number={1},
			pages={10},
			date={1932},
			doi={10.1112/jlms/s1-7.1.10},
		}
		
		\bib{Ros}{article}{
			author={Rosenthal, A.},
			title={On the continuity of functions of several variables},
			journal={Math. Z.},
			volume={63},
			date={1955},
			pages={31--38},
			doi={10.1007/BF01187922},
		}
		
		\bib{Rudin}{book}{
			author={Rudin, W.},
			title={Principles of mathematical analysis},
			publisher={McGraw-Hill Book Company, Inc., New York-Toronto-London},
			date={1953},
			pages={ix+227},
		}
		
		\bib{Sche}{article}{
   author={Scheeffer, Ludwig},
   title={Theorie der Maxima und Minima einer Function von zwei Variabeln},
   journal={Math. Ann.},
   volume={35},
   date={1890},
   pages={541--576},
}
		
		\bib{sierpinskiCH}{book}{
		author={Sierpi\'nski, W.},
		title={Hypoth\`ese du Continu},
		journal={Moniografie Matematyczne},
		volume={Tom IV},
		date={1934},
 		place={Warsaw},
		}
		
		\bib{St2}{article}{
			author={Stepr\=ans, J.},
			title={Decomposing Euclidean space with a small number of smooth sets},
			journal={Trans. Amer. Math. Soc.},
			volume={351},
			date={1999},
			number={4},
			pages={1461--1480},
		}
		
		\bib{Szuca}{article}{
			author={Szuca, P.},
			title={Loops of intervals and Darboux Baire 1 fixed point problem},
			journal={Real Anal. Exchange},
			volume={29},
			date={2003/04},
			number={1},
			pages={205--209},
			doi={10.14321/realanalexch.29.1.0205},
		}

\bib{Takagi}{article}{ 
   author={Takagi, Teiji},
   title={A simple example of the continuous function without derivative},
   journal={Proc. Phys. Math. Soc. Japan},
	volume={1},
			date={1903},
			pages={176--177},
   series={Springer Collected Works in Mathematics},
   note={Also in: The Collected Papers of Teiji Takagi, Springer-Verlag, New York 1990, 4--5.},
}
		
		
		\bib{Tao}{book}{
			author={Tao, T.},
			title={An introduction to measure theory},
			series={Graduate Studies in Mathematics},
			volume={126},
			publisher={American Mathematical Society, Providence, RI},
			date={2011},
			pages={xvi+206},
			isbn={978-0-8218-6919-2},
		}

		\bib{thim2003}{book}{
			author={Thim, J.}, 
			title={Continuous nowhere differentiable functions},
			note={Master Thesis},
			publisher={Lule\aa ~University of Technology},
			date={2003},
			pages={98},
		}
	
		\bib{TBB}{book}{
			author={Thomson, B.S.},
			author={Bruckner, J.B.},
			author={Bruckner, A.M.},
			title={Elementary Real Analysis. http://classicalrealanalysis.info/documents/TBB-AllChapters-Landscape.pdf},
			date={2008},
		}
		
		\bib{Ulam}{book}{
			author={Ulam, S.M.},
			title={A collection of mathematical problems},
			series={Interscience Tracts in Pure and Applied Mathematics, no. 8},
			publisher={Interscience Publishers, New York-London},
			date={1960},
			pages={xiii+150},
		}
		
		\bib{vW}{article}{
			author={van der Waerden, B.L.},
			title={Ein einfaches Beispiel einer nicht-differenzierbaren stetigen Funktion},
			language={German},
			journal={Math. Z.},
			volume={32},
			date={1930},
			number={1},
			pages={474--475},
			doi={10.1007/BF01194647},
		}
		
			\bib{velleman}{article}{
			author={Velleman, D.J.},
			title={Characterizing continuity},
			journal={Amer. Math. Monthly},
			volume={104},
			date={1997},
			number={no.~4},
			pages={318--322},
		}
		
		\bib{Vol}{article}{
			author={Volterra, V.},
			title={Sui principii del calcolo integrale},
			language={Italian},
			journal={Giorn. di Battaglini},
			volume={19},
			date={1881},
			pages={333--372},
			url={http://gdz.sub.uni-goettingen.de/pdfcache/PPN599415282_0019/PPN599415282_0019___LOG_0029.pdf},
		}
		
		\bib{Weil}{article}{
			author={Weil, C.E.},
			title={On nowhere monotone functions},
			journal={Proc. Amer. Math. Soc.},
			volume={56},
			date={1976},
			pages={388--389},
			doi={10.2307/2041644},
		}
		
		\bib{Wei}{book}{
			author={Weierstrass, K.},
			title={Abhandlungen aus der Funktionenlehre},
			publisher={Julius Springer, Berlin},
			date={1886},
		}
		
\bib{Edgar2004}{collection}{
	author={Weierstrass, K.},
	title={\"Uber continuirliche Funktionen eines reellen Arguments, die f\"ur keinen Werth des letzteren einen bestimmten Differentialquotienten besitzen [On continuous functions of a real argument that do not possess a well-defined derivative for any value of their argument]},		
   collection={Classics on fractals. Boston, MA.},
   editor={Edgar, Gerald A.},
   publisher={Addison-Wesley Publishing Company, pp. 3--9.},
   date={1993},
}


		\bib{White}{article}{
			author={White, D. J.},
			title={Functions preserving compactness and connectedness are continuous},
			journal={J. London Math. Soc.},
			volume={43},
			date={1968},
			pages={714--716},
			doi={10.1112/jlms/s1-43.1.714},
		}
	
		\bib{Wh}{article}{
			author={Whitney, H.},
			title={Analytic extensions of differentiable functions defined in closed sets},
			journal={Trans. Amer. Math. Soc.},
			volume={36},
			date={1934},
			number={1},
			pages={63--89},
			doi={10.2307/1989708},
		}
		
		\bib{whitney2}{article}{
	author={Whitney, H.},
	title={Differentiable functions defined in closed sets. I},
	journal={Trans. Amer. Math. Soc.},
	volume={36},
	date={1934},
	number={2},
	pages={369--387},
	doi={10.2307/1989844},
} 

\bib{Whitney1951}{article}{
	author={Whitney, H.},
   title={On totally differentiable and smooth functions},
   journal={Pacific J. Math.},
   volume={1},
   date={1951},
   pages={143--159},
   issn={0030-8730},
}

		\bib{Za}{article}{
			author={Zahorski, Z.},
			title={Sur l'ensemble des points singuliers d'une fonction d'une variable r\'eelle admettant les d\'eriv\'ees de tous les ordres},
			language={French},
			journal={Fund. Math.},
			volume={34},
			date={1947},
			pages={183--245},
		}
		
	\end{biblist}
\end{bibdiv}

\section*{Photographs source and citations}

The photographs appearing in this manuscript can be found in the following online locations:

\begin{itemize}
	\item Photograph \ref{pic_cauchy}: 
	\href{https://es.wikipedia.org/wiki/Augustin_Louis_Cauchy}{https://es.wikipedia.org/wiki/Augustin\_Louis\_Cauchy}
	
		\item Photograph \ref{pic_VD}: 
		\href{http://www.villinovolterra.it/en/la-storia/}{http://www.villinovolterra.it/en/la-storia/}
	 and \break \href{https://es.wikipedia.org/wiki/Jean_Gaston_Darboux}{https://es.wikipedia.org/wiki/Jean\_Gaston\_Darboux}
	\item Photograph \ref{pic_Baire}: \href{https://es.wikipedia.org/wiki/Rene-Louis_Baire}{https://es.wikipedia.org/wiki/Ren\'e-Louis\_Baire}
	
	\item Photograph \ref{pic_DenPom}:\\ 
	\href{https://upload.wikimedia.org/wikipedia/ru/1/16/Arnaud_Denjoy.jpeg}
	{\small{https://upload.wikimedia.org/wikipedia/ru/1/16/Arnaud\_Denjoy.jpeg}}
	 and \break 
	 \href{https://upload.wikimedia.org/wikipedia/ru/4/4c/Dimitrie-Pompeiu-small.jpg}
	 {\small{https://upload.wikimedia.org/wikipedia/ru/4/4c/Dimitrie-Pompeiu-small.jpg}}
	\item Photograph \ref{pic_BW}:\\ 
	\href{http://www.sil.si.edu/DigitalCollections/hst/scientific-identity/fullsize/SIL14-B5-07a.jpg} 
	{\scriptsize{http://www.sil.si.edu/DigitalCollections/hst/scientific-identity/fullsize/SIL14-B5-07a.jpg}}\\ 
	and \break 
	\href{https://es.wikipedia.org/wiki/Karl_Weierstrass}{https://es.wikipedia.org/wiki/Karl\_Weierstrass}
	
	 \item Photograph \ref{pic_LT}: \\ 
	 \href{https://es.wikipedia.org/wiki/Bartel_Leendert_van_der_Waerden} 
	 {https://es.wikipedia.org/wiki/Bartel\_Leendert\_van\_der\_Waerden} 
	 and \break 
	 \href{https://es.wikipedia.org/wiki/Teiji_Takagi}
	 {https://es.wikipedia.org/wiki/Teiji\_Takagi}
	 
	 \item Photograph \ref{pic_RW}:\\ 
	 \href{https://en.wikipedia.org/wiki/Frigyes_Riesz} 
	 {https://en.wikipedia.org/wiki/Frigyes\_Riesz} 
	 and \break 
	 \href{https://es.wikipedia.org/wiki/Hassler_Whitney}
	 {https://es.wikipedia.org/wiki/Hassler\_Whitney}
	 
	 \item Photograph \ref{pic_jarnik}:\\ 
	 \href{https://web.math.muni.cz/biografie/obrazky/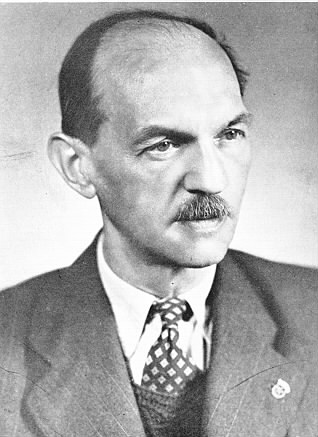} 
	 {https://web.math.muni.cz/biografie/obrazky/jarnik\_vojtech.jpg} 
	 
	 \item Photograph \ref{pic_UZ}: \\
	 \href{http://aprender-mat.info/historyDetail.htm?id=Ulam}
	 {http://aprender-mat.info/historyDetail.htm?id=Ulam}
	  and \break 
	  \href{http://ms.polsl.pl/trysektor/materialy/profZahorski.pdf}
	  {http://ms.polsl.pl/trysektor/materialy/profZahorski.pdf}

	 \item Photograph \ref{pic_ME}:  
	 \href{https://cms.math.ca/notes/v35/n3/Notesv35n3.pdf}
	 {https://cms.math.ca/notes/v35/n3/Notesv35n3.pdf}
	 
	 \item Photograph \ref{pic_NL}: 
	 \href{http://theor.jinr.ru/~kuzemsky/Luzinbio.html}
	 {http://theor.jinr.ru/\~{}kuzemsky/Luzinbio.html}

 \item Photograph \ref{pic_LR}:\\ 
\href
{https://www.goodreads.com/author/show/2403467.Henri_Lebesgue}
 {https://www.goodreads.com/author/show/2403467.Henri\_Lebesgue}
and\break 
\href{http://histmath-heidelberg.de/homo-heid/rosenthal.htm}
{http://histmath-heidelberg.de/homo-heid/rosenthal.htm}
 
  \item Photograph \ref{pic_ZF}: 
  \href{https://de.wikipedia.org/wiki/Ernst_Zermelo}
  {https://de.wikipedia.org/wiki/Ernst\_Zermelo}
  and\break 
  \href{https://de.wikipedia.org/wiki/Adolf_Abraham_Halevi_Fraenkel}
  {https://de.wikipedia.org/wiki/Adolf\_Abraham\_Halevi\_Fraenkel}

 \item Photograph \ref{pic_CGC}: 
\href {https://es.wikipedia.org/wiki/Georg\_Cantor} 
{https://es.wikipedia.org/wiki/Georg\_Cantor}, \\
\href{https://es.wikipedia.org/wiki/Kurt_Godel} 
{https://es.wikipedia.org/wiki/Kurt\_G\"odel}, and \\
\href{http://www-history.mcs.st-andrews.ac.uk/PictDisplay/Cohen.html}
{http://www-history.mcs.st-andrews.ac.uk/PictDisplay/Cohen.html}
 
 \item Photograph \ref{pic_Sier}:  
\href{https://en.wikipedia.org/wiki/Waclaw_Sierpinski}
{https://en.wikipedia.org/wiki/Waclaw\_Sierpinski}
 
\end{itemize}

\end{document}